\newif \ifHAL
\HALfalse

\documentclass[leqno]{siamart251104} 
\usepackage[T1]{fontenc}
\usepackage[utf8]{inputenc}
\usepackage[american]{babel}
\usepackage{color}
\usepackage{amssymb}
\usepackage{graphicx}
\usepackage{xspace}
\usepackage{bm,bbm}
\usepackage{centernot}
\usepackage{siunitx}
\usepackage{booktabs}
\newcommand{\mynum}[2]{{\qty[scientific-notation=false, round-mode=figures,round-precision = 5, drop-zero-decimal, round-pad = false]{#1}{#2}}}
\newcommand{\numm}[1]{{\qty[scientific-notation=true, round-mode=figures,round-precision = 3, drop-zero-decimal, round-pad = false]{#1}{}}}

\newcommand{\JLGcode}{\texttt{code1}\@\xspace}
\newcommand{\ETcode}{\texttt{code2}\@\xspace}

\usepackage[numbers,sort]{natbib}
\let\cite=\citet
\usepackage{enumerate,array}
\usepackage{dsfont}
\usepackage{multirow}
\usepackage{enumitem}
\setlist{leftmargin=20pt}
\usepackage{algorithm}
\usepackage{algpseudocodex}
\usepackage{Macros/mydef}

\usepackage{graphics}
\usepackage{wrapfig}
\usepackage{float}
\usepackage{subfig}
\usepackage[percent]{overpic}
\usepackage{varwidth}
\usepackage{tikz}
\usepackage{pgfplots}
\usepackage{adjustbox}
\usetikzlibrary{arrows,matrix,positioning,fit}
\usetikzlibrary{shapes,positioning}
\usetikzlibrary{backgrounds}
\usepackage{tikz-layers}
\usepgfplotslibrary{fillbetween}
\usetikzlibrary{intersections}
\pgfplotsset{compat=1.14}
\usepgfplotslibrary{colorbrewer}
\usepgfplotslibrary{patchplots}
\usepgfplotslibrary[colorbrewer]
\usetikzlibrary{pgfplots.colorbrewer}
\usetikzlibrary[pgfplots.colorbrewer]
\usepgfplotslibrary{units}
\usetikzlibrary{spy}
\usepackage{pgfplotstable}
\usepackage{arrayjobx}
\graphicspath{ {./FIGS/} }
\usetikzlibrary{external}

\colorlet{PlotColor1}{Spectral-A}
\colorlet{PlotColor3}{Spectral-L}
\colorlet{PlotColor2}{Spectral-O}
\colorlet{PlotColor4}{Spectral-D}
\colorlet{PlotColor5}{black}

\begin{document}
\newcommand\footnotemarkfromtitle[1]{%
  \renewcommand{\thefootnote}{\fnsymbol{footnote}}%
  \footnotemark[#1]%
  \renewcommand{\thefootnote}{\arabic{footnote}}}

\title{Invariant-domain preserving IMEX schemes for the nonequilibrium Gray Radiation-Hydrodynamics equations Part I\footnotemark[1]}

\author{Jean-Luc~Guermond\footnotemark[2]
  \and Eric J.~Tovar\footnotemark[3] \footnotemark[4]}

\date{Draft version \today}

\maketitle

\renewcommand{\thefootnote}{\fnsymbol{footnote}}

\footnotetext[2]{Department of Mathematics, Texas A\&M University 3368 TAMU, College
  Station, TX 77843, USA.}%

\footnotetext[3]{Theoretical Division, Los Alamos National Laboratory, P.O. Box 1663, Los Alamos, NM, 87545, USA.}

\footnotetext[4]{Xcimer Energy Corporation, 10325 E 47th Ave, Denver, CO, 80238, USA.}

\footnotetext[1]{This material is based upon work supported in part by
  the National Science Foundation grant DMS2110868, the Air Force
  Office of Scientific Research, USAF, under grant/contract number
  FA9550-23-1-0007, and the U.S. Department of Energy by Lawrence
  Livermore National Laboratory under Contracts B640889. ET
  acknowledges the former support from the U.S. Department of Energy's
  Office of Applied Scientific Computing Research (ASCR) through the
  Competitive Portfolios program at Los Alamos National Laboratory
  (LANL). LANL is operated by Triad National Security, LLC, for the
  National Nuclear Security Administration of the U.S. Department of
  Energy (Contract No. 89233218CNA000001). 
  The LANL release number is LA-UR-26-20761.
  The support of Xcimer Energy Corporation is also acknowledged.}
\renewcommand{\thefootnote}{\arabic{footnote}}
\begin{abstract}
  In this work we introduce an implicit-explicit invariant-domain
  preserving approximation of the nonequilibrium gray
  radiation-hydrodynamics equations.  A time and space approximation
  of the system is proposed using a novel split of the equations
  composed of three elementary subsystems, two hyperbolic and one
  parabolic.  The approximation thus realized is proved to be
  consistent, conservative, invariant-domain preserving, and
  first-order accurate. The proposed method is a stepping stone for
  achieving higher-order accuracy in space and time in the forthcoming
  second part of this work. The method is numerically illustrated and
  shown to converge as advertised. This paper is dedicated to the memory
  of Peter Lax.
\end{abstract}

\begin{keywords}
  Radiation hydrodynamics, nonequilibrium gray diffusion, invariant domain preserving, IMEX, Euler equations
\end{keywords}

\begin{AMS}
  35L65, 65M60, 65M12, 65N30
\end{AMS}

\pagestyle{myheadings} \thispagestyle{plain}
\markboth{J.-L. Guermond and E.~J. Tovar}{IDP IMEX methods for gray radiation hydrodynamics}


\section{Introduction}
The objective of this work is to introduce a first-order approximation
of the nonequilibrium gray radiation hydrodynamics (GRH) equations
that is invariant-domain preserving (IDP), consistent, and
conservative.  The GRH equations play an important role in modeling
the diffusion of thermal radiation in fluids as well as the induced
effects of radiation on the fluid motion. The model is typically used
for applications in optically thick environments (\ie~strong coupling
between radiation and fluid motion) such as inertial confinement
fusion and astrophysics (see:~\cite{baldwin1999iterative}).  The GRH
equations is a system composed of the compressible Euler equations
coupled to a parabolic equation for the radiation energy density where
both sub-systems are supplemented with stiff source terms.  We refer
the reader to~\cite[Sec.~97]{mihalas2013foundations} for a general
overview of the model and~\cite{buet2004asymptotic} where a formal
derivation of the model is presented.

Due to the disparate temporal scales of the radiation and fluid motion, developing 
robust and accurate approximation techniques for the model is challenging.
It is known that applying an explicit time-integration method to the model 
leads to a restrictive time-step, $\Delta t \sim \frac{\sigma_t\times(\Delta x)^2}{c}$, 
where $c$ is the speed of light, $\sigma_t$ is the total absorption opacity, 
and $\Delta x$ is the spatial mesh size (see:~\cite{baldwin1999iterative}).
Alternatively, one could apply implicit time-integration to the full system but this could be 
computationally burdensome. 
A natural approach for overcoming these challenges would be to  
apply operator splitting to the system and use an IMEX time-integration method. 
Some recent approaches in the literature following this idea can be seen
in~\cite{BHEML:17} and \cite{southworth2024implicit}.
An operator split method was applied to the equilibrium diffusion model 
in~\cite{dai1998numerical}.
Another challenge in developing robust numerical methods for the GRH model 
is the stiff non-linearity that arises in the radiation diffusion sub-system 
due to potentially highly contrasted opacities (which depend on density and temperature). 
Efforts in the literature addressing this issue can be seen in~\cite{baldwin1999iterative} 
and~\cite{knoll_rider_Olson_1999} for the radiation diffusion sub-system.
For papers regarding only the radiation diffusion sub-system with a focus on 
time integration techniques, we refer the reader to~\cite{knoll_chacon_margolin_Mousseau_JCP_2003} and~\cite{zheng2024high}. 
For papers regarding positivity of the solution for the radiation diffusion sub-system
and positivity see~\cite{Buet_Despres_JCP_2006} and~\cite{sheng2009monotone}.
Recent work on developing higher-order approximation techniques for the full GRH model can be seen
in~\cite{delchini2015entropy} and~\cite{Delchini_Ragusa_Ferguson_IJNMF_2017}.
Since the nonequilibrium GRH diffusion model is a simplified 
radiation-hydrodynamics model, we refer the reader to~\citet{BHEML:17} and~\cite{He_Wibking_Krumholz_Mark_2024}
for higher-order IMEX schemes applied to generalized radiation-hydrodynamic
models. The literature addressing positivity preservation, or 
more generally, invariant-domain preservation is sparse. 
The purpose of the present paper is to address this issue. 

The contribution of this work is as follows. We introduce a novel
split of the GRH model into two elementary hyperbolic systems and one
parabolic system.  The motivation for introducing two hyperbolic
stages is rooted in the observation that the radiation pressure does
not influence the material internal energy; see \eg Lemma~2.4 in~\cite{Dao_Nazarov_Tomas_JCP_2024}.  
To the authors' best knowledge,
this split seems to be original.  Furthermore,
using~\citep{guermond_popov_sinum_2016}, this split allows for a
straight-forward spatial approximation that is invariant-domain
preserving for each hyperbolic system. We also derive the maximum wave
speed in the local Riemann problem for the second hyperbolic system to
guarantee the IDP property.  Then, we introduce a simple approximation
to the parabolic sub-system using backward Euler time stepping.
This approximation utilizes a fixed-point Picard iteration method and
a local Newton solve for updating the radiation energy density and
material temperature. The fixed-point technique is used to ensure
robustness of the algorithm at high Mach numbers. Finally,
irrespective of the relative tolerance that is used to exit the
fixed-point loop, the approximation is shown to be invariant-domain
preserving under mild assumptions on the equation of state and the
underlying spatial approximation.

The paper is organized as follows. In Section~\ref{sec:the_model} we
recall the full nonequilibrium gray radiation hydrodynamics (GRH) model
and discuss its properties.  We also give a brief background on the
thermodynamics and invariant domain of the system.  In
Section~\ref{sec:novel_split}, we introduce a novel split of the GRH
model which is composed of two hyperbolic subsystems and one parabolic
subsystem. Then, in Section~\ref{Sec:Space_approximation} we discuss
details regarding the spatial approximation and list
some structural assumptions that are invoked later.  Section~\ref{sec:full_scheme} is dedicated to
the approximation technique of the full system.  We focus on the
approximation of the two hyperbolic problems in
Sections~\ref{Sec:approx_stage_1} and~\ref{sec:approx_stage_2}.  We
give a brief discussion on multiplicative \vs additive splitting in
Section~\ref{sec:splitting}.  The main results of these sections are
Lemmas~\ref{lem:IDP_stage_1} and~\ref{lem:IDP_stage_2}.  We discuss
the approximation to the parabolic stage in
Section~\ref{sec:approx_stage_3}.  We detail the fixed-point Picard
iteration and the Newton method in this section.  The main result of
this section is Lemma~\ref{lem:IDP_stage_3} and the main result of the
paper is stated in Theorem~\ref{Thm:low_order_IDP}.  Finally, we conclude
by numerically illustrating the proposed approximation technique.

\section{The model}\label{sec:the_model}
In this section, we introduce the model for nonequilibrium gray radiation hydrodynamics.
We then give a brief discussion on the respective thermodynamics and invariant domain properties.

\subsection{Governing equations}
Let $D$ be a domain in $\polR^d$ where $d = \{1,2,3\}$.  Assume that
the domain is occupied by a radiating fluid that is optically thick.
That is to say, the gradient of the radiation energy density varies
slowly over the photon mean-free path (\cite{bates2001consistent}).
This assumption implies that the fluid motion and radiation are
strongly coupled.  We assume that the opacity of the material is
independent of the frequency of photons in the radiation, \ie the
fluid is a ``gray'' material (\cite{lowrie1999}).  We further assume
that the fluid and radiation fields are not in thermodynamic
equilibrium.  The nonequilibrium gray radiation hydrodynamics
diffusion model corresponding to this situation is written as follows:
\begin{subequations}\label{eq:radiation}\begin{align}
     & \partial_t\rho + \DIV(\bv\rho) = 0,                                      \\
     & \partial_t\bbm + \DIV(\bv\otimes\bbm + p(\bu)\polI_d) +\GRAD p\lorr(\bu)
    = \bzero, \label{mt:eq:radiation}                                           \\
     & \partial_t\Emech + \DIV(\bv(\Emech+p(\bu))) +\bv\SCAL\GRAD p\lorr(\bu)
    = -\sigmaa c(a\lorr \Tmech(\bu)^4-\ER), \label{EM:eq:radiation}             \\
     & \partial_t\ER+ \DIV(\bv\ER) + p\lorr(\bu)\DIV\bv
    - \DIV(\tfrac{c}{3\sigmat}\GRAD\ER) = \sigmaa c(a\lorr \Tmech(\bu)^4-\ER). \label{ER:eq:radiation}
  \end{align}\end{subequations}
Here, the (column) vector of the conserved variables has $d+3$
components $\bu\eqq (\rho,\bbm\tr,\Emech,\ER)\tr$, where $\rho$ is
the density, $\bbm$ is the momentum (viewed as a column vector in
$\Real^d)$, $\Emech$ is the total mechanical energy, and $\ER$ is the
radiation energy (per unit volume).  We define the velocity vector by
$\bv\eqq \rho^{-1}\bbm$.  Here $p\lorr(\bu)$ is the radiation
pressure, $c$ the speed of light, $\sigmaa$ and $\sigmat$ the
absorption and total cross sections (both scale as the inverse of a
length), respectively, $a\lorr\eqq \frac{4\sigma}{c}$ the radiation
constant, $\sigma$ the Stefan--Boltzmann constant, and $p(\bu)$,
$\Tmech(\bu)$ are the mechanical pressure and temperature,
respectively.  More details regarding $p(\bu)$ and $T(\bu)$ are given
in \S\ref{Sec:thermodynamics}. We also define the internal energy
$\varepsilon(\bu) \eqq\Emech -\frac12 \rho\|\bv\|_{\ell^2}^2$, the
specific internal energy $e(\bu)\eqq \rho^{-1}\varepsilon(\bu) \eqq
\rho^{-1}\Emech - \frac12 \|\bv\|_{\ell^2}^2$, and the total energy of
the system by $\Etot\eqq \Emech+\ER$.  For the rest of the paper, we
assume that the radiation pressure is defined by
$p\lorr(\bu)\eqq\frac13\ER$.  We refer the reader to
\S\ref{sec:numerical_prelim} for a discussion on the units used in
this work.

\begin{remark}[Total energy]
  Adding~\eqref{EM:eq:radiation} and~\eqref{ER:eq:radiation}, we
  observe that the total energy of the system satisfies the balance
  equation:
  \begin{equation}
    \partial_t\Etot + \DIV(\bv(\Etot+p(\bu) +p\lorr(\bu))) -
    \DIV(\tfrac{c}{3\sigmat}\GRAD\ER) = 0,
  \end{equation}
  which shows that, in absence of energy source fluxes at the boundary
  of the domain $\Dom$, the total energy is conserved, as expected.
  The three conserved
  variables of the system are the density, $\rho$, the momentum,
  $\bbm$, and the total energy, $\Etot$. The radiation energy is not a
  conserved quantity.
\end{remark}

\begin{remark}[Internal energy]\label{Rem:internal_energy} Taking the
  dot product of \eqref{mt:eq:radiation} with $\bv$ and subtracting
  the result from \eqref{EM:eq:radiation} gives
  the balance equation for the internal energy:
  \begin{equation}
    \partial_t(\rho e(\bu)) + \DIV(\bv\rho e(\bu)) + p(\bu) \DIV \bv=
    -\sigmaa c(a\lorr \Tmech(\bu)^4-\ER),
  \end{equation}
  which shows that the gradient of the radiation pressure has no
  effect whatsoever on the internal energy. We refer the reader to
  Lemma~2.4 in \cite{Dao_Nazarov_Tomas_JCP_2024} where a more general
  statement regarding this property is made. 
\end{remark}

\begin{remark}[Nonconservative products]
  Notice that~\eqref{eq:radiation} has non-conservative
  products $\bv\SCAL\GRAD p\lorr(\bu)$ and $p\lorr(\bu)\DIV\bv$.
  However, due to the presence of the diffusive term $\DIV(\tfrac{c}{3\sigmat}\GRAD\ER)$
  in~\eqref{ER:eq:radiation}, it is reasonable to expect that no discontinuity appears
  in the variable $\ER$ so that these non-conservative
  product are unambiguously defined. The reader is also referred to
  \cite[\S2.4]{Buet_Despres_JCP_2006} where the non-conservative product question is discussed.
\end{remark}

\begin{remark}[Thermodynamic nonequilibrium]
  In the literature, the term \textit{equilibrium} refers to the case
  when the radiation energy density and material temperature satisfy
  the condition $\ER = a\lorr \Tmech(\bu)^4$ for all $\bx\in \Dom$ and
  all $t > 0$.  For a thorough discussion of the equilibrium-diffusion
  limit of the radiation hydrodynamics equations, we refer the reader
  to~\cite{dai1998numerical} and~\cite{ferguson2017equilibrium} and references therein.
\end{remark}

\subsection{Thermodynamics and invariant domain}\label{Sec:thermodynamics}

Throughout the paper, we assume that given a reasonable state $\bu$,
we are able to retrieve the mechanical pressure $p(\bu)$ and
temperature $\Tmech(\bu)$ in a suitable way (\eg by evaluating
arbitrary analytic expressions or by deriving values from tabulated
experimental data).  We call the equation of state that relates these
thermodynamic quantities, the oracle. We assume that we have no a
priori knowledge of the oracle apart from some mild structural
assumptions that we now state. As in
\cite{Clayton_Guermond_Popov_SIAM_SISC_2022}, we assume that the
domain of definition for the thermodynamic quantities is the set
$\calB(b)\subset\Real^{d+3}$ given by
\begin{align}\label{def_of_calB}
  \calB(b)\eqq \big\{\bu \eqq(\rho,\bbm\tr, \Emech,\ER)\tr \in \mathbb{R}^{d+3}
  \st 0<\rho, \ 0 < 1 - b \rho, \; e\lo{cold}(\rho)< e(\bu) \big\}.
\end{align}
The inequality $1 - b \rho > 0$ appearing in the definition of
$\calB(b)$ is the so called maximum compressibility condition. The
constant $b$ can be set to zero if the user has no a priori knowledge
about the maximum compressibility of the fluid under
consideration. The function $e\lo{cold}: \Real_{>0}\to \Real$ is the
cold curve. We henceforth assume that $e\lo{cold}:(0,\frac1b)\to \Real$ is quasiconcave. For the sake of simplicity, we assume that the oracle
returns a pressure and a non-negative temperature:
\begin{align}
  p: \calB(b)\ni \bu & \longmapsto p(\bu)\in \Real,\label{Pressure_is_nonnegative}\\
  \label{def_of_temperature}
  \Tmech: \calB(b)\ni \bu&\longmapsto \Tmech(\bu)\in \Real_{\ge 0}.
\end{align}
The temperature being positive is one of the fundamental principles of
thermodynamics.  Recall that using the definition of the heat capacity
at constant volume $\frak{c}\lo{v}(\rho,T) \eqq \frac{\partial
  e}{\partial T}(\rho,T)>0$, we have $e(\rho,T) = \int_0^T
\frak{c}\lo{v}(\rho,\theta) \diff\theta + e\lo{cold}(\rho)$.  Then, again
for the sake of simplicity, we assume that the oracle gives us access
to the cold curve $e\lo{cold}(\rho)$ and the average heat capacity at
constant volume $\cv(\rho,T) \eqq \frac{1}{T}\int_0^T
\frak{c}\lo{v}(\rho,\theta) \diff\theta$.  We summarize this assumption by
saying that the internal energy and the temperature are related as
follows:
\begin{align}
  e(\bu) = \cv(\rho,\Tmech(\bu)) \Tmech(\bu) + e\lo{cold}(\rho),\qquad \forall \bu\in\calB(b).
  \label{specific_heat_capacity}
\end{align}
We note that the above assumptions can be weakened.  The reader is
referred to \cite{Clayton_Tovar_2025} for more details on how to
weaken these assumptions, but these generalizations are
out of the scope of the paper.

Regarding the radiation quantities, we assume that the absorption and
total cross sections can depend on the oracle; that is to say, we
assume the following properties for  the absorption and
total cross sections
$\sigmaa: \calB(b)\ni \bu \longmapsto \sigmaa(\rho,
  \Tmech(\bu))\in \Real_{\ge0}$ and
$\sigmat: \calB(b)\ni \bu \longmapsto \sigmat(\rho, \Tmech(\bu))\in \Real_{>0}$.

Since it can be shown that the following set
\begin{multline} \label{eq:def_calA_rad}
  \calA(b) \eqq \{ \bu= (\rho,\bbm\tr,\Emech,\ER)\tr \in \Real^{d+3} \st
  0<\rho,\; 0< 1-b\rho,\; \\ e\lo{cold}(\rho)<e(\bu),\; 0<\ER\}
\end{multline}
is invariant under parabolic regularization, we focus our interest
only on those weak solutions of \eqref{eq:radiation} for which
$\calA(b)$ is invariant as well.  Our ultimate objective is to construct an
approximation of~\eqref{eq:radiation} that is high-order accurate in
space and time and leaves $\calA(b)$ invariant. We propose to do so by
first constructing a firs-order method that is IDP, then constructing
a higher-order method that is made IDP by limiting. The objective of
the present paper is to solely focus on the first-order IDP method.

\section{Three elementary subsystems}\label{sec:novel_split}
In this section, we introduce the novel split of the
model~\eqref{eq:radiation}. This split consists of three elementary
subsystems (two hyperbolic and one parabolic). The purpose of this
section is simply to give the reader some heuristics justifying the
decomposition of the system that is used in \S\ref{sec:full_scheme} to perform
the approximation in time.

\subsection{IMEX split (hyperbolic-parabolic)}
The approximation in time of \eqref{eq:radiation} is done by means of an implicit-explicit (IMEX)
time stepping technique.  The explicit time stepping is applied
to the non-stiff part of the problem:
\begin{subequations}\label{Emech_Er:radiation_full}%
  \begin{align}%
     & \partial_t\rho + \DIV(\bv\rho) = 0,                                         \\
     & \partial_t\bbm + \DIV(\bv\otimes\bbm + p(\bu)\polI_d) = -\GRAD p\lorr(\bu), \\
     & \partial_t\Emech + \DIV(\bv(\Emech+p(\bu))) =  -\bv\SCAL\GRAD p\lorr(\bu),  \\
     & \partial_t\ER+ \DIV(\bv \ER) = -p\lorr(\bu)\DIV\bv,
  \end{align}
\end{subequations}
whereas the implicit time stepping is applied of the stiff part:
\begin{subequations}\label{parab}
  \begin{align}
     & \partial_t\rho = 0,                                                                     \\
     & \partial_t\bbm = \bzero,                                                                \\
     & \partial_t\Emech = -\sigmaa c(a\lorr \Tmech(\bu)^4-\ER),                                \\
     & \partial_t\ER- \DIV(\tfrac{c}{3\sigmat}\GRAD\ER) = \sigmaa c(a\lorr \Tmech(\bu)^4-\ER).
  \end{align}
\end{subequations}
We henceforth refer to \eqref{Emech_Er:radiation_full} as the hyperbolic sub-problem (or stage) and we refer to \eqref{parab} as the parabolic sub-problem (or stage).

To be able to construct an approximation method that is consistent,
conservative, and leaves $\calA(b)$ invariant, we further
decompose~\eqref{Emech_Er:radiation_full}.  The decomposition is based
on the observation that the radiation pressure does not have any
influence on the internal energy, as stated in
Remark~\ref{Rem:internal_energy}. We split~\eqref{Emech_Er:radiation_full} into two stages to account for this
fundamental principle.

\subsubsection{Hyperbolic stage 1} In the first stage of the solution
process for \eqref{Emech_Er:radiation_full}, which we henceforth call
hyperbolic stage 1, we just account for the influence of the
mechanical pressure $p(\bu)$ in the time evolution of the momentum and
mechanical total energy. The effect of the radiation pressure is
accounted for in the second stage.  The first stage is formulated as
follows: Given some initial data
$(\rho_0,\bbm_0\tr,{\Emech}_0,{\ER}_0)\tr$, find
$\bu\eqq (\rho,\bbm\tr,\Emech,\ER)\tr$ \sth
\begin{equation}\label{hyperbolic_one}
  \partial_t\bu + \DIV\polg(\bu) = \bm{0},\quad \text{with}\quad
  \polg(\bu)\eqq\begin{pmatrix}\bv\rho                        \\
    \bv\otimes\bbm + p(\bu)\polI_d \\
    \bv(\Emech+p(\bu))             \\
    \bv\ER
  \end{pmatrix}.
\end{equation}
This system is just the compressible Euler equation augmented with an additional
linear conservation equation for the radiation energy. Approximating
this problem in time and space is a standard exercise explained in \S\ref{Sec:approx_stage_1}.

\subsubsection{Hyperbolic stage 2} The second stage, which we
henceforth call hyperbolic stage 2, reads as follows:
Given some initial data $(\rho_0,\bbm_0\tr,{\Emech}_0,{\ER}_0)\tr$,
seek $(\rho,\bbm\tr,\Emech,\ER)\tr$ so that
\begin{subequations}\label{Emech_Er:energy_exchange}\begin{align}
     & \partial_t\rho = 0,                                                           \\
     & \partial_t\bbm =-\GRAD p\lorr(\ER), \label{mt:Emech_Er:energy_exchange}       \\
     & \partial_t\Emech = -\bv\GRAD p\lorr(\ER), \label{Em:Emech_Er:energy_exchange} \\
     & \partial_t\ER =- p\lorr(\ER)\DIV\bv.\label{Er:Emech_Er:energy_exchange}
  \end{align}\end{subequations}
It turns out that the system \eqref{Emech_Er:energy_exchange} can be
further simplified and be put in conservative form. Notice that
\eqref{mt:Emech_Er:energy_exchange} combined with
\eqref{Em:Emech_Er:energy_exchange} implies that the internal energy,
$\varepsilon(\bu) \eqq \rho e(\bu)\eqq \Emech -\frac12 \rho\|\bv\|_{\ell^2}^2$, is constant in
time (in agreement with the statement already made in
Remark~\ref{Rem:internal_energy}, see
\citep[Lem~2.4]{Dao_Nazarov_Tomas_JCP_2024}). Likewise, combining
\eqref{mt:Emech_Er:energy_exchange} and
\eqref{Er:Emech_Er:energy_exchange}, we obtain a conservation equation
for the quantity $E\lo{t}\eqq \ER+\frac12 \rho\|\bv\|_{\ell^2}^2$ (not to be confused with $E\lo{tot}$).  In
conclusion, \eqref{Emech_Er:energy_exchange} can be rewritten in the
following equivalent form:
\begin{subequations}\label{Emech_Er:energy_exchange_reformulated}\begin{align}
     & \partial_t\rho = 0,                                                           \\
     & \partial_t\bbm =-\GRAD p\lorr(\ER), \label{mt:Emech_Er:energy_exchange_reformulated}       \\
     & \partial_tE\lo{t} = -\DIV(\bv p\lorr(\ER)), \label{Et:Emech_Er:energy_exchange_reformulated} \\
     & \partial_t\varepsilon(\bu) =0.\label{epsilon:Emech_Er:energy_exchange_reformulated}
  \end{align}\end{subequations}
Hence, given the initial data
$(\rho_0,\bbm_0\tr,{\Emech}_0,{\ER}_0)\tr$, setting
$E\lo{t0} \eqq {\ER}_0 +\frac12 \rho_0\|\bv_0\|_{\ell^2}^2$, seek
we $\bw\eqq (\rho,\bbm\tr,E\lo{t})\tr$ such that
\begin{equation}\label{hyperbolic_two}
  \partial_t\bw
  +\DIV\polk(\bu) = \bm{0},\quad
  \polk(\bw)\eqq\begin{pmatrix}0 \\
    p_\sfr(\bw)\polI_d \\
    \bv p_\sfr(\bw)
  \end{pmatrix},\qquad
  p_{\sfr}(\bw)\eqq
  \tfrac13( E\lo{t}-\tfrac{1}{2}\rho\|\bv\|_{\ell^2}^2),
\end{equation}
and  $\partial_t\varepsilon(\bu) =0.$
After solving~\eqref{hyperbolic_two} and setting
$\varepsilon_0\eqq {\Emech}_{0} -\frac12 \rho_0\|\bv_0\|_{\ell^2}^2$, the full state
$(\rho,\bbm\tr,{\Emech},{\ER})\tr$ is recovered by setting
\begin{equation}
  \ER\eqq E\lo{t}-\tfrac12 \rho\|\bv\|_{\ell^2}^2,\qquad \Emech \eqq
  \varepsilon_0 + \tfrac12 \rho\|\bv\|_{\ell^2}^2 = {\Emech}_{0}
  -\tfrac12 \rho_0\|\bv_0\|_{\ell^2}^2+ \tfrac12
  \rho\|\bv\|_{\ell^2}^2. \label{hyperbolic_two_update_ER_and_Emech}
\end{equation}

\subsubsection{Parabolic stage}\label{Sec:Parabolic_stage}
Finally, the third stage of the decomposition of the
system~\eqref{eq:radiation} consists of rewriting the parabolic stage
\eqref{parab} as follows: Given some initial data
$(\rho_0,\bbm_0\tr,{\Emech}_0,{\ER}_0)\tr$, set $\bv_0 \eqq \bbm_0 /
\rho_0$, $e_0 \eqq\rho_0^{-1}({\Emech}_0-\frac12
\|\bv_0\|_{\ell^2}^2)$, and $T_0\eqq T(\rho_0,e_0)$, then seek
$(\rho,\bv\tr,T,\ER)\tr$ such that
\begin{subequations}\label{parabolic}
  \begin{align}
     & \partial_t\rho = 0,                                                                                     \\
     & \partial_t\bv = \bzero,                                                                                 \\
     & \rho \partial_t (\cv T)  = -\sigmaa c(a\lorr \Tmech^4-\ER),\label{temp::parabolic}                         \\
     & \partial_t\ER- \DIV(\tfrac{c}{3\sigmat}\GRAD\ER) = \sigmaa c(a\lorr \Tmech^4-\ER), \label{ER:parabolic}
  \end{align}
\end{subequations}
where we used the fact that $\partial_t e=\cv\partial_t T$ in~\eqref{temp::parabolic}.
We recover the internal energy after solving
\eqref{parabolic} by setting $e=\cv T$, which finally give
$\Emech \eqq \rho e + \frac12 \rho\|\bv\|_{\ell^2}^2$.

The three elementary problems~\eqref{hyperbolic_one},~\eqref{hyperbolic_two},
and~\eqref{parabolic} are going to play important roles when we prove that our, yet to be
constructed, low-order method is conservative and invariant-domain
preserving.

\section{Approximation details}\label{Sec:Space_approximation}

Although the numerical tests reported in the paper are done with
continuous finite elements, most of what is said herein is
independent of the spatial discretization. Up to unessential
adaptations, all the theoretical results established below hold for finite
differences, finite volumes, continuous and discontinuous finite
elements.

\begin{assumption}\label{Ass:assumption_on_space_discretization}
  To make the presentation of the method discretization
  agnostic, we make the following assumptions \eqref{ass1}--\eqref{ass5}:

  \manuallabel{ass1}{i} The space approximation of any state functions
  $\bu:\Dom \to \Real^{d+3}$ is entirely defined by a finite
  collection of states $\bsfu\eqq\{\bsfu_i\}_{i\in\calV}$, where the
  coefficients $\bsfu_i$ (henceforth called degrees of freedom) are
  $\Real^{d+3}$-valued, $\calV \eqq \intset{1}{I}$ is the index set
  enumerating the degrees of freedom, and we have set
  $I\eqq\text{card}(\calV)$. We also assume that $\calV$ is
  partitioned into interior degrees of freedom, $\calV\upint$,
  and boundary degrees of freedom $\calV\upbnd$, \ie
  $\calV\eqq \calV\upint\cup\calV\upbnd$ and $\calV\upint\cap\calV\upbnd=\emptyset$.
  For instance, if the
  approximation is done with finite elements using global shape
  function $\{\varphi_i\}_{i\in\calV}$ and
  $\bu_h=\sum_{i\in\calV}\bu_i \varphi_i$ is the approximation of some
  function $\bu$, then the only information that is relevant to us
  regarding the approximate function $\bu_h$ is the collection
  $\{\bu_i\}_{i\in\calV}$. Interior degrees of freedom for Lagrange elements
  are such that $\varphi_{i|\front}=0$ for all $i\in\calV\upint$.

  \manuallabel{ass2}{ii} For every $i\in\calV$, there exists a subset
  $\calV(i)\subsetneq\calV$ that collects the local degrees of freedom
  that interact with $i$, which we call stencil at $i$. We assume that
  $j\in\calV(j)$ iff $i\in \calV(i)$. We denote $\calV^{*}(i)\eqq
    \calV(i){\setminus}\{i\}$.

  \manuallabel{ass3}{iii} The underlying spatial discretization
  provides two $I\CROSS I$ real-valued matrices $\polM\upL$ and
  $\polM\upH$ with the following properties.
  $\polM\upL$ is invertible, diagonal, and is called low-order mass
  matrix. The entries of this matrix are denoted
  $\polM_{ij}\upL = m_i\delta_{ij}$ where $m_i$ is called the mass
  associated with the $i$-th degree of freedom.
  $\polM\upH$ is invertible, symmetric, and is called high-order mass
  matrix. The entries of this matrix are denoted $\polM_{ij}\upH =
    m_{ij}$ and are assumed to be such that $m_{ij}=0$ if
  $j\not\in\calV(i)$, \ie $(\polM\upH\sfX)_i = \sum_{j\in\calV(i)}
    m_{ij}\sfX_j$ for all $\sfX\in\polR^{I}$. The two matrices
  $\polM\upL$ and $\polM\upH$ are used to approximate the identity
  operator.  We assume that
  \begin{equation}
    0< m_i\quad \forall i\in\calV, \qquad  m_i = \sum_{j\in\calV(i)} m_{ji}\quad \forall i\in\calV,\label{ass:on_mass}
  \end{equation}
  to guarantee that $\polM\upL$ and
  $\polM\upH$ carry the same mass. This implies that
  $\sum_{i\in\calV} m_i \sfu_i^n = \sum_{i\in\calV}\sum_{j\in\calV(i)}
    m_{ij}\sfu_j^n$ for all $\sfu\in (\Real^{d+3})^I$, and
  $\sum_{i\in\calV} m_i \bsfu_i^n = \sum_{i\in\calV}\sum_{j\in\calV(i)}
    m_{ij}\bsfu_j^n$ for all $\bsfu\in (\Real^{d+3})^I$. For instance, assuming
  that the approximation is done with continuous finite elements with
  global shape functions $\{\varphi_i\}_{i\in\calV}$, then
  $m_i\eqq\int_\Dom\varphi_i(\bx)\diff x$ and $m_{ij}\eqq\int_\Dom\varphi_i(\bx)\varphi_j(\bx)\diff x$.
  Letting
  $z_h=\sum_{i\in\calV}\sfz_i\varphi_i$ be the approximation of some
  smooth function $z:\Dom \to \Real$, we observe that
  $\int_\Dom \varphi_i(\bx) z(\bx) \diff x\approx \int_\Dom
    \varphi_i(\bx) z_h(\bx) \diff x =
    \sum_{j\in\calV(i)}m_{ij}\sfz_j \approx \sum_{j\in\calV(i)}m_{i}\sfz_i$.

  \manuallabel{ass4}{iv} The underlying spatial discretization provides
  a $I\CROSS I$, $\Real^d$-valued matrix  $\polC$ with
  the following properties. The entries of
  $\polC$ are denoted $\bc_{ij}\in\polR^d$ and are assumed to be such that $\bc_{ij}=0$ if
  $j\not\in\calV(i)$. For all $i\in\calV(i)$, the coefficients
  $\{\bc_{ij}\}_{j\in\calV(i)}$ approximate the gradient operator on
  average in some reasonable sense. We further assume that
  \begin{align}
    \bc_{ij} = -\bc_{ji} \quad \forall (i,j)
    \in \calV\upint\CROSS\calV \cup \calV\CROSS\calV\upint,\quad\qquad
    \sum_{j\in\calV(i)}\bc_{ij} = 0.\label{ass:on_cij}
  \end{align}
  For instance, assuming
  that the approximation is done with continuous finite elements with
  global shape functions $\{\varphi_i\}_{i\in\calV}$, then the
  coefficients $\bc_{ij}\eqq\int_\Dom \varphi_i\GRAD \varphi_j\diff x$
  satisfy this property. Indeed, letting
  $z_h=\sum_{i\in\calV}\sfz_i\varphi_i$ be the approximation of some
  function $z:\Dom\to \Real$, we observe that
  $\int_\Dom \varphi_i(\bx) \GRAD z(\bx) \diff x\approx \int_\Dom
    \varphi_i(\bx) \GRAD z_h(\bx) \diff x =
    \sum_{j\in\calV(i)}\sfz_j\bc_{ij}$, which is the desired property. We
  also observe that $\bc_{ij}=-\bc_{ji}$ if
  $\varphi_i\varphi_{j|\front}=0$.  The partition of unity implies
  property implies $\sum_{j\in\calV(i)}\bc_{ij} = 0$.

  \manuallabel{ass5}{v} The underlying spatial
  discretization provides a matrix $\polK(\rho,T)$ with the following
  properties: $\polK(\rho,T)$ is a real-valued $I\CROSS I$ matrix
  approximating in some sense the diffusion operator
  $\ER\mapsto -\DIV(\frac{c}{3\sigma_t(\rho,T)}\GRAD \ER)$. This matrix
  may depend on some given mass, $\rho$, and temperature distribution,
  $T$, or approximation thereof. The
  entries of $\polK(\rho,T)$ are denoted $\sfk_{ij}(\rho,T)$.
  We finally assume that
  \begin{align}
    \sfk_{ij}=-\sfk_{ji} \ \forall (i,j)\in \calV^2,\quad
    \sfk_{ij}\le 0 \ \forall j\in\calV^*(i),\forall i\in\calV,\quad
    \sum_{j\in\calV(i)}\sfk_{ij} = 0\ \forall i\in\calV.\label{ass:on_kij}
  \end{align}
  For instance,
  with continuous finite elements and global shape functions
  $\{\varphi_i\}_{i\in\calV}$, we have
  $\sfk_{ij}(\rho,T)\eqq \int_\Dom \frac{c}{3\sigma_t(\rho,T)}\GRAD
    \varphi_i\SCAL\GRAD\varphi_j \diff x$.
\end{assumption}

Examples of discretization
techniques satisfying the above assumptions are described
in~\citep{Guermond_Popov_Tomas_2019}.

\begin{definition}[Conservation]\label{def:conservation}%
  We say that a scheme $\{(\varrho_i^n,\bsfm_i^{n\sfT},\sfE^n\lo{m},\sfE^n\lo{r})\}_{i\in\calV}
    \mapsto \{(\varrho_i^{n+1},\bsfm_i^{n+1\sfT},\sfE^{n+1}\lo{m},\sfE^{n+1}\lo{r})\}_{i\in\calV}$ is
  conservative if
  \[\sum_{i\in\calV} m_i \varrho_i^n=\sum_{i\in\calV} m_i \varrho_i^{n+1},\quad
    \sum_{i\in\calV} m_i \bsfm_i^{n}=\sum_{i\in\calV} m_i \bsfm_i^{n+1},\quad
    \sum_{i\in\calV} m_i \sfE_{\textup{tot} i}^n=\sum_{i\in\calV} m_i \sfE_{\textup{tot} i}^{n+1}.
  \]
\end{definition}

\section{First-order IDP scheme}\label{sec:full_scheme}
As our high-order scheme (not presented in this paper) is based on the
combination of an IDP low-order method using forward and backward
Euler time stepping with a high-order IMEX method, we first explain in
this section how to construct the IDP low-order method. The method is
composed of three stages. The first two stages
approximate~\eqref{hyperbolic_one} and~\eqref{hyperbolic_two} using
the forward Euler method, whereas the third stage
solves~\eqref{parabolic} using a linearized version of the backward
Euler method.

\subsection{Hyperbolic stage 1}\label{Sec:approx_stage_1}

Let us assume that the approximation at time $t^n$ of the solution to
\eqref{eq:radiation}, say $\bu_h^n\eqq\sum_{j\in\calV}\bsfu_j^n
  \varphi_i$, is such that $\bsfu_i^n \in\calA(b)$ for all $i\in\calV$
where $\calA(b)$ is defined in \eqref{eq:def_calA_rad}.  Let $\dt$ be
the time step at $t^n$, and let us set $t^{n+1}\eqq t^n + \dt$.

Our first goal is to construct
a low-order IDP update
$\bu_h^{n,1}\eqq\sum_{j\in\calV}\bsfu_i^{n,1} \varphi_i$ of the
solution to the first hyperbolic stage~\eqref{hyperbolic_one}.
We essentially proceed as in
\citep{guermond_popov_sinum_2016} using the
technique from \cite{Clayton_Guermond_Popov_SIAM_SISC_2022} to be able
to use tabulated equations of states.

We first define the following low-order flux for all $i\in\calV$ and all
$j\in\calV(i)$:
\begin{equation} \label{def_low_flux_hyp1}
  \bsfF\up{L,n,1}_{ij} \eqq  -(\polg(\bsfu_j^n)+ \polg(\bsfu_i^n))\bc_{ij}
  + d^{\textup{L},n,1}_{ij}(\bsfu_j^n-\bsfu_i^n),
\end{equation}
where $\bc_{ij}\in\Real^d$ is defined
in~\S\ref{Sec:Space_approximation}, the flux $\polg$ is defined in
\eqref{hyperbolic_one}, and the low-order graph viscosity coefficient
$d^{\textup{L},n,1}_{ij}$ is defined by
\begin{equation}
  d^{\textup{L},n,1}_{ij}\eqq
  \max\big(\wlambda_{\max}(\bn_{ij},\pi^1(\bsfu_i^{n}),\pi^1(\bsfu_j^{n})),
  \wlambda_{\max}(\bn_{ji},\pi^1(\bsfu_j^{n}),\pi^1(\bsfu_i^{n}))\big).\label{def_dij_L1}
\end{equation}
Here, $\wlambda_{\max}(\bn,\pi^1(\bsfu_L),\pi^1(\bsfu_R))$ is any upper bound on the
maximum wave speed in the Riemann problem with
the extended flux $\wpolg(\cdot)\bn$, with $\wpolg(\cdot)$ defined in
\eqref{def_flux_radiation_hyp}, and
\begin{equation}
  \pi^1(\bsfu)\eqq (\varrho, \bbm\SCAL\bn,
  \sfE_m-\tfrac12(\|\bsfv\|_{\ell^2}^2 -(\bsfv\SCAL\bn)^2),\sfE_r)\tr.
  \end{equation}
A source code providing a guaranteed
upper bound $\wlambda_{\max}(\bn,\bu_L,\bu_R)$ for every pressure
oracle satisfying~\eqref{Pressure_is_nonnegative}  is available
at \citep{guermond_jean_luc_2021_4685868}.
We then define the low-order hyperbolic update $\bsfu^{n,1}$ by setting
\begin{equation}\label{low_udate_hyp1}%
  m_i\bsfu_i^{n,1}=m_i\bsfu_i^{n} +\dt \bsfF\up{L,n,1}_{i},
  \qquad \bsfF\up{L,n,1}_{i}\eqq\sum_{j\in\calV(i)}
  \bsfF\up{L,n,1}_{ij}.%
\end{equation}%
\begin{lemma}[$\bu^n\mapsto \bsfu^{n,1}$ is IDP\,\&\,conservative]\label{lem:IDP_stage_1}
  Assume that the space discretization meets the structural
  assumptions \eqref{ass1}--\eqref{ass5} from
  Assumption~\ref{Ass:assumption_on_space_discretization}.  Assume
  that $\bsfu_i^n$ is in $\calA(b)$ for all $i\in\calV$.  Assume that the time step
  satisfies $\dt\le \max_{i\in\calV}$.  Let $\bsfu^{n,1}$
  be defined in \eqref{low_udate_hyp1}. Then
\begin{enumerate}[font=\upshape,label=(\roman*)]
 \item $\bsfu_i^{n,1}$ is in $\calA(b)$ for all $i\in\calV$.
\item The mapping $\bu^n\mapsto \bsfu^{n,1}$ is conservative.
\end{enumerate}
\end{lemma}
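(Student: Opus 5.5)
The plan follows the standard argument of \citep{guermond_popov_sinum_2016}: write the update as a convex combination of auxiliary ``bar states'' that are averages of one-dimensional Riemann solutions. The CFL condition in the statement is garbled; I will read it as the usual bound $\dt\le \min_{i\in\calV}\frac{m_i}{2\sum_{j\in\calV^*(i)}d^{\textup{L},n,1}_{ij}}$.

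\textbf{IDP property.} First, using $\sum_{j\in\calV(i)}\bc_{ij}=0$, I rewrite the flux sum as $\sum_{j\in\calV(i)}\bsfF\up{L,n,1}_{ij}=\sum_{j\in\calV^*(i)}\big(-(\polg(\bsfu_j^n)-\polg(\bsfu_i^n))\bc_{ij}+d^{\textup{L},n,1}_{ij}(\bsfu_j^n-\bsfu_i^n)\big)$, since the $j=i$ terms vanish. Setting $\bn_{ij}\eqq \bc_{ij}/\|\bc_{ij}\|_{\ell^2}$, I define the bar states
\[
\overline{\bsfu}_{ij}^{n}\eqq \tfrac12(\bsfu_i^n+\bsfu_j^n)-\frac{\|\bc_{ij}\|_{\ell^2}}{2d^{\textup{L},n,1}_{ij}}\big(\polg(\bsfu_j^n)-\polg(\bsfu_i^n)\big)\bn_{ij}.
\]
With these, the update becomes
\[
\bsfu_i^{n,1}=\Big(1-\sum_{j\in\calV^*(i)}\tfrac{2\dt d^{\textup{L},n,1}_{ij}}{m_i}\Big)\bsfu_i^n+\sum_{j\in\calV^*(i)}\tfrac{2\dt d^{\textup{L},n,1}_{ij}}{m_i}\overline{\bsfu}_{ij}^{n}.
\]
Under the CFL condition this is a convex combination. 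It remains to show two things: $\calA(b)$ is convex, and each $\overline{\bsfu}_{ij}^n\in\calA(b)$.

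Convexity of $\calA(b)$ is elementary. The constraints on $\rho$ and $\ER$ are linear. The constraint $\rho e(\bu)-\rho e\lo{cold}(\rho)>0$ defines a convex set: $\rho e=\Emech-\frac{\|\bbm\|^2}{2\rho}$ is concave, and the quasiconcavity assumption on $e\lo{cold}$ is what the argument of \cite{Clayton_Guermond_Popov_SIAM_SISC_2022} uses to handle the cold-curve term.

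\textbf{Bar states in $\calA(b)$ (main obstacle).} This is the step I expect to be hardest. I will use the fact that $\overline{\bsfu}_{ij}^n$ is the space average over $[-\tfrac12,\tfrac12]$, at time $t=\|\bc_{ij}\|/(2d^{\textup{L},n,1}_{ij})$, of the Riemann solution for the one-dimensional projected problem with flux $\polg(\cdot)\bn_{ij}$ and data $(\bsfu_i^n,\bsfu_j^n)$. This identity holds as long as $d^{\textup{L},n,1}_{ij}$ dominates the maximum wave speed, which is guaranteed by \eqref{def_dij_L1}.

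The key observation is that the radiation component $\ER$ is a passively advected quantity with speed $\bv\SCAL\bn$. The projected system is therefore the Euler system, with the tangential kinetic energy removed through $\pi^1$, plus a linearly degenerate contact carrying $\ER$. Two consequences follow:
\begin{itemize}
\item The Riemann solution for $\ER$ takes only the values $\ER_i$ and $\ER_j$, so its average stays positive.
\item The Euler part stays in $\calB(b)$ by the invariant-domain results for the extended flux $\wpolg$ (with the tabulated-EOS technique of \cite{Clayton_Guermond_Popov_SIAM_SISC_2022}).
\end{itemize}
Averaging over an interval preserves membership in the convex set $\calA(b)$, so $\overline{\bsfu}_{ij}^n\in\calA(b)$, and hence $\bsfu_i^{n,1}\in\calA(b)$.

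\textbf{Conservation.} Sum $m_i\bsfu_i^{n,1}$ over $i\in\calV$. The centered flux terms are $-(\polg(\bsfu_j^n)+\polg(\bsfu_i^n))\bc_{ij}$. By the skew-symmetry of $\bc_{ij}$ in \eqref{ass:on_cij}, the contributions of each pair $(i,j)$ with at least one interior index cancel; boundary pairs are handled by the boundary flux or boundary-condition treatment. The viscous terms cancel pairwise because $d^{\textup{L},n,1}_{ij}$ is symmetric by construction. This gives conservation of $\rho$ and $\bbm$. It also gives conservation of $\Emech+\ER$, because both components are updated in conservation form. Hence $\sfE_{\textup{tot}}$ is conserved in the sense of Definition~\ref{def:conservation}.
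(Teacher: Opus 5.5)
Your plan is the argument the paper relies on. Its proof cites Theorem~4.6 of Clayton--Guermond--Popov for the convex-combination/bar-state argument and uses the skew-symmetry of $\bc_{ij}$ for conservation. Your convex-combination identity, your reading of the CFL condition, and your conservation argument are fine.

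One step, however, is false as stated. The $\ER$-component of the Riemann solution does \emph{not} take only the values $\sfE_{\sfr,i}^n$ and $\sfE_{\sfr,j}^n$. It obeys the conservation law $\partial_t\ER+\partial_x(v\ER)=0$, not the transport equation $\partial_t\ER+v\,\partial_x\ER=0$. Combined with the mass equation, this makes $\ER/\rho$ (not $\ER$) constant on each side of the contact. Thus $\ER(\xi)=\rho(\xi)\,\sfE_{\sfr,i}^n/\varrho_i^n$ to the left of the contact and $\rho(\xi)\,\sfE_{\sfr,j}^n/\varrho_j^n$ to the right, so $\ER$ jumps across the acoustic shocks and varies in the rarefactions. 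The paper's appendix makes the same slip. The conclusion survives, because $\rho(\xi)>0$ throughout the fan.

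You can also bypass the fan entirely. Set $\lambda\eqq d^{\textup{L},n,1}_{ij}/\|\bc_{ij}\|_{\ell^2}$ and $v_k\eqq\bsfv_k^n\SCAL\bn_{ij}$. Then the $\ER$-component of $\overline{\bsfu}_{ij}^n$ is $\frac12\sfE_{\sfr,i}^n(1+v_i/\lambda)+\frac12\sfE_{\sfr,j}^n(1-v_j/\lambda)$. Both brackets are positive, because the extreme speeds $\lambda^-\le\lambda^+$ of the (extended) fan satisfy $-\lambda\le\lambda^-<v_i$ and $v_j<\lambda^+\le\lambda$.

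Two further points need fixing. First, quasiconcavity of $e\lo{cold}$ does not give convexity of $\calA(b)$. On the slice $\bbm=\bzero$ the constraint reads $\Emech>\rho\,e\lo{cold}(\rho)$, which defines a convex set if and only if $\rho\mapsto\rho\,e\lo{cold}(\rho)$ is convex. For example, $e\lo{cold}(\rho)=-\rho^2$ is concave, yet $-\rho^3$ is not convex. The property you actually need is convexity of $\rho\mapsto\rho\,e\lo{cold}(\rho)$, which makes $\rho e(\bu)-\rho e\lo{cold}(\rho)$ concave. In the setting where the paper actually builds the extended Riemann solver, $e\lo{cold}=0$ and this holds automatically.

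Second, run the bar-state argument through the extended problem from the start. The definition \eqref{def_dij_L1} bounds the wave speeds of the Riemann problem with flux $\wpolg$, not those of the original problem, which for a tabulated oracle need not even be uniquely solvable. The correct chain is:
\begin{itemize}
\item $\overline{\bsfu}_{ij}^n=\Pi(\overline{\wbu}_{ij})$;
\item $\overline{\wbu}_{ij}$ is the average of the extended Riemann solution;
\item $\Pi$ of that solution lies in $\calA(b)$.
\end{itemize}
This requires $d^{\textup{L},n,1}_{ij}\ge\|\bc_{ij}\|_{\ell^2}\,\wlambda_{\max}$. So \eqref{def_dij_L1} must be read with the factor $\|\bc_{ij}\|_{\ell^2}$, as in Guermond--Popov.
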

\begin{proof}
  See the proof of Theorem~4.6 in
  \citep{Clayton_Guermond_Popov_SIAM_SISC_2022}. The conservation in
  the sense of Definition~\ref{def:conservation} is a consequence of
  the identity $\bc_{ij}=-\bc_{ij}$ which we assume to hold when either $i$ or $j$ is not a
  boundary degree of freedom; see
  Assumption~\ref{Ass:assumption_on_space_discretization}\eqref{ass4}.
\end{proof}

\subsection{Hyperbolic stage 2}\label{sec:approx_stage_2}
We continue with the approximation of the second hyperbolic
stage~\eqref{hyperbolic_two}. For every state
$\bsfu\eqq(\varrho,\bsfm\tr,\sfE_{\sfm},\sfE_{\sfr})\tr$,
we define the reduced state $\bsfw\eqq(\varrho,\bsfm\tr,\sfE\lo{t})\tr$ where
$\sfE\lo{t}\eqq \sfE_{\sfr} +\frac12
  \varrho\|\bsfv\|_{\ell^2}^2$ with $\bsfv\eqq\frac{\bsfm}{\varrho}$.

Given the initial data
$(\varrho^{n},(\bsfm^{n})\tr,\sfE_{\sfm}^{n},\sfE_{\sfr}^{n})\tr$,
we set $\bsfw^{n}\eqq (\varrho^n, (\bsfm^{n})\tr, \sfE\lo{t}^{n})\tr$, with
$\sfE\lo{t}^{n}\eqq \sfE_{\sfr}^{n} +\frac12
  \varrho^{n}\|\bsfv^{n}\|_{\ell^2}^2$. Next we define the low-order
flux corresponding to the nontrivial part of the system of balance
equations~\eqref{hyperbolic_two},
\begin{align} \label{def_low_flux_hyp2}
  \bsfK\up{L,n,2}_{ij} & \eqq  -(\polk(\bsfw_j^{n})+ \polk(\bsfw_i^{n}))\bc_{ij}
  + d^{\textup{L},n,2}_{ij}(\bsfw_j^{n}-\bsfw_i^{n}),                            \\
  \polk(\bsfw)         & \eqq (\bzero, p_{\sfr}(\bsfw)\polI,\bsfv p_{\sfr}(\bsfw))\tr,
  \quad \text{with} \quad p_r(\bsfw)\eqq
  \tfrac13(\sfE\lo{t}-\tfrac12 \varrho\|\bsfv\|_{\ell^2}^2).
\end{align}
The low-order graph viscosity coefficient
$d^{\textup{L},n,2}_{ij}$ is defined for all $i\in\calV$, $j\in\calV^*(i)$, by
\begin{equation}
  d^{\textup{L},n,2}_{ij}\eqq \max(\wmu_{\max}(\bn_{ij},\pi^2_{\bn_{ij}}(\bsfw_i^{n}),
  \pi^2_{\bn_{ij}}(\bsfw_j^{n})),
  \wmu_{\max}(\bn_{ji},\pi^2_{\bn_{ij}}(\bsfw_j^{n}),\pi^2_{\bn_{ij}}(\bsfw_i^{n})),\label{def_dij_L2}
\end{equation}
where $\wmu_{\max}(\bn,\pi^2_{\bn}(\bsfw_L),\pi^2_{\bn}(\bsfw_R))$ is
any upper bound on the maximum wave speed in the Riemann problem
\eqref{app_hyperbolic_two} with $\pi^2_{\bn}(\bsfw)\eqq(\varrho,
\varrho \bsfv\SCAL\bn,\sfE_{\textup{t}}-\frac12
\varrho\|\bsfv\|_{\ell^2}^2+\tfrac12\varrho (\sfv\SCAL\bn)^2 )\tr$.
  Using the definition of $\sfE\lo{t}$, this also gives
  \begin{equation}
    \pi^2_{\bn}(\bsfw) = (\varrho,\varrho\bsfv\SCAL\bn,\sfE_{\textup{r}}
    +\tfrac12\varrho(\bsfv\SCAL\bn)^2)\tr.
  \end{equation}
  All the details regarding
the computation of $\wmu_{\max}$ are given in \S\ref{Sec:Second_Riemann_Problem}.
We define the low-order hyperbolic update
$\bsfw^{n,2}\eqq (\varrho^{n,2},(\sfm^{n,2})\tr,\sfE\lo{t}^{n,2})\tr$ by setting
\begin{equation}\label{low_udate_hyp2}
  m_i\bsfw_i^{n,2}\eqq m_i\bsfw_i^{n} +\dt \bsfK\up{L,n,2}_{i}, \qquad
  \bsfK\up{L,n,2}_{i}\eqq\sum_{j\in\calV(i)}
  \bsfK\up{L,n,2}_{ij}.
\end{equation}
The update
$\bu^{n,2}\eqq (\varrho^{n,2},\bsfm^{n,2},\sfE_{\sfm}^{n,2},\sfE_{\sfr}^{n,2})\tr$ is
then obtained by setting
\begin{subequations}\label{change_var_wn2_to_un2}%
\begin{align}%
\varrho^{n,2} \eqq{}  & \varrho^{n,2},\\
\bsfm^{n,2}   \eqq{}  & \bsfm^{n,2},\\
\sfE_{\sfm}^{n,2}\eqq{}& \sfE_{\sfm}^{n} - \tfrac12\varrho^{n}\|\bsfv^{n}\|_{\ell^2}^2
+\tfrac12\varrho^{n,2}\|\bsfv^{n,2}\|_{\ell^2}^2,\label{change_var_wn2_to_un2:Em}\\
\sfE_{\sfr}^{n,2}\eqq{}& \sfE\lo{t}^{n,2}
-\tfrac12\varrho^{n,2}\|\bsfv^{n,2}\|_{\ell^2}^2.\label{change_var_wn2_to_un2:Er}
\end{align}%
\end{subequations}%
The definitions of the updates
\eqref{change_var_wn2_to_un2:Em}-\eqref{change_var_wn2_to_un2:Er}
follow from~\eqref{hyperbolic_two_update_ER_and_Emech}. That is to
say, the update \eqref{change_var_wn2_to_un2:Er} is a materialization
of the definition $\sfE\lo{t}\eqq
\sfE\lo{r}+\tfrac12\varrho\|\bsfv\|_{\ell^2}^2$, and the update
\eqref{change_var_wn2_to_un2:Em} defines $\sfE_{\sfm}^{n,2}$ by
enforcing the internal energy, $\sfE_{\sfm}-
\tfrac12\varrho\|\bsfv\|_{\ell^2}^2$, to be constant.

\begin{remark}[Density update]Notice that
although the conservation equation for the density is $\partial_t\rho=0$,
the update $\varrho^{n,2}$ is not equal to $\varrho^n$. The actual density update
is given by $\varrho_i^{n,2}=\varrho_i^{n}+\frac{\dt}{m_i}\sum_{j\in\calV(i)}
d^{\textup{L},n,2}_{ij}(\varrho_j^n-\varrho_i^n)$.
\end{remark}
\begin{lemma}[$\bu^{n}\mapsto \bsfu^{n,2}$ is IDP\,\&\,conservative]\label{lem:IDP_stage_2}
  Assume that the structural
  assumptions \eqref{ass1}--\eqref{ass5} from
  Assumption~\ref{Ass:assumption_on_space_discretization} are met.  Assume
  that $\bsfu_i^{n}$ is in $\calA(b)$ for all $i\in\calV$. Assume that the
  time step is chosen so that $\dt\le \max_{i\in\calV} \frac{2}{m_i}
  \sum_{j\in\calV^*(i)}d^{\textup{L},n,2}_{ij}$. Let
  $\bsfu^{n,2}$ be defined in~\eqref{change_var_wn2_to_un2} with
  $\bsfw^{n,2}$ defined in~\eqref{low_udate_hyp2}. Then
  \begin{enumerate}[font=\upshape,label=(\roman*)]
  \item $\bsfu_i^{n,2}$ is in $\calA(b)$ for all $i\in\calV$.
    \item The mapping
   $\bu^{n}\mapsto \bsfu^{n,2}$ is conservative.
   \end{enumerate}
\end{lemma}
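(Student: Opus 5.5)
The plan is to use the standard convex-combination argument of \citep{guermond_popov_sinum_2016} on the reduced system~\eqref{hyperbolic_two}, and then transfer the result to $\bsfu^{n,2}$ through the change of variables~\eqref{change_var_wn2_to_un2}.

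\textbf{Step 1: bar-state decomposition.} Since $\sum_{j\in\calV(i)}\bc_{ij}=0$, the update~\eqref{low_udate_hyp2} can be rewritten as
\[
\bsfw_i^{n,2}=\Big(1-\tfrac{2\dt}{m_i}\sum_{j\in\calV^*(i)}d^{\textup{L},n,2}_{ij}\Big)\bsfw_i^n+\sum_{j\in\calV^*(i)}\tfrac{2\dt\, d^{\textup{L},n,2}_{ij}}{m_i}\,\overline{\bsfw}_{ij}^n ,
\]
with the bar states
\[
\overline{\bsfw}_{ij}^n\eqq\tfrac12(\bsfw_i^n+\bsfw_j^n)-\tfrac{1}{2d^{\textup{L},n,2}_{ij}}\big(\polk(\bsfw_j^n)-\polk(\bsfw_i^n)\big)\bc_{ij}.
\]
The CFL condition (read as $\frac{2\dt}{m_i}\sum_{j\in\calV^*(i)}d^{\textup{L},n,2}_{ij}\le 1$ for every $i$) makes this a convex combination.

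\textbf{Step 2: the bar states are admissible.} Since $d^{\textup{L},n,2}_{ij}$ dominates the maximum wave speed of the one-dimensional Riemann problem \eqref{app_hyperbolic_two} in the direction $\bn_{ij}$ (computed with the projected states $\pi^2_{\bn_{ij}}$), each $\overline{\bsfw}_{ij}^n$ is the space average of the exact Riemann solution at time $t=\frac{\|\bc_{ij}\|}{2d^{\textup{L},n,2}_{ij}}$. The tangential momentum is a passive scalar, so it is handled as in \citep{Clayton_Guermond_Popov_SIAM_SISC_2022}. Hence it suffices to show that the set
\[
\calA_2\eqq\{\bsfw\st \varrho>0,\ 1-b\varrho>0,\ \sfE\lo{t}-\tfrac12\varrho\|\bsfv\|^2>0\}
\]
is convex and invariant for that Riemann problem.
\begin{itemize}
\item \emph{Convexity.} The map $\bsfw\mapsto\sfE\lo{t}-\frac{\|\bsfm\|^2}{2\varrho}$ is concave, so $\calA_2$ is convex.
\item \emph{Invariance.} The density is constant in time, so $0<\varrho<1/b$ holds. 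Positivity of $\sfE\lo{r}$ is expected from the structure of \S\ref{Sec:Second_Riemann_Problem}: the system behaves like a pressureless-in-$\varrho$ gas with $\gamma=4/3$ acting on $\sfE_{\sfr}$. Equivalently, $\sfE_{\sfr}$ solves $\partial_t\sfE_{\sfr}+\frac43\sfE_{\sfr}\DIV\bv=0$ in the Lagrangian sense, so it stays positive.
\end{itemize}
I would invoke the wave-speed analysis of \S\ref{Sec:Second_Riemann_Problem} for this invariance, and I expect it to be the main obstacle. The reason is that the flux has $\partial_t\varrho=0$ while $\bsfv=\bsfm/\varrho$, so the wave structure is degenerate and I would have to check that no vacuum in $\sfE_{\sfr}$ forms. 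Convexity then gives $\bsfw_i^{n,2}\in\calA_2$.

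\textbf{Step 3: transfer to $\bsfu^{n,2}$.} From $\bsfw_i^{n,2}\in\calA_2$ we obtain $\varrho_i^{n,2}>0$, $1-b\varrho_i^{n,2}>0$, and $\sfE_{\sfr,i}^{n,2}>0$ by~\eqref{change_var_wn2_to_un2:Er}. By~\eqref{change_var_wn2_to_un2:Em} the internal energy is unchanged, $\varepsilon_i^{n,2}=\varepsilon_i^n=\varrho_i^ne_i^n$. However, $e(\bsfu_i^{n,2})=\varepsilon_i^n/\varrho_i^{n,2}$ changes with the density, so the condition $e>e\lo{cold}(\varrho)$ needs care. Here I would use that the density update is itself a convex combination $\varrho_i^{n,2}=\sum_j\lambda_j\varrho_j^n$. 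The plan is to exploit quasiconcavity of $e\lo{cold}$ together with $\varrho_i^n e_i^n>\varrho_i^n e\lo{cold}(\varrho_i^n)$, and to verify $\varepsilon_i^n>\varrho_i^{n,2}e\lo{cold}(\varrho_i^{n,2})$. This may require an extra argument, for instance that $\varrho e\lo{cold}(\varrho)$ is controlled along the convex combination. If this fails, a fallback is to scale the internal energy update $\varepsilon_i^{n,2}\eqq\varrho_i^{n,2}e_i^n$; note, however, that this would alter conservation of $\sfE_{\textup{tot}}$.

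\textbf{Step 4: conservation.} Summing $m_i\bsfw_i^{n,2}$ over $i$, the centered terms cancel by skew-symmetry of $\bc_{ij}$ (away from the boundary), and the viscous terms cancel by symmetry of $d^{\textup{L},n,2}_{ij}$. Hence $\sum_i m_i\varrho_i$, $\sum_i m_i\bsfm_i$, and $\sum_i m_i\sfE_{\textup{t},i}$ are conserved. Finally,
\[
\sfE_{\textup{tot}}^{n,2}=\sfE_{\sfm}^{n,2}+\sfE_{\sfr}^{n,2}=\varepsilon^n+\sfE\lo{t}^{n,2},
\]
so $\sum_i m_i\sfE_{\textup{tot},i}^{n,2}=\sum_i m_i\varepsilon_i^n+\sum_i m_i\sfE\lo{t,i}^{n}=\sum_i m_i\sfE_{\textup{tot},i}^n$, which gives conservation.
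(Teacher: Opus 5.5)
Your Steps~1, 2 and~4 follow the paper's proof: the same convex combination of bar states, with the CFL bound read as intended, $\frac{2\dt}{m_i}\sum_{j\in\calV^*(i)}d^{\textup{L},n,2}_{ij}\le 1$. Your $\calA_2$ is the paper's $\calR$, and conservation follows from $\sfE_{\textup{tot},i}^{n,2}=\varepsilon_i^n+\sfE_{\textup{t},i}^{n,2}$ in both. The genuine gap is Step~3, which you leave open, and the route you sketch cannot close it.

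The internal energy $\varepsilon_i^{n,2}=\varepsilon_i^n$ is frozen. The density update $\varrho_i^{n,2}=\varrho_i^n+\frac{\dt}{m_i}\sum_j d^{\textup{L},n,2}_{ij}(\varrho_j^n-\varrho_i^n)$ does not depend on $\varepsilon_i^n$ at all, because $\pi^2_{\bn}$ involves only $\varrho$, $\bsfv\SCAL\bn$ and $\sfE_{\sfr}$. Quasiconcavity of $e_{\textup{cold}}$ gives no control of $\rho\mapsto\rho\,e_{\textup{cold}}(\rho)$ along this convex combination. In fact (i) cannot hold for a general quasiconcave cold curve. Take $e_{\textup{cold}}(\rho)=K\rho$ with $K>0$, and neighbours with $\varrho_j^n>\varrho_i^n$; since $d^{\textup{L},n,2}_{ij}>0$, this gives $\varrho_i^{n,2}>\varrho_i^n$. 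Then pick $\varepsilon_i^n\in\big(K(\varrho_i^n)^2,K(\varrho_i^{n,2})^2\big)$: the old state is admissible, but the new one lies below the cold curve.

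The missing ingredient is the simplification the paper works under, stated in the extended-flux subsection of Appendix~A: $e_{\textup{cold}}\equiv 0$. With it, Step~3 is one line: $\varrho_i^{n,2}e(\bsfu_i^{n,2})=\varepsilon_i^n=\varrho_i^n e(\bsfu_i^n)>0$, together with $\varrho_i^{n,2}>0$ from Step~2. This is exactly what the paper's chain of equalities delivers. Its last comparison is with the cold curve at the \emph{old} density, which is conclusive only when $e_{\textup{cold}}$ vanishes. Your fallback of rescaling $\varepsilon$ is therefore unnecessary and, as you note, would break conservation of $\sfE_{\textup{tot}}$.

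On Step~2, the paper simply asserts the invariance you single out as the main obstacle. You should not try to show that no vacuum forms, because it can: $p^*=0$ in Case~0 of \S\ref{Sec:Second_Riemann_Problem}. What the appendix gives is $p^*\ge 0$, with every pressure in the fan lying between $p_Z$ and $p^*$. Hence $\sfE_{\sfr}=3p\ge 0$ throughout the Riemann solution, and $\sfE_{\sfr}>0$ on a set of positive measure. Jensen's inequality for the concave map $\bsfw\mapsto\sfE_{\textup{t}}-\|\bsfm\|_{\ell^2}^2/(2\varrho)$, which you already use for convexity, then gives strict positivity of the radiation energy of the bar state.

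Your smooth-solution heuristic is also off. Stage~2 has no advection, so the relevant identity is $\partial_t\sfE_{\sfr}=-\frac13\sfE_{\sfr}\DIV\bv$, not the $\frac43$ Lagrangian form of the full radiation equation.
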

\begin{proof}
  We apply the generic theory developed in
  \citep{guermond_popov_sinum_2016}; in particular, we invoke
  Theorem~4.1 therein. We start be defining
  \[
    \overline{\bsfw}_{ij}^n \eqq \tfrac12(\bsfw_i^{n}+\bsfw_j^{n}) -
    \tfrac12 (\polk(\bsfw_j^{n})
    -\polk(\bsfw_i^{n}))\SCAL\bn_{ij}\tfrac{\|\bc_{ij}\|_{\ell^2}}{d^{\textup{L},n,2}_{ij}}.
  \]
  After rearranging the terms in \eqref{low_udate_hyp2} and using that
  $\sum_{j\in\calV(i)}\bc_{ij}=0$, we obtain
  \[
    \bsfw_i^{n,2} = \bsfw_i^{n}\Big(1 - \frac{2\dt}{m_i}\sum_{j\in\calV^*(i)}d^{\textup{L},n,2}_{ij}\Big)
    + \sum_{j\in\calV^*(i)}\frac{2}{m_i}d^{\textup{L},n,2}_{ij} \overline{\bsfw}_{ij}^n.
  \]
  Thanks to the assumption we made on the time step, the above identity
  is a convex combination. Thanks to the definition of
  $\wmu_{\max}(\bn_{ij},\pi^2_{ij}(\bsfu_i^{n}),\pi^2_{ij}(\bsfu_j^{n}))$ and
  $d^{\textup{L},n,2}_{ij}$ it can be shown
  that $\overline{\bsfw}_{ij}^n$ is a space average of the exact
  solution to the Riemann problem with flux $\polk(\bv)\bn_{ij}$ and
  with left state $\bsfw_i^n$ and
  right state $\bsfw_j^n$.  Let
  us consider the domain
  \[
  \calR\eqq \{\bsfw\eqq (\rho,\bbm\tr,E\lo{t})\in
  \Real^{d+2}\st \rho>0, 1-b\rho>0, E\lo{t}-\tfrac12 \rho\|\bv\|_{\ell^2}^2 > 0\}.
  \]
  Since $\bsfu_i^{n}\in\calA(b)$ and $\bsfu_j^{n}\in\calA(b)$, we
  conclude that $\bsfw_i^{n}\in\calR$ and $\bsfw_j^{n}\in\calR$. As
  the domain $\calR$ is invariant under the action of the (entropy) solution operator
  of the Riemann problem and is convex, we conclude using Jensen's inequality
  that space averages of the exact solution to the Riemann problem
  remain in $\calR$. This in turn implies that $\overline{\bsfw}_{ij}^n$
  is in $\calR$.  Invoking again the convexity of $\calR$, we conclude
  that $\bsfw_i^{n,2}$ is in $\calR$ because $\bsfw_i^{n,2}$ is a convex
  combination of states in $\calR$; hence, the radiation energy of the
  state $\bsfu_i^{n,2}$ defined in \eqref{change_var_wn2_to_un2:Er} is
  positive.  The internal energy of the state $\bsfu_i^{n,2}$ defined in
  \eqref{change_var_wn2_to_un2:Em} is above the cold curve because
  $\varrho_i^{n,2}e(\bsfu_i^{n,2})\eqq
    \sfE_{\sfm}^{n,2}-\tfrac12\varrho^{n,2}\|\bsfv^{n,2}\|_{\ell^2}^2 \eqq
    \sfE_{\sfm}^{n} - \tfrac12\varrho^{n}
    \|\bsfv^{n}\|_{\ell^2}^2\qqe \varrho_i^{n}e(\bsfu_i^{n}) >e\lo{cold}(\varrho_i^{n})$. Likewise we have
  $0< \varrho_i^{n,2}$, $0<1-b\varrho_i^{n,2}$. In conclusion $\bsfu_i^{n,2}$ is
  in $\calA(b)$ for all $i\in\calV$.

  The conservation of mass and momentum in 
  the sense of Definition~\ref{def:conservation} is a consequence of
  the identity $\bc_{ij}=-\bc_{ij}$ which we assume to hold when either $i$ or $j$ is not a
  boundary degree of freedom; see
  Assumption~\ref{Ass:assumption_on_space_discretization}\eqref{ass4}.
  Let us now verify that the total energy is conserved.
  Adding \eqref{change_var_wn2_to_un2:Em} and \eqref{change_var_wn2_to_un2:Er} we obtain
  \[\sfE_{\sfr i}^{n,2}+\sfE_{\sfm i}^{n,2}  =  \sfE_{\textup{t} i}^{n,2}
    + \sfE_{\sfm i}^{n} - \tfrac12\varrho_i^{n}
    \|\bsfv_i^{n}\|_{\ell^2}^2 .\]
  Summing over $i\in\calV$ gives
  \[
    \sum_{i\in\calV } m_i\sfE_{\textup{tot} i}^{n,2}  =   \sum_{i\in\calV } m_i \sfE_{\textup{t} i}^{n,2}
    + \sum_{i\in\calV } m_i (\sfE_{\sfm i}^{n} - \tfrac12\varrho_i^{n}
    \|\bsfv_i^{n}\|_{\ell^2}^2) .
  \]
  But $ \sum_{in\in\calV } m_i \sfE_{\textup{t} i}^{n,2} =
    \sum_{in\in\calV } m_i \sfE_{\textup{t} i}^{n}$ because we
  assumed that $\bc_{ij}=-\bc_{ij}$ when either $i$ or $j$ is not a
  boundary degree of freedom; see
  Assumption~\ref{Ass:assumption_on_space_discretization}\eqref{ass4}.
  The definition of $\sfE_{\textup{t} i}^{n}$ gives
  $\sum_{i\in\calV } m_i \sfE_{\textup{t} i}^{n}= \sum_{i\in\calV
    } m_i (\sfE_{\sfr i}^{n}+ \tfrac12\varrho_i^{n}
    \|\bsfv_i^{n}\|_{\ell^2}^2)$. Hence
  \[
    \sum_{i\in\calV } m_i\sfE_{\textup{tot} i}^{n,2}  =
    \sum_{i\in\calV } m_i (\sfE_{\sfr i}^{n,2} + \sfE_{\sfm i}^{n} ) = \sum_{i\in\calV } m_i\sfE_{\textup{tot} i}^{n}.
  \]
  Hence, the total energy is conserved. This proves that the scheme is
  conservative. This completes the proof.
\end{proof}

We now give some details on how to implement the second hyperbolic
stage using the dependent variables
$(\varrho,\bsfm\tr,\sfE_m,\sfE_r)\tr$ instead of using
$(\varrho,\bsfm\tr,\sfE\lo{t})$, \eqref{low_udate_hyp2} and
\eqref{change_var_wn2_to_un2}. Recalling that $\sfE_{\sft}\eqq \sfE_{\sfr}+\frac12
\varrho\|\bsfv\|_{\ell_2}^2$, we define $\bsfF_{ij}\up{L,n,2}\eqq
(\sfF_{\varrho ij}\up{L,2},(\bsfF_{\bsfm ij}\up{L,2})\tr,
\sfF_{\sfE_\sfm ij}\up{L,2}, \sfF_{\sfE_\sfr ij}\up{L,2})\tr$ and
$\bsfB_{i}\up{L,n,2}\eqq (\sfB_{\varrho i}\up{L,2},(\bsfB_{\bsfm
  i}\up{L,2})\tr, \sfB_{\sfE_\sfm i}\up{L,2},\sfB_{\sfE_\sfr
  i}\up{L,2})\tr$ where
\begin{subequations}\begin{align}
     & \left\{\begin{aligned}
\sfF_{\varrho ij}\up{L,n,2} ={}   & d^{\textup{L},n,2}_{ij}(\varrho_j^{n}-\varrho_i^{n}),   \\
\bsfF_{\bsfm ij}\up{L,n,2}   ={}  & \textstyle
-\bc_{ij}(p_{\sfr}(\sfE_{\sfr,j}^{n}) +p_{\sfr}(\sfE_{\sfr,i}^{n})) + d^{\textup{L},n,2}_{ij}(\bsfm_j^{n}-\bsfm_i^{n}), \\
\sfF_{\sfE_\sfm ij}\up{L,n,2} ={}  & 0,                   \\
\sfF_{\sfE_\sfr ij}\up{L,n,2}  ={} & -\bc_{ij}\SCAL \left(\bsfv_j^{n}
p_{\sfr}(\sfE_{\sfr,j}^{n}) + \bsfv_i^{n} p_{\sfr}(\sfE_{\sfr,i}^{n})\right)+ d^{\textup{L},n,2}_{ij}(\sfE_{\sft,j}^{n}-\sfE_{\sft,i}^{n})\end{aligned}\right.\\
     & \left\{\begin{aligned}
\bsfB_{\varrho i}\up{L,n,2} ={} & 0,  \\
\bsfB_{\bsfm i}\up{L,n,2} ={}   & 0,  \\
\bsfB_{\sfE_\sfm i}\up{L,n,2}={} & m_i\tfrac{\varrho_i^{n,2}}{2}(\|\bsfv_i^{n,2}\|_{\ell_2}^2-\|\bsfv_i^{n}\|_{\ell_2}^2),\\
\bsfB_{\sfE_\sfr i}\up{L,n,2}={} & -m_i\tfrac{\varrho_i^{n,2}}{2}(\|\bsfv_i^{n,2}\|_{\ell_2}^2-\|\bsfv_i^{n}\|_{\ell_2}^2).
\end{aligned}\right. \label{def_of_B2L}
  \end{align}\end{subequations}
Then the update~\eqref{change_var_wn2_to_un2} can be rewritten into the following equivalent form:
\begin{equation} \label{low_udate_hyp2_alternative}
  m_i\bsfu_i^{n,2}=m_i\bsfu_i^{n} +\dt \bsfF\up{L,n,2}_{i}+ \bsfB\up{L,n,2}_{i} ,
  \qquad \bsfF\up{L,n,2}_{i}\eqq\sum_{j\in\calV(i)}
  \bsfF\up{L,n,2}_{ij}.
\end{equation}
Notice that in \eqref{low_udate_hyp2_alternative} the velocity $\bsfv_i^{n,2}$ has to be updated before
updating $\sfE_{\sfm,i}^{n,2}$ and $\sfE_{\sfr,i}^{n,2}$ because the sources
$\bsfB_{\sfE_\sfm i}\up{L,n,2}$ and $\bsfB_{\sfE_\sfr i}\up{L,n,2}$ depend on $\bsfv_i^{n,2}$; see
\eqref{def_of_B2L}.

\subsection{Multiplicative vs. additive splitting}\label{sec:splitting}
The hyperbolic update, $\bu^{\textup{h},n}$, can be realized in two
different ways: either multiplicative or additive. We now discuss
these two options.  It turns out that the additive update is the most
robust method.

\subsubsection{Multiplicative splitting} The multiplicative version of the hyperbolic update,
$\bu^{\textup{h},n}$, consists of handling the hyperbolic stages 1 and
2 in a sequential way, where stage 1 is followed by stage 2 (or vice
versa).  This multiplicative process can be symbolically represented
by $\bu^n\mapsto \bu^{n,1}\mapsto \bu^{n,2}\qqe \bu^{\textup{h},n}$
where the time step $\dt$ for this explicit algorithm depends on the
state $\bu^n$.  The technical difficulty with this process is that the
time step $\dt$ for steps 1 and 2 must be identical and the second
hyperbolic step can be guaranteed to be IDP only if $\dt\le
\max_{i\in\calV}\frac{2}{m_i}\sum_{j\in\calV^*(i)}d^{\textup{L},n,2}_{ij}$
where $d^{\textup{L},n,2}_{ij}$ is defined in \eqref{def_dij_L2};
hence, the time step $\dt$ a priori depends on the result of stage 1.
This means that the time step is implicitly defined. Hence, the
multiplicative splitting does not have a guaranteed way to choose $\dt$
so the mapping $\bu^n\mapsto \bu^{n,1}\mapsto \bu^{n,2}\qqe
\bu^{\textup{h},n}$ is invariant-domain preserving.

\subsubsection{Additive splitting}
A better way to proceed, often advocated in the literature, is to make
the splitting additive. Given the state $\bu^n$, we define
$\bsfw^{n}\eqq (\varrho^n,(\bsfm^{n})\tr, \sfE\lo{t}^{n})\tr$, with
$\sfE\lo{t}^{n}\eqq \sfE_{\sfr}^{n} +\frac12
\varrho^{n}\|\bsfv^{n}\|_{\ell^2}^2$. Then for all $i\in\calV$ and all
$j\in\calV^*(i)$, we define
\begin{subequations}\label{definition_of_dt1_and_dt2}\begin{align}
  d^{\textup{L},n,1}_{ij}&\eqq \max\big(\wlambda_{\max}(\bn_{ij},\pi^1_{ij}(\bsfu_i^{n}),\pi^1_{ij}(\bsfu_j^{n})),
  \wlambda_{\max}(\bn_{ji},\pi^1_{ij}(\bsfu_j^{n}),\pi^1_{ij}(\bsfu_i^{n}))\big).\\
  d^{\textup{L},n,2}_{ij}&\eqq \max\big(\wmu_{\max}(\bn_{ij},\pi^2_{ij}(\bsfw_i^{n}),\pi^2_{ij}(\bsfw_j^{n})),
  \wmu_{\max}(\bn_{ji},\pi^2_{ij}(\bsfw_j^{n}),\pi^2_{ij}(\bsfw_i^{n}))\big),
\end{align}\end{subequations}
and introduce the two time steps
\begin{equation}
  \dt_1 \eqq  \max_{i\in\calV}\frac{2}{m_i}\sum_{j\in\calV^*(i)}d^{\textup{L},n,1}_{ij},\qquad
  \dt_2 \eqq  \max_{i\in\calV}\frac{2}{m_i}\sum_{j\in\calV^*(i)}d^{\textup{L},n,2}_{ij}.
\end{equation}
Notice that depending on the Mach number, the two time steps $\dt_1$ and $\dt_2$ may be
significantly different. We then define
\begin{equation}\label{def_of_dt_and_theta}
  \theta \eqq \frac{\dt_2}{\dt_1+\dt_2},\qquad\dt \eqq
\frac{\dt_1\dt_2}{\dt_1+\dt_2}.
\end{equation}
Notice that these definitions imply that $ \dt = \theta \dt_1$, $\dt = (1-\theta)\dt_2$, and
$\dt<\min(\dt_1,\dt_2)$. Using the notation
described in \eqref{low_udate_hyp1} and \eqref{low_udate_hyp2}, we define
the additive updates
\begin{subequations}\begin{align}
 \bu^{n,1} &=\bsfu^n +\frac{\dt}{m_i\theta}\bsfF^{\textup{L},n,1},\\
 \bu^{n,2} &=\bsfu^n +\frac{\dt}{m_i(1-\theta)}\bsfF^{\textup{L},n,2} + \bsfB^{\textup{L},n,2}_i,
 \end{align}
\end{subequations}
where $\bsfF^{\textup{L},n,1}$, $\bsfF^{\textup{L},n,2}$, and
$\bsfB^{\textup{L},n,2}$ are constructed using the same state initial $\bu^n$.
Notice that $\bu^{n,1}$ is the first hyperbolic update realized with the time step $\dt_1$,
and $\bu^{n,2}$ is the second hyperbolic update realized with the time step $\dt_2$.
The final hyperbolic update $\bu^{\textup{h},n}$ is defined by
\begin{equation}
  \bu^{\textup{h},n} \eqq \theta \bu^{n,1} + (1-\theta) \bu^{n,2}. \label{final_hyperbolic_update}
  \end{equation}

\begin{theorem}[$\bu^{n}\mapsto \bsfu^{h,n}$ is IDP\,\&\,conservative]%
\label{thm:hyperbolic_stage_is_IDP_and_conservative}  Assume that the space discretization meets the structural
  assumptions \eqref{ass1}--\eqref{ass5} from
  Assumption~\ref{Ass:assumption_on_space_discretization}.  Assume
  that $\bsfu_i^{n}$ is in $\calA(b)$ for all $i\in\calV$.  Assume
  that the time step is chosen so that $\dt\le
  \frac{\dt_1\dt_2}{\dt_1+\dt_2}$ with $\dt_1$ and $\dt_2$ defined in
  \eqref{definition_of_dt1_and_dt2}. Let
  $\bsfu^{h,n}$ be defined in~\eqref{final_hyperbolic_update}.  Then
\begin{enumerate}[font=\upshape,label=(\roman*)]
  \item $\bsfu_i^{\textup{h},n}$ is in
    $\calA(b)$ for all $i\in\calV$.
 \item The mapping
    $\bu^{n}\mapsto \bsfu^{\textup{h},n}$ is conservative.
    \end{enumerate}
\end{theorem}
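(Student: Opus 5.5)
The plan is to reduce the statement to Lemmas~\ref{lem:IDP_stage_1} and~\ref{lem:IDP_stage_2} applied with the enlarged time steps $\dt/\theta$ and $\dt/(1-\theta)$. Then we use that $\calA(b)$ is convex and that conservation is linear.

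First, the time steps. The definitions \eqref{def_of_dt_and_theta} give $\dt/\theta=\dt_1$ and $\dt/(1-\theta)=\dt_2$ when $\dt=\frac{\dt_1\dt_2}{\dt_1+\dt_2}$. If $\dt$ is strictly smaller, the effective steps $\dt/\theta\le\dt_1$ and $\dt/(1-\theta)\le\dt_2$ are still admissible, and $\theta$ is kept as defined. Hence $\bu^{n,1}$ is exactly the stage-1 update \eqref{low_udate_hyp1} performed from $\bsfu^n$ with a step satisfying the CFL condition of Lemma~\ref{lem:IDP_stage_1}. Likewise, $\bu^{n,2}$ is exactly the stage-2 update \eqref{low_udate_hyp2_alternative}, equivalently \eqref{low_udate_hyp2} followed by \eqref{change_var_wn2_to_un2}, with a step satisfying the condition of Lemma~\ref{lem:IDP_stage_2}. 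Both are built from the same state $\bsfu^n\in\calA(b)$. The two lemmas then give $\bsfu_i^{n,1}\in\calA(b)$ and $\bsfu_i^{n,2}\in\calA(b)$ for all $i$, and both maps are conservative.

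Second, convexity of $\calA(b)$. The conditions $\rho>0$, $1-b\rho>0$ and $\ER>0$ are linear, hence convex. The constraint $e\lo{cold}(\rho)<e(\bu)$ is the delicate part. I would rewrite it as $\rho e\lo{cold}(\rho)<\Emech-\frac{\|\bbm\|^2}{2\rho}$. The right-hand side is concave in $(\rho,\bbm,\Emech)$, since $\|\bbm\|^2/\rho$ is jointly convex. The set is then convex provided $\rho\mapsto\rho e\lo{cold}(\rho)$ is convex, or more generally provided the set $\{e>e\lo{cold}\}$ is convex. For the latter I would invoke the convexity result established in \cite{Clayton_Guermond_Popov_SIAM_SISC_2022} under the quasiconcavity assumption on $e\lo{cold}$, which the paper adopts from that reference. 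This is the main obstacle. If the cited result is insufficient, I would instead prove convexity directly: take two admissible states, use $\rho_\theta e_\theta\ge\theta\rho_1e_1+(1-\theta)\rho_2e_2$ (Jensen on the kinetic term), and compare with $\rho_\theta e\lo{cold}(\rho_\theta)$. With convexity in hand, $\bsfu_i^{\textup{h},n}=\theta\bsfu_i^{n,1}+(1-\theta)\bsfu_i^{n,2}$ is a convex combination of two states in $\calA(b)$, so it lies in $\calA(b)$. This proves (i).

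Third, conservation. The quantities $\sum_i m_i\varrho_i$, $\sum_i m_i\bsfm_i$ and $\sum_i m_i(\sfE_{\sfm i}+\sfE_{\sfr i})$ are linear in the state. By the lemmas, each takes the same value on $\bsfu^{n,1}$ and on $\bsfu^{n,2}$ as on $\bsfu^n$. Therefore each takes the value $\theta X^n+(1-\theta)X^n=X^n$ on $\bsfu^{\textup{h},n}$, which proves (ii).
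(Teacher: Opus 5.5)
Your proof follows the paper's own argument. Since $\dt/\theta\le\dt_1$ and $\dt/(1-\theta)\le\dt_2$, Lemmas~\ref{lem:IDP_stage_1} and~\ref{lem:IDP_stage_2} apply to $\bsfu^{n,1}$ and $\bsfu^{n,2}$. Then $\bsfu^{\textup{h},n}$ is a convex combination of two admissible, conservative updates, and conservation carries over by linearity.

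The one place you go beyond the paper is the convexity of $\calA(b)$, which the paper uses without comment. Your reduction is correct. On the slice $\bbm=\bzero$ the constraint reads $\Emech>\rho e\lo{cold}(\rho)$, so convexity of $\calA(b)$ is in fact equivalent to convexity of $\rho\mapsto\rho e\lo{cold}(\rho)$.

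However, quasiconcavity of $e\lo{cold}$ does not give this: $e\lo{cold}(\rho)=-\rho$ is concave, hence quasiconcave, while $-\rho^2$ is not convex. So neither the appeal to \cite{Clayton_Guermond_Popov_SIAM_SISC_2022} nor your Jensen fallback closes that step. Convexity of $\calA(b)$ has to be taken as a standing hypothesis, as the paper implicitly does. It holds trivially for $e\lo{cold}=0$, the case treated in the appendix.
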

\begin{proof}
  Notice that $\bu^n\mapsto \bu^{n,1}$ is conservative and the mapping
  is IDP because $\frac{\dt}{\theta} = \dt
  \frac{\dt_1+\dt_2}{\dt_2}= \dt_1$.  The same argument holds for
  $\bu^n\mapsto \bu^{n,2}$ because $\frac{\dt}{1-\theta} = \dt
  \frac{\dt_1+\dt_2}{\dt_1}= \dt_2$.  Observe finally that
  $\bsfu^{\textup{h},n}$ is a convex combination of two states that
  are IDP and conservative.
\end{proof}

All the tests reported in the paper are done with the additive update.

\subsection{Parabolic stage}\label{sec:approx_stage_3}

We finally focus our attention on the solution to the parabolic stage
\eqref{parabolic}.  Given the state
$\bu^{\textup{h},n}\eqq(\varrho^{\textup{h},n},\bsfm^{\textup{h},n},\sfE_{\sfm}^{\textup{h},n},\sfE_{\sfr}^{\textup{h},n})\tr$,
we define the specific internal energy $\sfe^{\textup{h},n}\eqq
\frac{1}{\varrho^{\textup{h},n}}
(\sfE_{\sfm}^{\textup{h},n}-\frac12\varrho^{\textup{h},n}\|\bsfv^{\textup{h},n}\|_{\ell^2}^2)$.
Recalling the relation between the temperature and the internal energy
\eqref{def_of_temperature} given by the oracle, we set
$\sfT^{\textup{h},n}\eqq
T(\varrho^{\textup{h},n},\sfe^{\textup{h},n})$. To account for the
dependency of the other coefficients with respect to the temperature,
we use a standard linearization process; see \eg
\cite[Eqs.~(18)-(19)]{knoll_rider_Olson_1999},
\cite[Eqs.~(42)-(43)]{knoll_chacon_margolin_Mousseau_JCP_2003}.  We
denote by $\sfT^*$ a positive and yet to be computed estimation of
$\sfT^{n+1}$, and we set $\sigmaa^{\textup{h},n}(\sfT^*)\eqq
\sigmaa(\varrho^{\textup{h},n},\sfT^{*})$,
$\cv^{\textup{h},n}(\sfT^*)\eqq \cv(\varrho^{\textup{h},n},\sfT^{*})$
where the average heat capacity at constant volume $\cv$ is define in
\eqref{specific_heat_capacity}.  The coefficients of the matrix
$\polK(\rho^{\textup{h},n},T^{*})$ introduced
in~Assumption~\ref{Ass:assumption_on_space_discretization}\eqref{ass5}
are denoted $\sfk_{ik}^{\textup{h},n}(\sfT^*)$.

 Let $\bsfu^{n+1}\eqq
(\varrho^{n+1},\bsfm^{n+1},\sfE_{\sfm}^{n+1},\sfE_{\sfr}^{n+1})\tr$
 be the low-order parabolic update. Given $\sfT^*>0$ (yet to be clearly defined),
 we update $(\sfT^{n+1})_{i\in\calV}$ and $(\sfE_r^{n+1})_{i\in\calV}$
 by solving the following discrete counterpart
of \eqref{parabolic}:
\begin{subequations}\label{parabolic_update}
  \begin{align}
    \varrho_i^{\textup{h},n}(\cv^{\textup{h},n}(\sfT_i^*) \sfT_i^{n+1}-\cv^{\textup{h},n}(\sfT_i^{\textup{h},n})\sfT_i^{\textup{h},n})
    = - \dt \sigmaa^{\textup{h},n}(\sfT^*)_i c\, (a\lorr[\sfT_i^{*}]^3\sfT_i^{n+1} -
    \sfE_{\sfr,i}^{n+1}),&\label{EM:parabolic_update}                                                                          \\
    m_i(\sfE_{\sfr,i}^{n+1}-\sfE_{\sfr,i}^{\textup{h},n})+\dt(\polK(\rho^{\textup{h},n},T^*)\sfE^{n+1})_i
 = \dt m_i\sigmaa^{\textup{h},n}(\sfT^*_i) c\, (a\lorr [\sfT_i^{*}]^3\sfT_i^{n+1} -
    \sfE_{\sfr,i}^{n+1}).\hspace{-.5cm}& \label{ER:parabolic_update}
  \end{align}\end{subequations}
Finally, recalling \eqref{specific_heat_capacity} we set $\sfe^{n+1} \eqq \cv(\varrho^{\textup{h},n},\sfT^*)\sfT^{n+1}
+e\lo{cold}(\varrho^{\textup{h},n})$.
The mechanical energy 
and the other components of the parabolic update are obtained by setting%
\begin{align}\label{end_of_parabolic_update}
  \varrho^{n+1}\eqq \varrho^{\textup{h},n},\quad
\bsfm^{n+1}\eqq \bsfm^{\textup{h},n},\quad
\sfE_{\sfm}^{n+1}&\eqq \varrho^{n+1} (\sfe^{n+1} + \tfrac12
\|\bsfv^{n+1}\|_{\ell^2}^2),\\
\text{with}\quad \sfe^{n+1} &\eqq \cv(\varrho^{n+1},\sfT^*)\sfT^{n+1}
+e\lo{cold}(\varrho^{\textup{h},n}).\label{update_of_the_internal_energy}
\end{align}

\begin{lemma}[$\bu^{\textup{h},n}\mapsto \bsfu^{n+1}$ is IDP\,\&\,conservative]\label{lem:IDP_stage_3}
  Assume that the space discretization meets the structural
  assumptions \eqref{ass1}--\eqref{ass5} from
  Assumption~\ref{Ass:assumption_on_space_discretization}. Assume that
  $\sfT^*_i>0$ for all $i\in\calV$.  Then
\begin{enumerate}[font=\upshape,label=(\roman*)]
  \item 
  The system
  \eqref{parabolic_update} is linear and has a unique solution.
  \item The
  low-order parabolic update $\bsfu^{\textup{h},n}\mapsto \bsfu^{n+1}$ is IDP
  for all $\dt>0$.
\item The scheme $\bsfu^{\textup{h},n}\mapsto \bsfu^{n+1}$ is conservative.
  \end{enumerate}
\end{lemma}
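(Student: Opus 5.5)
The plan is to eliminate $\sfT_i^{n+1}$ locally using \eqref{EM:parabolic_update}, which reduces \eqref{parabolic_update} to a linear system in $\sfE_\sfr^{n+1}$ alone. Since $\sfT^*$, $\varrho^{\textup{h},n}$ and all coefficients are frozen, \eqref{EM:parabolic_update} is affine in $(\sfT_i^{n+1},\sfE_{\sfr,i}^{n+1})$. Set $\alpha_i\eqq \varrho_i^{\textup{h},n}\cv^{\textup{h},n}(\sfT_i^*)>0$, $\beta_i\eqq \dt\,\sigmaa^{\textup{h},n}(\sfT^*_i)c\,a\lorr[\sfT_i^*]^3\ge0$, $\gamma_i\eqq\dt\,\sigmaa^{\textup{h},n}(\sfT^*_i)c\ge0$, and $\varepsilon_i^{\textup{h}}\eqq\varrho_i^{\textup{h},n}\cv^{\textup{h},n}(\sfT_i^{\textup{h},n})\sfT_i^{\textup{h},n}\ge0$. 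Then
\[
\sfT_i^{n+1}=\frac{\varepsilon_i^{\textup{h}}+\gamma_i\sfE_{\sfr,i}^{n+1}}{\alpha_i+\beta_i},\qquad
a\lorr[\sfT_i^*]^3\sfT_i^{n+1}-\sfE_{\sfr,i}^{n+1}
=\frac{a\lorr[\sfT_i^*]^3\varepsilon_i^{\textup{h}}-\alpha_i\sfE_{\sfr,i}^{n+1}}{\alpha_i+\beta_i}.
\]
Substituting into \eqref{ER:parabolic_update} gives $(\polM\upL(\polI+\polD)+\dt\polK)\sfE_\sfr^{n+1}=\polM\upL\sfE_\sfr^{\textup{h},n}+\bsfs$, where $\polD$ is diagonal with nonnegative entries $\gamma_i\alpha_i/(\alpha_i+\beta_i)$ and $s_i\eqq m_i\gamma_i a\lorr[\sfT_i^*]^3\varepsilon_i^{\textup{h}}/(\alpha_i+\beta_i)\ge0$.

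For (i) and the positivity part of (ii), I will show the matrix $\polA\eqq\polM\upL(\polI+\polD)+\dt\polK$ is an M-matrix. Its off-diagonal entries $\dt\sfk_{ij}$ are $\le0$ by \eqref{ass:on_kij}. The row-sum property $\sum_j\sfk_{ij}=0$ gives $\sum_j\polA_{ij}=m_i(1+\polD_{ii})\ge m_i>0$, so $\polA$ is strictly diagonally dominant with positive diagonal. Hence $\polA$ is invertible with $\polA^{-1}\ge0$ entrywise; this gives unique solvability of the full linear system (i), since $\sfT^{n+1}$ is then determined. Alternatively one could argue via the transpose, using the skew-symmetry column sums, but the row sums suffice. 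Because the right-hand side is strictly positive (given $\sfE_{\sfr,i}^{\textup{h},n}>0$, which holds by Theorem~\ref{thm:hyperbolic_stage_is_IDP_and_conservative}, and $s_i\ge0$), we get $\sfE_{\sfr}^{n+1}>0$, with strictness following from a minimum-index argument at the smallest component. Then $\sfT_i^{n+1}>0$ follows from the formula above, since $\gamma_i\sfE^{n+1}_{\sfr,i}\ge0$ and $\varepsilon^{\textup{h}}_i>0$ when $\sfT^{\textup{h},n}>0$; more carefully, $\sfT^{n+1}_i\ge0$ at least. Consequently $\sfe^{n+1}_i-e\lo{cold}(\varrho_i^{n+1})=\cv\sfT^{n+1}_i$ is positive (or I will need $\sfT^{\textup{h},n}_i>0$, which holds since $\bsfu^{\textup{h},n}\in\calA(b)$ forces $e>e\lo{cold}$ and hence $\cv T>0$). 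Density and momentum are unchanged, so $\bsfu^{n+1}_i\in\calA(b)$ for every $\dt>0$.

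For (iii), mass and momentum are untouched. For the total energy, I multiply \eqref{EM:parabolic_update} by $m_i$ and add it to \eqref{ER:parabolic_update}; the source terms cancel exactly. The change in $m_i\varrho_i\sfe_i$ equals $m_i\alpha_i\sfT_i^{n+1}-m_i\varepsilon^{\textup{h}}_i$, which matches the update \eqref{update_of_the_internal_energy} because $e\lo{cold}$ is unchanged, and the kinetic energy is unchanged as well. Summing over $i$ then leaves $\dt\sum_i(\polK\sfE^{n+1})_i=\dt\sum_j\sfE_j\sum_i\sfk_{ij}$, which vanishes using $\sfk_{ij}=-\sfk_{ji}$ together with $\sum_i\sfk_{ij}=-\sum_i\sfk_{ji}=0$. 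The main obstacle I expect is the strict positivity and M-matrix step: I must check that $\polK$'s sign structure combined with the diagonal mass matrix yields a nonnegative inverse.
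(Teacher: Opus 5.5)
Your proposal is correct and follows essentially the same route as the paper. You eliminate $\sfT_i^{n+1}$ via \eqref{EM:parabolic_update}, show the reduced matrix is an $M$-matrix (you by strict row diagonal dominance, the paper by citing a lemma), deduce positivity of $\sfE_\sfr^{n+1}$ and then of $\sfT^{n+1}$, and obtain conservation by adding the two equations and summing over $i$. One small remark: the paper's proof uses the symmetry $\sfk_{ij}=\sfk_{ji}$, and the stated skew-symmetry combined with $\sfk_{ij}\le 0$ would essentially force $\polK=0$; the column sums vanish in either case, so your conservation step is unaffected.
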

\begin{proof}
  Re-arranging the terms in \eqref{EM:parabolic_update} yields
  \begin{equation}
    \sfT_i^{n+1} = \frac{\varrho_i^{\textup{h},n}\cv^{\textup{h},n}(\sfT_i^{\textup{h},n}) \sfT_i^{\textup{h},n}
      +  \dt\sigmaa^{\textup{h},n}(\sfT^*_i) c\, \sfE_{\sfr,i}^{n+1}}{\varrho_i^{\textup{h},n}\cv^{\textup{h},n}(\sfT_i^*)
      + \dt \sigmaa^{\textup{h},n}(\sfT^*_i) c\, a\lorr\,[\sfT_i^{*}]^3}, \label{T:prof:lem:IDP_stage_3}
  \end{equation}
  which proves that the dependency $\sfE_{\sfr}^{n+1}\mapsto \sfT^{n+1}$
  is affine. Hence, the system
  \eqref{parabolic_update} is linear.
  The identity \eqref{T:prof:lem:IDP_stage_3} proves that the new
  temperature $\sfT_i^{n+1}$ is positive once we establish that
  $\sfE_{\sfr}^{n+1}\ge 0$, which we now prove.  Substituting
  \eqref{T:prof:lem:IDP_stage_3} into \eqref{ER:parabolic_update}
  gives
  \begin{multline}
    m_i\bigg(1 + \frac{\dt\sigmaa^{\textup{h},n}(\sfT^*_i) c
      \varrho_i^{\textup{h},n}\cv^{\textup{h},n}(\sfT^*_i)}{\varrho_i^{\textup{h},n}\cv^{\textup{h},n} + \dt
      \sigmaa^{\textup{h},n}(\sfT^*_i) c a\lorr[\sfT_i^{*}]^3}\bigg)
    \sfE_{\sfr,i}^{n+1}+ \dt\sum_{j\in\calV(i)}
    \sfk_{ij}^{\textup{h},n}(\sfT^*)\sfE_{\sfr,j}^{n+1} = \\ m_i
    \underbrace{\bigg(\sfE_{\sfr,i}^{\textup{h},n} +
      \frac{\dt\sigmaa^{\textup{h},n}(\sfT^*_i)c\,\varrho_i^{\textup{h},n} \cv^{\textup{h},n}(\sfT_i^{\textup{h},n})
      }{\varrho_i^{\textup{h},n}\cv^{\textup{h},n}(\sfT_i^{\textup{h},n}) + \dt \sigmaa^{\textup{h},n}(\sfT^*_i) c
        a\lorr[\sfT_i^{*}]^3} a\lorr
           [\sfT_i^{*}]^3\sfT_i^{\textup{h},n}\bigg)}_{>0}. \label{ER:proof:lem:IDP_stage_3}
  \end{multline}
  Let $\polG$ be the $I\CROSS I$ matrix with entries $\polG_{ij} \eqq
  m_i(1+\frac{\dt m_i \sigmaa^{\textup{h},n}(\sfT^*_i) c
    \varrho_i^{\textup{h},n}\cv^{\textup{h},n}(\sfT_i^*)}{\varrho_i^{\textup{h},n}\cv^{\textup{h},n}(\sfT_i^{\textup{h},n}) + \dt
    \sigmaa^{\textup{h},n}(\sfT^*_i) c a\lorr[\sfT_i^{*}]^3}) \delta_{ij}
  +\dt \sfk_{ij}^{\textup{h},n}$. Since $\sfk_{ij}^{\textup{h},n}\le 0$ for all
  $j\in\calV^*(i)$, we conclude that $\polG$ is a $Z$-matrix; see
  \eqref{ass:on_kij}.  Since $\sum_{j\in\calV(i)} \sfk_{ij}^{\textup{h},n}=0$,
  we conclude that $\polG$ is an $M$-matrix; see \eg
  \citep[Lem.~28.17]{ern_guermond_volII_2021}.  Hence, $\polG$ is
  invertible and the system
  \eqref{parabolic_update} has a unique solution.

  Since the inverse of $\polG$ has nonnegative entries and the
  right-hand side in \eqref{ER:proof:lem:IDP_stage_3} is positive, we
  infer that $\sfE_{\sfr,i}^{n+1}>0$ for all $i\in\calV$.  More
  precisely, we have
  \[
    \min_{i\in\calV} \sfE_{\sfr,i}^{n+1} \ge\min_{i\in\calV} \tfrac{\sfE_{\sfr,i}^{\textup{h},n} +
      \frac{\dt\sigmaa^{\textup{h},n}(\sfT^*_i)c\varrho_i^{\textup{h},n} \cv^{\textup{h},n}(\sfT_i^{\textup{h},n}) }{\varrho_i^{\textup{h},n}\cv^{\textup{h},n}(\sfT_i^{\textup{h},n})
        + \dt \sigmaa^{\textup{h},n}(\sfT^*_i) c a\lorr[\sfT_i^{*}]^3} a\lorr [\sfT^*_i]^3 \sfT_i^{\textup{h},n}}{1
      + \frac{\dt\sigmaa^{\textup{h},n}(\sfT^*_i) c \varrho_i^{\textup{h},n}\cv^{\textup{h},n}(\sfT_i^*)}{\varrho_i^{\textup{h},n}\cv^{\textup{h},n}(\sfT_i^{\textup{h},n})
        + \dt \sigmaa^{\textup{h},n}(\sfT^*_i) c a\lorr[\sfT_i^{*}]^3}}\ge
    \min_{i\in\calV} \big(\sfE_{\sfr,i}^{\textup{h},n}, a\lorr [\sfT_i^{*}]^3\sfT^{\textup{h},n}_i\big).
  \]
  Then \eqref{T:prof:lem:IDP_stage_3} implies that $\sfT_{i}^{n+1}>0$
  for all $i\in\calV$. Using the relation
  \eqref{update_of_the_internal_energy}, \ie $\sfe_i^{n+1}
  =\cv^{\textup{h},n}(\varrho_i^{n+1}, \sfT_i^{n+1}) +
  e\lo{cold}(\varrho_i^{n+1})$, we infer that the internal
  energy is above the cold curve.  In conclusion, the new state
  $\bsfu_i^{n+1}$ is in $\calA(b)$ for all $i\in\calV$. This proves
  that the low-order parabolic stage $\bsfu^{\textup{h},n}\mapsto
  \bsfu^{n+1}$ is IDP.

  Summing \eqref{EM:parabolic_update} and \eqref{ER:parabolic_update}, summing over $i\in\calV$, and using
  $\sfk^{\textup{h},n}_{ij}(\sfT^*)=\sfk^{\textup{h},n}_{ji}(\sfT^*)$ together with $\sum_{j\in\calV(i)} \sfk^{\textup{h},n}_{ij}(\sfT^*)=0$ from \eqref{ass:on_kij},
  we obtain
  \[
    \sum_{i\in\calV} m_i\big(\sfe_i^{n+1}+\sfE\lo{r}^{n+1}\big)=\sum_{i\in\calV} m_i\big(\sfe_i^{\textup{h},n}+\sfE\lo{r}^{\textup{h},n}\big).
  \]
  Since the density and the momentum are unchanged in the parabolic stage, \ie $\frac12\varrho_i^{n+1}
    \|\bsfv_i^{n+1}\|_{\ell^2}^2 =\frac12\varrho_i^{\textup{h},n}
    \|\bsfv_i^{\textup{h},n}\|_{\ell^2}^2$, this implies that
  the total energy is conserved.  This completes the proof.
\end{proof}

Now the key question that we have to address is how $\sfT^*_i$ should
be estimated. At low the Mach numbers, it is well known that just
using $\sfT^*_i=\sfT^{n}$ or $\sfT^*_i=\sfT^{\textup{h},n}$ is
sufficient in the sense that this simple choice does not restrict too
much the time step. But this is no longer the case at large Mach
numbers. Hence, similarly to
\citep{knoll_rider_Olson_1999,knoll_chacon_margolin_Mousseau_JCP_2003}
we propose to use an iterative process to estimate $\sfT^*$ that is
robust with respect to the Mach number. But contrary to what is
usually done in the literature, we do not solve the coupled problem
\eqref{parabolic_update} using a Newton-Krylov method. We instead have
observed that using a fixed-point Picard iteration method is
sufficient to solve \eqref{parabolic_update}, even at very high Mach
numbers. The algorithm that we propose proceeds as follows: \textup{(i)} initialize
the process with $\sfT^*_i=\sfT^n_i$; (Do not use $\sfT^*_i=\sfT^{\textup{h},n}_i$.
Robustness is lost by using
$\sfT^*_i=\sfT^{\textup{h},n}_i$ since at steady state $\sfT^{n+1}_i=\sfT^n_i\ne
\sfT^{\textup{h},n}_i$.); \textup{(ii)} Compute the update $\sfE^{n+1}$ by solving
\eqref{ER:proof:lem:IDP_stage_3}; \textup{(iii)} Then update $\sfT^*_i$ for all $i\in\calV$
by solving the nonlinear equation
\begin{equation}
 m_i\varrho_i^{\textup{h},n}\cv^{\textup{h},n}(\sfT_i^{*})(\sfT_i^{*}-\sfT_i^{\textup{h},n})
    = - \dt m_i\sigmaa^{\textup{h},n}(\sfT^*)_i c\, (a\lorr[\sfT_i^{*}]^4 -
    \sfE_{\sfr,i}^{n+1}).\label{Newton_for_T*}
\end{equation}
This can be done with Newton's algorithm using the current
value of $\sfT^*_i$ as initial guess;
\textup{(iv)} Repeat steps
\textup{(ii)}-\textup{(iii)} until some tolerance is achieved.
Finally,  update $T_i^{n+1}$ for all
$i\in\calV$ using \eqref{T:prof:lem:IDP_stage_3} to ensure conservation of the total energy.
A detailed version of the algorithm is shown in
Algorithm~\ref{alg:estimation_of_Tstar}.

\begin{algorithm}[!htb]
  \begin{algorithmic}
    \Require $\bsfu^{\textup{h},n}$, $\sfT^n$
    \State Initialize: $\sfT^*=\sfT^n$; $\sigma\lo{a,ref}$; $\sfE\lo{r,ref}$;   $\epsilon$; 
    \State err\eqq $10^{30}$;
    $\epsilon\up{N} \eqq \epsilon\CROSS \dt\CROSS \sigma\lo{a,ref}\CROSS
    c\CROSS \sfE\lo{r,ref}$
    \While{err > $\epsilon$}
    \State Update $\sfE\lo{r}^{n+1}$ by solving
    \eqref{ER:proof:lem:IDP_stage_3}
    \State $\sfT^{*,\textup{old}}=\sfT^{*}$
    \For{$i\in\calV$}
      \State Let $\sfT^{*}_i$ solve \eqref{Newton_for_T*} up to residual tolerance $\epsilon\up{N}$ (Newton algorithm)
    \EndFor
    \State err = $\|\sfT^*-\sfT^{*,\textup{old}}\|_{\ell^1}/\|\sfT^{*,\textup{old}}\|_{\ell^1}$
    \EndWhile
    \State Update $\sfT^{n+1}$ using \eqref{T:prof:lem:IDP_stage_3} with $\sfT^{*,\textup{old}}$.
    \State Update $\varrho^{n+1}$, $\bsfm^{n+1}$ and $\sfE\lo{m}$ using \eqref{end_of_parabolic_update}
	\end{algorithmic}
\caption{Parabolic update $(\sfT^{n+1},\sfE^{n+1})$}\label{alg:estimation_of_Tstar}
\end{algorithm}

\subsection{Conclusion}

Combining Theorem~\ref{thm:hyperbolic_stage_is_IDP_and_conservative}
with Lemma~\ref{lem:IDP_stage_3} we have proved the following result.

\begin{theorem}\label{Thm:low_order_IDP}
Assume that the space discretization meets the structural assumptions
\eqref{ass1}--\eqref{ass5} from
Assumption~\ref{Ass:assumption_on_space_discretization}.  Assume that
$\bsfu_i^{n}$ is in $\calA(b)$ for all $i\in\calV$. Assume that the time step is
chosen so that $\dt\le \frac{\dt_1\dt_2}{\dt_1+\dt_2}$ with $\dt_1$
and $\dt_2$ defined in \eqref{definition_of_dt1_and_dt2}. Let $\bsfu^{h,n}$
be defined in~\eqref{final_hyperbolic_update}.  Let $\epsilon>0$ and $\bu^{n+1}$ be
defined by Algorithm~\ref{alg:estimation_of_Tstar}
using \eqref{alg:estimation_of_Tstar}-\eqref{ER:proof:lem:IDP_stage_3}-\eqref{Newton_for_T*}.
  Then the
two stage algorithm $\bu^{n}\mapsto
\bu^{\textup{h},n}\mapsto\bu^{n+1}$ has the following properties for all $\epsilon>0$:
\begin{enumerate}[font=\upshape,label=(\roman*)]
\item It is IDP.
\item It is conservative.
\end{enumerate}
\end{theorem}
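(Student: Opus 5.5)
The proof will chain Theorem~\ref{thm:hyperbolic_stage_is_IDP_and_conservative} with Lemma~\ref{lem:IDP_stage_3}. The only extra work is to check that the output of Algorithm~\ref{alg:estimation_of_Tstar} really is an instance of the update \eqref{parabolic_update}--\eqref{update_of_the_internal_energy} with a strictly positive $\sfT^*$, whatever the tolerance $\epsilon$.

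\emph{Hyperbolic stage.} Under the time-step assumption $\dt\le \frac{\dt_1\dt_2}{\dt_1+\dt_2}$ and $\bsfu_i^n\in\calA(b)$, Theorem~\ref{thm:hyperbolic_stage_is_IDP_and_conservative} gives $\bsfu_i^{\textup{h},n}\in\calA(b)$ for all $i\in\calV$. It also gives that $\bu^n\mapsto\bu^{\textup{h},n}$ conserves $\sum_i m_i\varrho_i$, $\sum_i m_i\bsfm_i$ and $\sum_i m_i\sfE_{\textup{tot}i}$. In particular $\sfT_i^{\textup{h},n}\ge0$, $\sfE_{\sfr,i}^{\textup{h},n}>0$ and $\varrho_i^{\textup{h},n}>0$, which is what the parabolic stage needs as input.

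\emph{Positivity of every iterate $\sfT^*$.} This is the main obstacle, and I would argue it by induction on the Picard iterations. The initial value is $\sfT^*=\sfT^n>0$, which holds because $\bsfu^n\in\calA(b)$. Now suppose the current $\sfT^*>0$. By Lemma~\ref{lem:IDP_stage_3}(i)--(ii) the linear solve \eqref{ER:proof:lem:IDP_stage_3} is uniquely solvable and gives $\sfE_{\sfr}^{n+1}>0$.

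The Newton step \eqref{Newton_for_T*} is then a scalar equation $f(\sfT)=0$ with
\[
f(\sfT)=\varrho_i^{\textup{h},n}\cv(\sfT)(\sfT-\sfT_i^{\textup{h},n})+\dt\sigmaa c(a\lorr \sfT^4-\sfE_{\sfr,i}^{n+1}).
\]
At $\sfT=0$ we have $f(0)=-\varrho\cv(0)\sfT^{\textup{h},n}_i-\dt\sigmaa c\,\sfE_{\sfr,i}^{n+1}<0$, at least when $\sigmaa>0$; when $\sigmaa=0$ the root is $\sfT^{\textup{h},n}_i$ itself. For large $\sfT$ we have $f(\sfT)>0$. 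The root is therefore positive. I would add the standing requirement that the Newton iterate stays positive, which can be enforced by safeguarding or bisection, or which follows from convexity of $f$ when $\cv$ is constant.

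Positivity matters for the final state. The final update of $\sfT^{n+1}$ uses $\sfT^{*,\textup{old}}$, which is the value from which the last $\sfE_{\sfr}^{n+1}$ was computed. Hence the pair $(\sfT^{n+1},\sfE_\sfr^{n+1})$ is exactly the solution of \eqref{parabolic_update} with $\sfT^*=\sfT^{*,\textup{old}}>0$. This holds regardless of how large the residual $\epsilon$ is at exit, and that is why the result is valid for all $\epsilon>0$.

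\emph{Conclusion.} Since the assumptions of Lemma~\ref{lem:IDP_stage_3} hold with $\sfT^*=\sfT^{*,\textup{old}}$, parts (ii)--(iii) of that lemma give $\bsfu_i^{n+1}\in\calA(b)$ and conservation of mass, momentum and total energy across $\bu^{\textup{h},n}\mapsto\bu^{n+1}$. Composing the two maps, each of which leaves $\calA(b)$ invariant and preserves the three sums in Definition~\ref{def:conservation}, proves (i) and (ii) for $\bu^n\mapsto\bu^{n+1}$.
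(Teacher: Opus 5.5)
Your proposal is correct and follows the paper's proof. Like the paper, you compose Theorem~\ref{thm:hyperbolic_stage_is_IDP_and_conservative} with Lemma~\ref{lem:IDP_stage_3}, and the key observation is the same: $\sfT^{n+1}$ (and hence $\sfe^{n+1}$) are computed with $\sfT^{*,\textup{old}}$, the value used for the last $\sfE_{\sfr}^{n+1}$, so the output solves \eqref{parabolic_update} exactly for every $\epsilon$. The paper re-derives the energy balance with $\sfT^{*,\textup{old}}$ instead of citing part (iii) of the lemma, and states that balance under the proviso of no boundary energy influx. Your induction showing that every Picard/Newton iterate stays positive is a worthwhile addition, since the paper takes the lemma's hypothesis $\sfT^*_i>0$ for granted; as you note, though, it relies on a safeguarded Newton solve that is not among the stated assumptions.
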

\begin{proof}
  Owing to the time step restriction $\dt\le
  \frac{\dt_1\dt_2}{\dt_1+\dt_2}$ and
  Theorem~\ref{thm:hyperbolic_stage_is_IDP_and_conservative}, the
  mapping $\bu^{n}\mapsto \bu^{\textup{h},n}$ is IDP (in addition to
  being conservative). Moreover, we have established in
  Lemma~\ref{lem:IDP_stage_3} that
  $\bu^{\textup{h},n}\mapsto\bu^{n+1}$ is IDP. Let now prove that the
  mapping $\bu^{\textup{h},n}\mapsto\bu^{n+1}$ is also
  conservative. As the mass and momentum are unchanged in the
  parabolic step, we just have to prove that $\sum_{i\in\calV}m_i
  \sfE_{\textup{tot},i}^{n+1}=
  \sum_{i\in\calV}m_i\sfE_{\textup{tot},i}^{\textup{h},n}$ if there is
  no energy influx at the boundary. Let $\sfT_i^{*,\textup{old}}$ be the
  penultimate temperature defined in
  Algorithm~\ref{alg:estimation_of_Tstar}. Adding
  $m_i\CROSS$\eqref{EM:parabolic_update} and
  \eqref{ER:parabolic_update}, using that $\varrho^{n+1}\eqq
  \varrho^{\textup{h},n}$, $\bsfv_i^{n+1}\eqq\bsfv_i^{\textup{h},n}$,
  and using the definition of $\sfE_{\textup{m},i}^{n+1}$ in
  \eqref{end_of_parabolic_update}, we obtain
\begin{align*}
 -\dt(\polK(\rho^{\textup{h},n},&T^{*,\textup{old}})\sfE^{n+1})_i= m_i\big(\sfE_{\sfr,i}^{n+1}\!+\varrho_i^{\textup{h},n}\cv^{\textup{h},n}(\sfT_i^{*,\textup{old}}) \sfT_i^{n+1}
 \\
 & \qquad -\sfE_{\sfr,i}^{\textup{h},n}-\varrho_i^{\textup{h},n}\cv^{\textup{h},n}(\sfT_i^{\textup{h},n})\sfT_i^{\textup{h},n}\big)\\
 & =  m_i\Big(\sfE_{\sfr,i}^{n+1}\!+\varrho_i^{n+1}\cv(\varrho_i^{n+1},\sfT_i^{*,\textup{old}}) \sfT_i^{n+1}
 + e_{\textup{cold}}(\varrho_i^{n+1}) + \tfrac12 \|\bsfv_i^{n+1}\|_{\ell}^2
 \\
 & \qquad -\big(\sfE_{\sfr,i}^{\textup{h},n}+\varrho_i^{\textup{h},n}\cv(\varrho_i^{\textup{h},n},\sfT_i^{\textup{h},n})\sfT_i^{\textup{h},n}
 + e_{\textup{cold}}(\varrho_i^{\textup{h},n}) + \tfrac12 \|\bsfv_i^{\textup{h},n}\|_{\ell}^2\big)\Big)\\
 & = m_i(\sfE_{\textup{r},i}^{n+1}+\sfE_{\textup{m},i}^{n+1}-\sfE_{\textup{tot},i}^{\textup{h},n}-\sfE_{\textup{m},i}^{\textup{h},n})
 =m_i(\sfE_{\textup{tot},i}^{n+1}-\sfE_{\textup{tot},i}^{\textup{h},n})
\end{align*}
Hence, if $\sum_{i\in\calV}m_i (\polK(\rho^{\textup{h},n},T^{*,\textup{old}})\sfE^{n+1})_i = 0$, \ie there is no energy influx at the boundary,
the total energy is conserved $\sum_{i\in\calV}m_i \sfE_{\textup{tot},i}^{n+1}= \sum_{i\in\calV}m_i\sfE_{\textup{tot},i}^{\textup{h},n}$ thereby proving that
$\sum_{i\in\calV}m_i \sfE_{\textup{tot},i}^{n+1}= \sum_{i\in\calV}m_i\sfE_{\textup{tot},i}^{n}$.  
\end{proof}


\section{Numerical results}
We now verify that the first-order, conservative IDP
approximation of the model~\eqref{eq:radiation} presented in the paper performs
as advertised.

\subsection{Preliminaries}\label{sec:numerical_prelim}
The numerical tests are performed with two separate codes to verify
reproducibility The first code, henceforth called \JLGcode, is written
in Fortran 95/2003 and does not use any particular software.  It uses
meshes composed of simplices (triangles in 2D and tetraedron in 3D).
The second is a high-performance code, \texttt{ryujin} (henceforth
referred to as \ETcode), see \citep{ryujin-2021-1,ryujin-2021-3},
built upon the \texttt{deal.II} finite element
library~\citep{dealII95}. It is written in C++ and uses cuboids
(quadrangles in 2D and hexahedrons in 3D). Both codes reproduce the
algorithm described in the paper using the additive splitting
described in
\eqref{definition_of_dt1_and_dt2}--\eqref{final_hyperbolic_update} for
the explicit hyperbolic stage followed by the implicit parabolic stage
described in Algorithm~\ref{alg:estimation_of_Tstar}.  The space
approximation in \JLGcode is done with continuous $\polP_1$
finite elements.  The space approximation in \ETcode is done
with continuous $\polQ_1$ finite elements.
The time step $\dt$ is systematically computed in both codes with
\eqref{def_of_dt_and_theta} using the definition
\begin{equation}
  \dt  \eqq \cfl
\frac{\dt_1\dt_2}{\dt_1+\dt_2}.
  \end{equation}
where $\cfl\in (0,1]$ is the user-dependent Courant–Friedrichs–Lewy
  number.  Unless specified otherwise the relative tolerance in
  Algorithm~\ref{alg:estimation_of_Tstar} is set to $\epsilon=10^{-5}$.
  
In all the tests for which an analytical solutions exists, we compute the error at time $t$
as follows:
\begin{multline}\label{def_l1_cumulative_eror}
  \text{err}(t) \eqq
  \tfrac{\|\rho_h(\cdot,t)-\rho(\cdot,t)\|_{L^1(\Dom)}}{\|\rho(\cdot,T)\|_{L^1(\Dom)}}
  + \tfrac{\|\bbm_h(\cdot,t)-\bbm(\cdot,t)\|_{\bL^1(\Dom)}}{\|\bbm(\cdot,T)\|_{\bL^1(\Dom)}}\\
  + \tfrac{\|E_{\textup{m}h}(\cdot,t)-E\lo{m}(\cdot,t)\|_{L^1(\Dom)}}{\|E_{\textup{m}}(\cdot,t)\|_{L^1(\Dom)}}
  + \tfrac{\|E_{\textup{r}h}(\cdot,t)-E\lo{m}(\cdot,t)\|_{L^1(\Dom)}}{\|E_{\textup{r}}(\cdot,t)\|_{L^1(\Dom)}},
  \end{multline}
where $\rho_h$, $\bbm_h$, $E_{\textup{m}h}$, and $E_{\textup{r}h}$ are
the approximate density, momentum, mechanical energy, and radiation
energy, and $\rho$, $\bbm$, $E_{\textup{m}}$, and $E_{\textup{r}}$ are
the exact density, momentum, mechanical energy, and radiation energy.

\subsection{Units}\label{sec:units}
Although the system \eqref{eq:radiation} can be made non-dimensional
by proceeding as in \cite{BHEML:17}, we are going to follow the
literature and report results using dimensional quantities.  Unless
stated otherwise, we use the following units. Length scales and
distances are measured in $\unit{cm}$.  Scattering and absorption
cross sections are measured in $\unit{cm^{-1}}$. Masses are measured
in $\unit{g}$. Time is measured in shake $\unit{sh}$ (called shakes
for plural). One shake \unit{sh} is equal to $\SI{1e-8}{s}$.  Energies
are measured in \unit{GJ}.  Recall that one giga-Joule is equal to
$\SI{1e9}{J}$. Pressures are measured in $\unit{GJ\per cm^3}$; recall that
one $\unit{GJ\per cm^3}$ is equal to $10\,\unit{Gbar}$.

The temperatures are rescaled by the Boltzmann
constant $k$, \ie we use $\widetilde T\eqq kT$ instead of $T$, and the
temperatures thus rescaled are measured in kiloelectronvolt
$\unit{keV}$. Recall that one $\unit{eV}$ is also an energy unit and
$\SI{1}{eV}\eqq\SI{1.602176634e-19}{J}$; hence, the ratio
$\SI{1}{J}/\SI{1}{eV}$ is dimensionless.

The radiation constant
$a\lo{r}\eqq \frac{4\sigma}{c}$ is also rescaled, instead of using
$a\lo{r}$ we use $\widetilde a\lo{r} \eqq a\lo{r}/k^4$. The rescaled radiation constant
is measured in $\unit{GJ\per cm^3 keV^4}$.

The specific heat
capacities at constant volume is also rescaled by the Boltzmann
constant, \ie we use $\widetilde\cv\eqq \cv/k$ instead of $\cv$, and
the rescaled specific heat capacity is measured in $\unit{GJ\per (g.keV)}$
where $\unit{GJ}$ is the giga-Joule unit.
We recall that for ideal gases the heat capacity is given by
$\cv=\frac{1}{\gamma-1}k \text{Na}\frac{Z_{\text{eff}}+1}{A}$, where
$\gamma$ is the heat capacity ratio, $\text{Na}$ is the Avogadro
number, $Z_{\text{eff}}$ is effective nuclear charge (also called
effective ionization state) and $A$ is the atomic mass. We use the ideal
gas equation of state in all the tests
reported in the paper, and we arbitrarily
choose $\frac{Z_{\text{eff}}+1}{A}$ so that $\widetilde\cv
=\SI{0.15}{GJ\per keV.g}$, and unless stated otherwise,
we use $\gamma= \frac53$.

The constants used in the following numerical tests are reported in
Table~\ref{Tab:units_and_constants}.
\begin{table}[h]\centering
  \caption{Units}\label{Tab:units_and_constants}
  \begin{tabular}{ll}
    speed of light $c$                           & $\SI{2.99792458e2}{cm\per\unit{sh}}$     \\
    rescaled radiation constant $\widetilde a\lo{r}$ & $\SI{1.3720172e-2}{GJ\per (cm^3.keV^4)}$ \\
    heat capacity ratio $\gamma$                 & $\frac{5}{3}$ or $\gamma=1.2$ for Mach 50 test
    \\
    rescaled   heat capacity $\widetilde\cv$     & \SI{0.15}{GJ\per keV.g}                  \\\hline
  \end{tabular}
\end{table}
The speed of light and the rescaled radiation constant reported
therein are copied verbatim from the \texttt{ExactPack}
software~\citep{ExactPack_url}, \cite{ExactPack_article}.

\subsection{Marshak wave} We start by considering a simplified version of the problem \eqref{eq:radiation}
to verify the correctness of the approximation of the parabolic
stage. As in \cite[Eq. (1)\&(5)]{Pomraning_1979}, we neglect the fluid
motion and solve the system
\begin{subequations}\label{eq:only_radiation}\begin{align} 
     & \partial_t(\rho \cv T)
    = -\sigmaa c(a\lorr \Tmech(\bu)^4-\ER), \label{EM:eq:only_radiation}             \\
     & \partial_t\ER+
    - \DIV(\tfrac{c}{3\sigmat}\GRAD\ER) = \sigmaa c(a\lorr \Tmech(\bu)^4-\ER), \label{ER:eq:only_radiation}
\end{align}\end{subequations}
where the density $\rho$ is constant. We use the same setting as in
\cite[\S7]{Larsen_JCP_2013}.  The computational domain is
$\Dom\eqq(0,\ell_\Dom)$ with $\ell_\Dom=\SI{0.025}{cm}$. We take
$\rho=\SI{2}{g \per cm^3}$, and the constant $\widetilde\cv$,
$\widetilde a\lo{r}$ and $c$ are given in
Table~\ref{Tab:units_and_constants}. We use $\sigma_t=\sigma_a=300
(\frac{T\lo{ref}}{T})^3$ with $T\lo{ref}=\SI{1}{keV}$. The initial
data are $T_0=\SI{0.01}{keV}$ and $E_{\textup{r},0}=\widetilde a\lo{r}
T_0^4$.  We enforce the Dirichlet boundary $\ER(0,t) = \widetilde
a\lo{r} T\lo{ref}^4$ and the homogeneous Neumann boundary condition
$\partial_x \ER(\ell_\Dom,t)=0$ for all $t>0$.  The approximation is
done by using the algorithm described in the paper without invoking
the hyperbolic update (\ie at the beginning of
\S\ref{sec:approx_stage_3} we set
$\bu^{\textup{h},n}\eqq(\varrho,\bzero,\sfE_{\sfm}^{n},\sfE_{\sfr}^{n})\tr$).

The composite relative error in the $L^1$-norm (defined in
\eqref{def_l1_cumulative_eror}) is computed with an approximation of
the exact solution at the final time $t=\SI{0.02}{sh}$ using \JLGcode
on a uniform mesh composed of $20\,001$ grid points. We show in
Table~\eqref{Tab:Marshak_wave_error_table} the composite relative
$L^1$-error for 6 uniform meshes.  We use $\cfl=0.25$ in all the
simulation (the algorithm is $L^2$-stable irrespective of the value of
$\cfl$). We observe first
order convergence in the asymptotic regime for both \JLGcode and
\ETcode.
\begin{table}[htbp!]
  \centering
  \begin{tabular}[b]{rrlrl}
    \multicolumn{3}{c}{\JLGcode} \\
    \toprule
    $I$   &   $L^1$-\text{error} & rate \\
     65 & \num{2.95E-03} &  -- \\ 
    129 & \num{2.27E-03} &0.38\\ 
    257 & \num{1.52E-03} &0.58\\ 
    513 & \num{9.11E-04} &0.74\\ 
   1025 & \num{5.00E-04} &0.87\\ 
   2049 & \num{2.83E-04} &0.82\\ 
  \end{tabular}
  \begin{tabular}[b]{rrlrl}
    \multicolumn{3}{c}{\ETcode} \\
    \toprule
    $I$   &   $L^1$-\text{error} & rate \\
     65 & \num{3.94E-03} &  -- \\ 
    129 & \num{4.27E-03} &-.11\\ 
    257 & \num{2.62E-03} &0.71\\ 
    513 & \num{1.57E-03} &0.73\\ 
   1025 & \num{6.36E-04} &1.31\\ 
   2049 & \num{3.47E-04} &0.88\\
  \end{tabular} 
  \caption{Marshak wave problem ~\eqref{eq:only_radiation}. Exact solution approximated with \JLGcode and $20\,001$ grid points.
  Composite relative $L^1$-error at $t=\SI{0.02}{sh}$ for 6 meshes with $\cfl=0.25$.}  \label{Tab:Marshak_wave_error_table}
\end{table}
We plot the solution profiles using \ETcode for the Marshak wave for the temperature (left) and radiation energy (right).
\begin{figure}[htbp!]
  \centering\vspace{-\baselineskip}
  \includegraphics[width=0.325\textwidth,trim=5 5 0 5,clip]{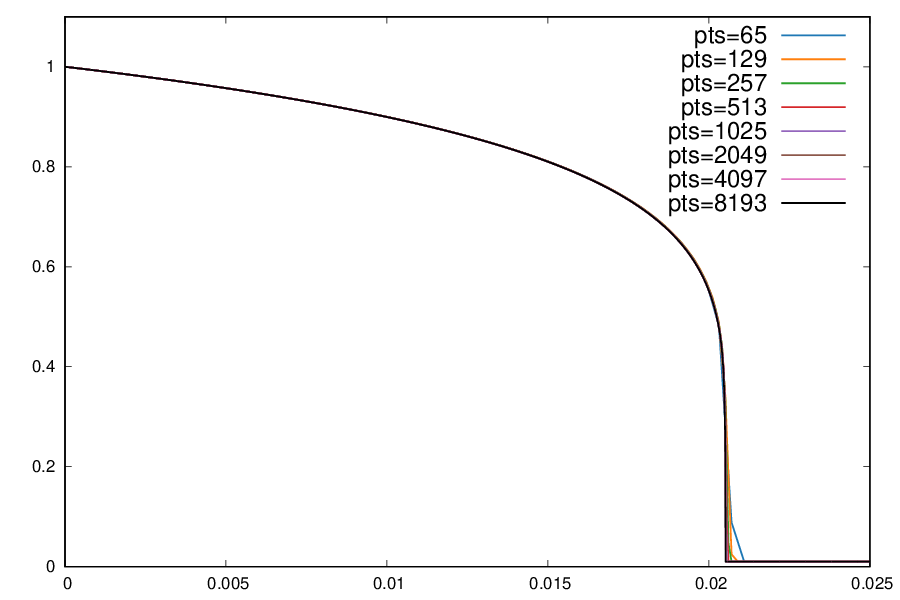}
  \includegraphics[width=0.325\textwidth,trim=5 5 0 5,clip]{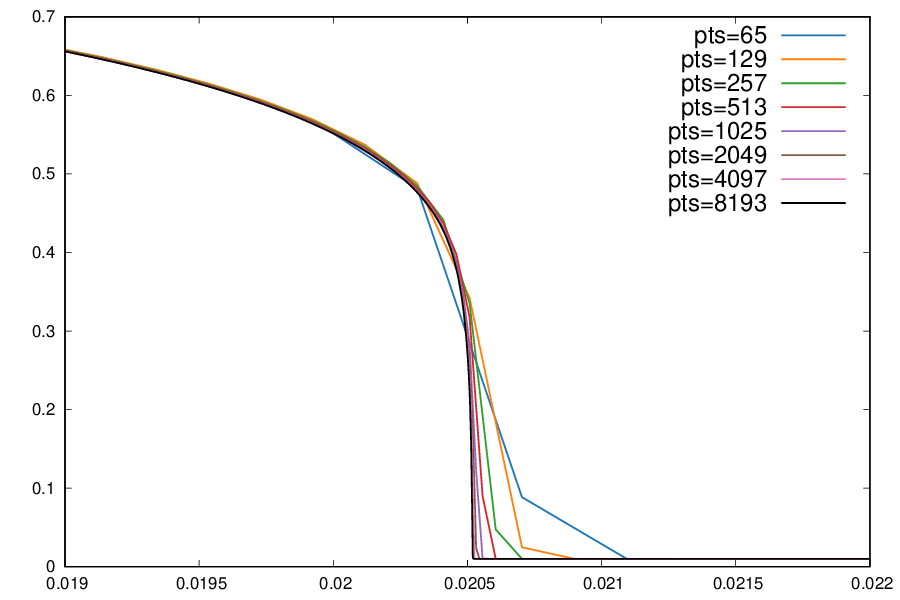}
  \includegraphics[width=0.325\textwidth,trim=5 5 0 5,clip]{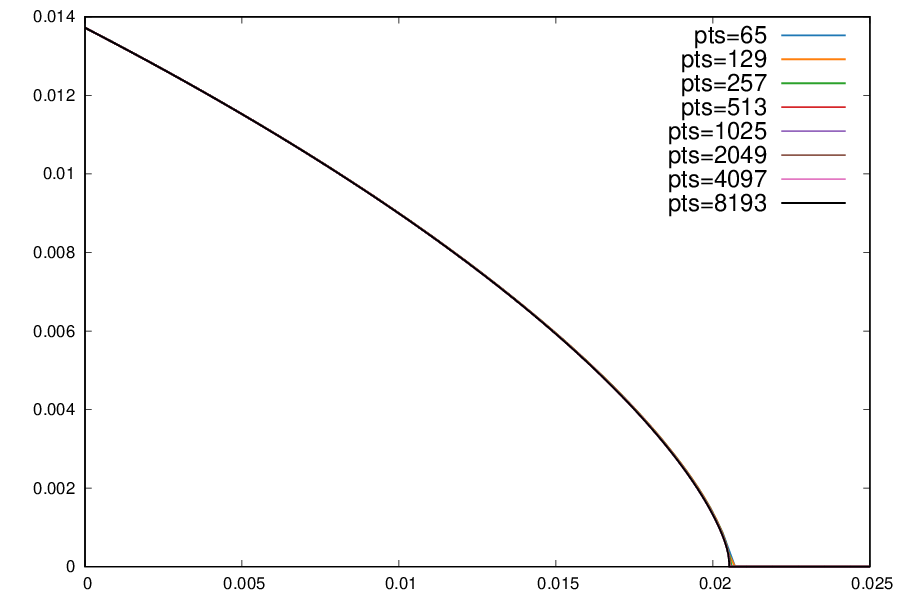}
  \caption{\label{fig:ryujin_marshak} 
    Marshak wave with  $\sigma_t=300(\frac{T\lo{ref}}{T})^3$ at $T=\mynum{0.02}{sh}$. Left: temperature.
    Center: zoom in on temperature shock location. Right: radiation energy.}
\end{figure}

\subsection{1D radiative shocks}
We now consider common steady radiative shock configurations found in
the literature (see~\cite{lowrie2008radiative,delchini2015entropy}).
All the tests reported in this section are performed with
\JLGcode.  We have verified that \ETcode gives the
same results (not reported for brevity).  For all
the configurations the reference density is
$\rho_{\text{ref}} = \mynum{1}{g~cm^{-3}}$ and the reference
temperature is $\widetilde T_{\text{ref}} = \mynum{0.1}{keV}$. Given
$\gamma$, $\rho_{\text{ref}}$, $\widetilde T_{\text{ref}}$ and a
prescribed Mach number, the semi-analytic steady radiative shock
solution is computed with the methodology described
in~\citep{lowrie2008radiative} using the \texttt{ExactPack}
software~\citep{ExactPack_url,ExactPack_article} developed at Los
Alamos National Laboratory.  To shorten the time to reach steady state we initialize
every simulation by interpolating the semi-analytic solution obtained
from \texttt{ExactPack} using $10^6$ uniform grid points.  Unless
stated otherwise, we run all the tests with the
Courant–Friedrichs–Lewy number $\cfl=1$.

\subsubsection{Subcritical tests}

We first consider the Mach 1.2 and Mach 3 radiative shock cases which
are categorized as ``subcritical'' in~\citep{lowrie2008radiative}.
The computational domain is $\Dom = (-\mynum{0.02}{},
\mynum{0.02}{cm})$.  Dirichlet boundary conditions are enforced. The
tests are performed on a sequence of uniform meshes.  We assume that
$\sigma_t = \mynum{500}{cm^{-1}}$ and $\sigma_a =
\mynum{500}{cm^{-1}}$, \ie $\sigma_s = 0$.  The final time is set to
$t = \mynum{1}{sh}$. This time is long enough for steady state to be
reached.

\begin{table}[htbp!]
  \centering
  \begin{tabular}[b]{rrlrl}
    \multicolumn{5}{c}{$\sigma_a = \sigma_t = \mynum{500}{cm^{-1}}$}\\
    \toprule
    $I$   & Mach 1.2                     &      & Mach 3   &      \\
    101   & \numm{0.08141660177673432}   &      & \numm{0.0830743876797732}    &      \\
    201   & \numm{0.04157833817241097}   & 0.97 & \numm{0.04206934854045891}   & 0.98 \\
    401   & \numm{0.02054414451572962}   & 1.02 & \numm{0.02097120323124426}   & 1.00  \\
    801   & \numm{0.01011624843851955}   & 1.02 & \numm{0.01031794569695309}   & 1.02 \\
    1601  & \numm{0.005010593968288955}  & 1.01 & \numm{0.005034324215589092}  & 1.04 \\
    3201  & \numm{0.002499508129445163}  & 1.00  & \numm{0.002453191299661565} & 1.04 \\
    6401  & \numm{0.001253661463923093}  & 1.00  & \numm{0.00120910406456733}  & 1.02 \\
    12801 & \numm{0.0006309372563796663} & 0.99 & \numm{0.0006180026406430259} & 0.97 \\
    \bottomrule
  \end{tabular}\hfil
  \begin{tabular}[b]{rrl}
    \multicolumn{3}{c}{$\sigma_a = \sigma_t = 500 \frac{\rho T_\text{ref}^{3.5}}{\rho_\text{ref} T^{3.5}}\mynum{}{cm^{-1}}$}\\
     \toprule
     $I$   & Mach 3   & \\
    101 & \num{7.83E-02} &  -- \\ 
    201 & \num{3.91E-02} &1.01\\ 
    401 & \num{1.89E-02} &1.05\\ 
    801 & \num{9.14E-03} &1.05\\ 
   1601 & \num{4.54E-03} &1.01\\ 
   3201 & \num{2.25E-03} &1.01\\ 
   6401 & \num{1.10E-03} &1.03\\ 
  12801 & \num{5.30E-04} &1.05\\ 
   \bottomrule
      \end{tabular}
   \caption{Left table: $L^1$ errors and convergence rates for 1D radiative
     shock with $\sigma_a = \mynum{500}{cm^{-1}}$ and $\sigma_s = 0$
     for Mach 1.2 and 3 respectively.  Right table: $L^1$ errors and
     convergence rates for 1D radiative shock with $\sigma_a=500
     \frac{\rho}{\rho_\text{ref}}\left(\frac{T_\text{ref}}{T}\right)^{3.5}\mynum{}{cm^{-1}}$
     and $\sigma_s = 0$ for Mach 3.}\label{tab:rad_shock_case_1}
\end{table}

\begin{figure}[htbp!]
  \centering\vspace{-\baselineskip}
  \includegraphics[width=0.31\textwidth,trim=30 10 15 10,clip=]{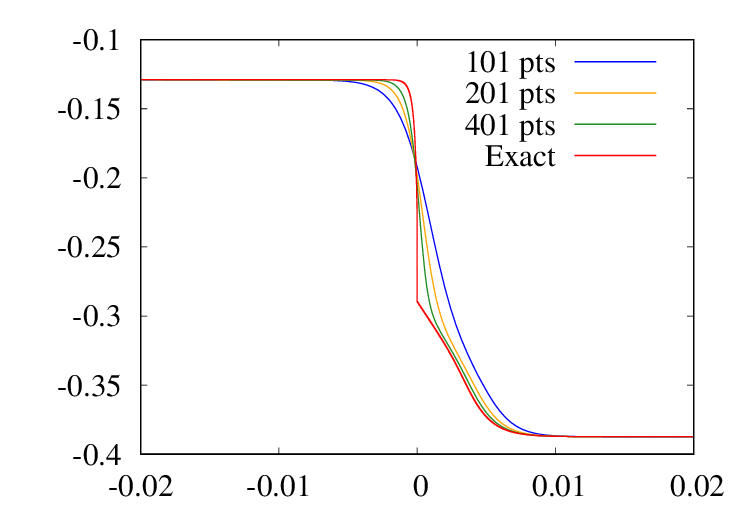}
  \includegraphics[width=0.31\textwidth,trim=30 10 15 10,clip=]{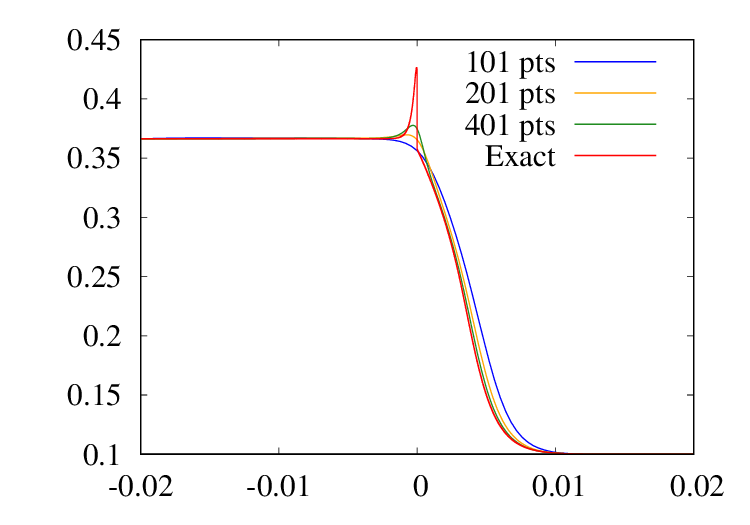}
  \includegraphics[width=0.31\textwidth,trim=30 10 15 10,clip=]{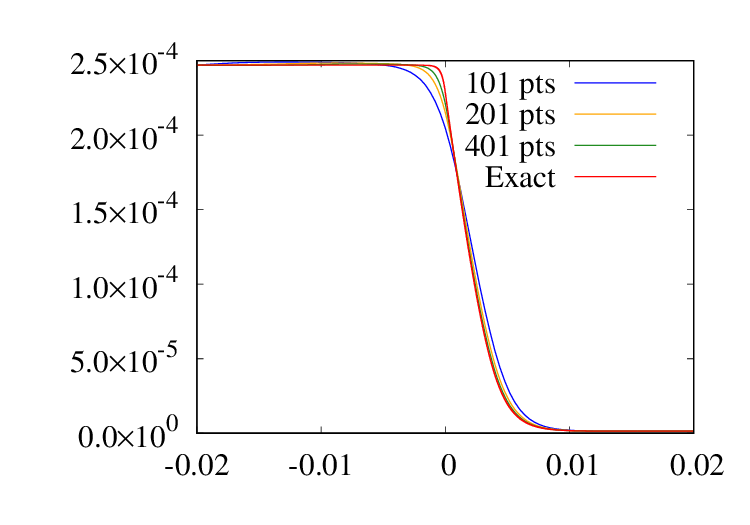}
  \caption{Mach 3 radiative shock with $\sigma_t =
    \mynum{500}{{cm}^{-1}}$ at $T=\mynum{1}{sh}$. Left: velocity.
    Center: temperature. Right: radiation energy.}
   \label{fig:rad_shock_case_1}
\end{figure}

In Table~\ref{tab:rad_shock_case_1}, we report the
 cumulative $L^1$-norm error defined in \eqref{def_l1_cumulative_eror}
for Mach numbers 1.2 and 3, respectively.  We observe first-order
rate as expected. In Figure~\ref{fig:rad_shock_case_1}, we plot the
numerical velocity, material temperature and radiation energy
for the Mach 3 configuration using $101$, $201$, and $401$ grid points and we compare
the results to the semi-analytic solution.

 We also report in Table~\ref{tab:rad_shock_case_1} a test done at
 Mach 3 with the opacity depending on $\rho$ and $T$; see \eg
 \cite[\S4.4]{Delchini_Ragusa_Ferguson_IJNMF_2017} or
 \cite[Fig.~17]{lowrie2008radiative}. More specifically; we take
 $\sigma_t=\sigma_a = 500
 \frac{\rho}{\rho_\text{ref}}\left(\frac{T}{T_\text{ref}}\right)^{-3.5}\mynum{}{cm^{-1}}$.
 In this case the computational domain is $\Dom = (-\mynum{0.3}{},
 \mynum{0.3}{cm})$. The simulations are run up to $t=\mynum{10}{sh}$ to reach
 steady state.  The nonlinear dependency of the cross section with
 respect to the density and the temperature makes the zone out of
 thermodynamics equilibrium larger (see temperature peak in the center
 panel of Figure~\ref{fig:mach3_temp} and compare to
 Figure~\ref{fig:rad_shock_case_1}).  The convergence rates are
 reported in the right table in Table~\ref{tab:rad_shock_case_1}. We
 observe first order convergence in this case as well.

\begin{figure}[htbp!]
  \centering
  \includegraphics[width=0.31\textwidth,trim=30 10 15 10,clip=]{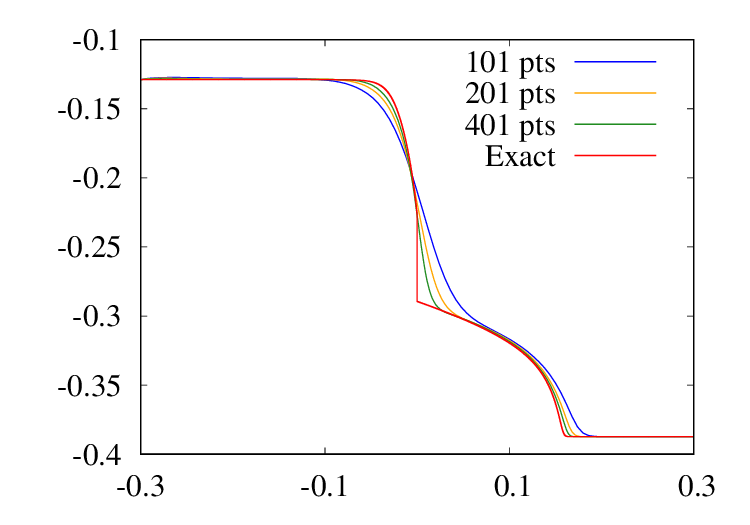}
  \includegraphics[width=0.31\textwidth,trim=30 10 15 10,clip=]{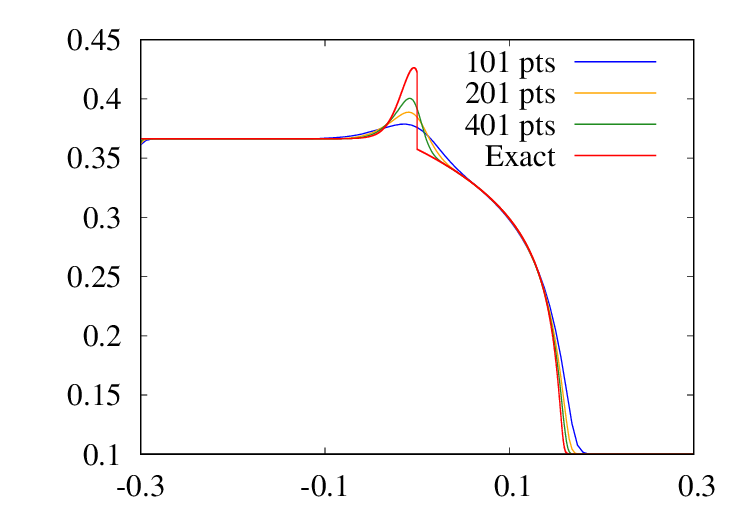}
  \includegraphics[width=0.31\textwidth,trim=30 10 15 10,clip=]{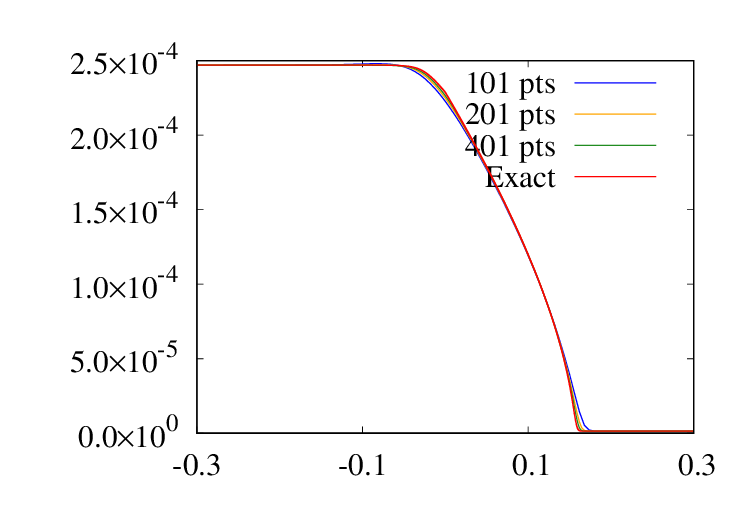}
 \caption{Mach 3 radiative shock with $\sigma_a=500
     \frac{\rho}{\rho_\text{ref}}\left(\frac{T_\text{ref}}{T}\right)^{3.5}\mynum{}{cm^{-1}}$
     and $\sigma_s = 0$ for Mach 3. Left: velocity.
  Center: temperature. Right: radiation energy.} \label{fig:mach3_temp}
\end{figure}

\subsubsection{Supercritical tests}

Now, we consider the Mach 10, Mach 30, and Mach 50 radiative shock
cases which are categorized as ``supercritical''
in~\citep{lowrie2008radiative}.

\begin{table}[htbp!]\centering\addtolength{\tabcolsep}{-2.pt}
\begin{tabular}[b]{rrlrl}
 \multicolumn{5}{c}{$\sigma_a = \sigma_t = \mynum{500}{cm^{-1}}$}\\
    \toprule
    $I$ & Mach 30         &      &  Mach 50         &     \\
    101 & \num{1.58E-01} &  --   &  \num{1.75E-01} &  -- \\
    201 & \num{9.03E-02} &0.81 &  \num{9.51E-02} &0.88\\ 
    401 & \num{4.79E-02} &0.92 &  \num{5.47E-02} &0.80\\ 
    801 & \num{2.61E-02} &0.88 &  \num{3.03E-02} &0.85\\
   1601 & \num{1.39E-02} &0.91 &  \num{1.67E-02} &0.87\\
   3201 & \num{7.30E-03} &0.93 &  \num{9.35E-03} &0.83\\
   6401 & \num{3.79E-03} &0.94 &  \num{5.03E-03} &0.89\\
  12801 & \num{1.98E-03} &0.94 &  \num{2.66E-03} &0.92\\ 
    \bottomrule
\end{tabular}\hfil
\begin{tabular}[b]{rrl}
  \multicolumn{3}{c}{$\sigma_a = \sigma_t = 500 \frac{T_\text{ref}\rho}{\rho_\text{ref} T}\mynum{}{cm^{-1}}$}\\
    \toprule
    $I$   & Mach 10       &       \\
      101 &  \num{1.15E-01}  &  -- \\ 
    201 &  \num{5.76E-02}  & \num1.01\\ 
    401 &  \num{3.05E-02}  & \num0.92\\ 
    801 &  \num{1.62E-02}  & \num0.92\\ 
   1601 &  \num{7.72E-03}  & \num1.07\\ 
   3201 &  \num{3.96E-03}  & \num0.96\\ 
  12801 &  \num{9.96E-04}  & \num1.00\\
    \bottomrule
  \end{tabular}
   \caption{Left table: $L^1$ errors and convergence rates for 1D
     radiative shock with $\sigma_a = \sigma_t = \mynum{500}{cm^{-1}}$
     and $\sigma_s = 0$ for Mach 30 and 50, respectively.  Right
     table: $L^1$ errors and convergence rates for 1D radiative shock
     with $\sigma_a =\sigma_t= 500
     \frac{T_\text{ref}\rho}{\rho_\text{ref} T}\mynum{}{cm^{-1}}$ and
     $\sigma_s = 0$ for Mach 10.}\label{tab:rad_shock_case_2}
\end{table}
The computational domain for the Mach
10 case is $D = (-\mynum{2}{}, \mynum{5}{cm})$ and  we
use density and temperature dependent opacities $\sigma_t=\sigma_a = 500
\frac{\rho}{\rho_\text{ref}}\left(\frac{T}{T_\text{ref}}\right)^{-3.5}\mynum{}{cm^{-1}}$.
The simulation time to reach steady state is $t=\mynum{50}{sh}$.
The computational domain for Mach 30 is $D = (-\mynum{0.1}{},
\mynum{0.4}{cm})$ and we use $\sigma_t=\sigma_a = \mynum{500}{cm^{-1}}$.
The simulation time to reach steady state in this case is $t=\mynum{1}{sh}$.  The domain
for Mach 50 is $D = (-\mynum{0.1}{}, \mynum{0.6}{cm})$ and we use
$\sigma_t=\sigma_a =\mynum{500}{cm^{-1}}$.
The simulation time to reach steady state in this case is $t=\mynum{10}{sh}$.
We use $\gamma=1.2$ for the Mach 50 case only (the solution process
used in the \texttt{ExactPack}
software is ill-posed for $\gamma=\frac53$ at Mach 50). These tests are again performed on a
sequence of uniform meshes. The final time is set to $T =
\mynum{1}{sh}$.  In the left panel of
Table~\ref{tab:rad_shock_case_2}, we report the cumulative $L^1$-norm
error for the tests at Mach numbers 30 and 50. We show in the right
panel of the table the cumulative $L^1$-norm error for the tests at
Mach numbers 10.  In all the case we observe the first-order rate as
expected.

 \begin{figure}[htbp!]
  \centering
  \includegraphics[width=0.31\textwidth,trim=30 10 15 10,clip=]{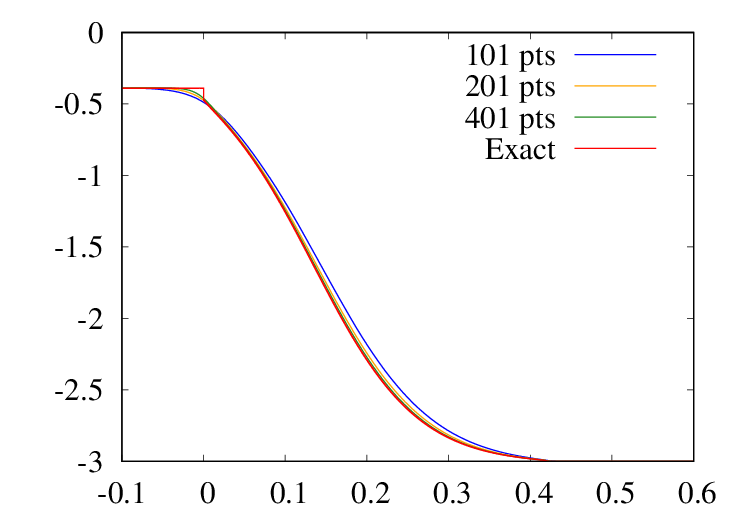}
  \includegraphics[width=0.31\textwidth,trim=30 10 15 10,clip=]{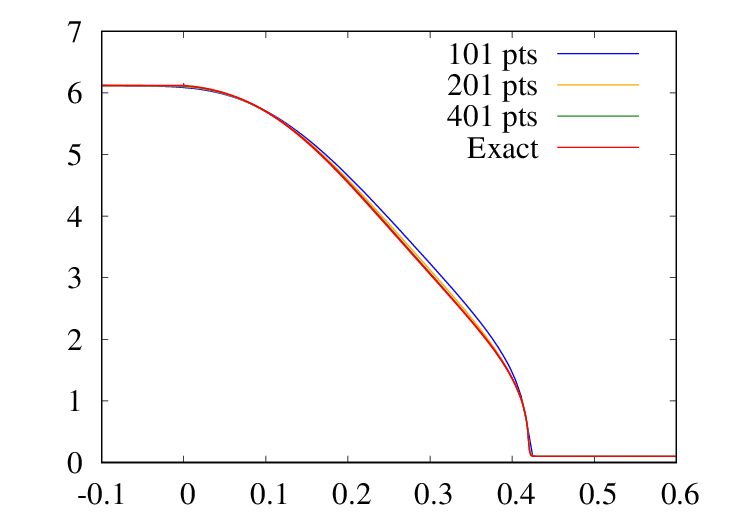}
  \includegraphics[width=0.31\textwidth,trim=30 10 15 10,clip=]{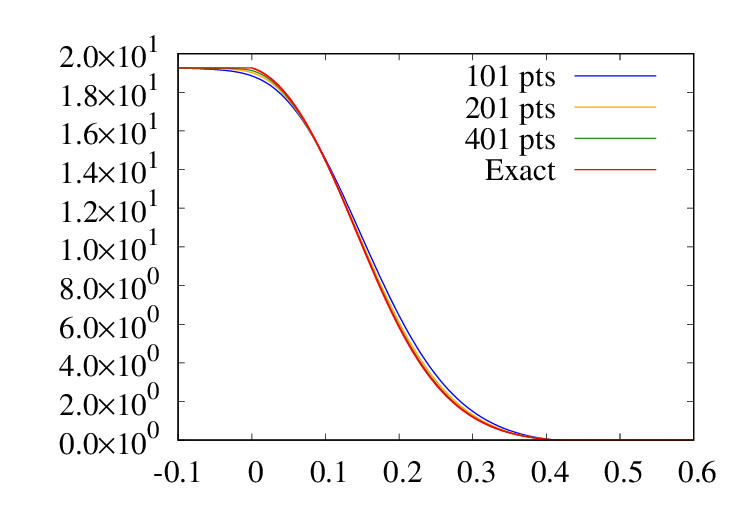}
  \caption{Mach 50 radiative shock with $\sigma_a=\sigma_t =
    \mynum{500}{{cm}^{-1}}$ at $t=\mynum{10}{sh}$. Left: velocity.
    Center: temperature. Right: radiation energy.\label{fig:mach50}}%
\end{figure}
In Figure~\ref{fig:mach50}, we plot the numerical velocity, material temperature and
radiation energy for the Mach 50 configuration.

\subsection{ICF-like configuration}

We finish by simulating a setting loosely inspired from an indirect drive inertial
confinement fusion experiment in one and two dimensions in a hohlraum device,
see \eg \cite{PhysRevLett.117.245001}. Our objective is not to be
close to one particular experiment but instead to give some feeling on
how the method behaves when solving a problem with data that are in a
realistic range.

\subsection{Setting}\label{Sec:ICF_setting}
We model the transverse cross section of the hohlraum by a disk
$\Dom=\{\bx\in\Real^d\st \|\bx\|_{\ell^2}<r\lo{ext}\}$ centered at
$\bzero$ of radius $r\lo{ext}\eqq \mynum{0.3}{cm}$. A spherical pellet
with a high-density carbon shell contains a light material (either gas
or solid deuterium).  The outside of the pellet is composed of a low
pressure gas. In the indirect drive setting considered here, radiation
energy is injected into the domain through the boundary of $\Dom$
using Dirichlet boundary conditions. The slip boundary condition on
the velocity is enforced at the boundary of $\Dom$. We simplify the
setting by modeling all the materials with the same ideal gas equation
of state with $\gamma=\frac53$.  We now summarize the geometry,
initial conditions, and boundary conditions:
\begin{equation}
  \begin{alignedat}{3}
    & r\lo{int}\eqq \SI{0.13}{cm},&&
     r\lo{sh}\eqq \SI{0.15}{cm},&&
     r\lo{ext}\eqq \SI{0.6}{cm}, \\
    &\rho\lo{int}\eqq \SI{0.0005}{\gram\per\cm^3},&&
    \rho\lo{sh}\eqq \SI{3.5}{\gram\per\cm^3},&&
    \rho\lo{ext}\eqq \SI{0.0001}{\gram\per\cm^3},\\
    & T\lo{int}\eqq \SI{2.6d-6}{\kilo\electronvolt},\quad&&
    T\lo{ref}\eqq \SI{0.25}{\kilo\electronvolt},\quad &&
    \gamma = \tfrac{5}{3},\\
    &\rho\lo{ref}\eqq \SI{1}{\gram\per\cm^3},&&
    \sigma\lo{ref} \eqq \SI{5d3}{\per\cm},&&
    \sigma\lo{a}(\rho)=\sigma\lo{t}(\rho) = \tfrac{\rho}{\rho\lo{ref}}
    \sigma\lo{ref}.
    \end{alignedat}\vspace{-\baselineskip}
\end{equation}

\begin{equation}
  \rho_0(\bx), \bu_0(\bx), T_0(\bx), E\lo{r,0}(\bx)=
    \begin{cases}
      \rho\lo{int}, \bzero,  T\lo{int},\quad a\lorr T\lo{int}^4 &  \|\bx\|_{\ell^2}< r\lo{int} \\
      \rho\lo{sh}, \bzero,  \frac{\rho\lo{int}}{\rho\lo{sh}} T\lo{int}, a\lorr (\frac{\rho\lo{int}}{\rho\lo{sh}} T\lo{int})^4  &  \|\bx\|_{\ell^2}< r\lo{sh} \\
      \rho\lo{ext},\bzero, \frac{\rho\lo{int}}{\rho\lo{ext}} T\lo{int}, a\lorr T\lo{ref}^4&  \text{otherwise}.
    \end{cases}\vspace{-\baselineskip}
\end{equation}

\begin{equation}
  \bu\SCAL\bn_{|\front} =0,\quad E_{\textup{r}|\front} = a\lorr T\lo{ref}^4.
  \end{equation}

\subsection{One-dimensional case}
We start by solving the problem in one space dimension.  Although to
account for the spherical nature of the pellet, the problem should be
solved in spherical coordinates, we work with the
Cartesian coordinate system as our objective is just to demonstrate the
robustness of the method.

\begin{figure}[ht]
  \includegraphics[width=0.32\linewidth,trim=30 8 25 10,clip=]{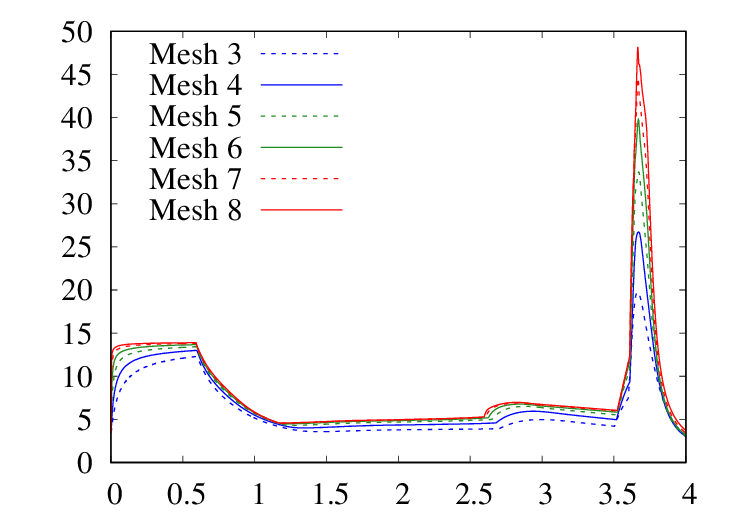}
  \includegraphics[width=0.32\linewidth,trim=30 8 25 10,clip=]{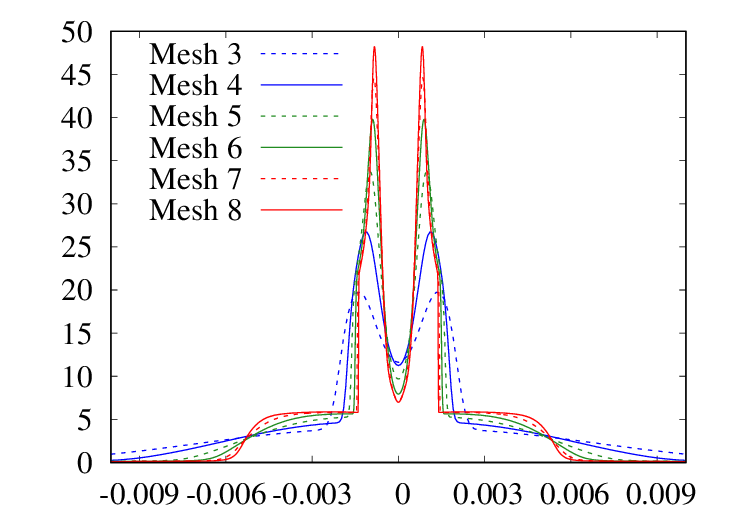}
  \includegraphics[width=0.32\linewidth,trim=30 8 25 10,clip=]{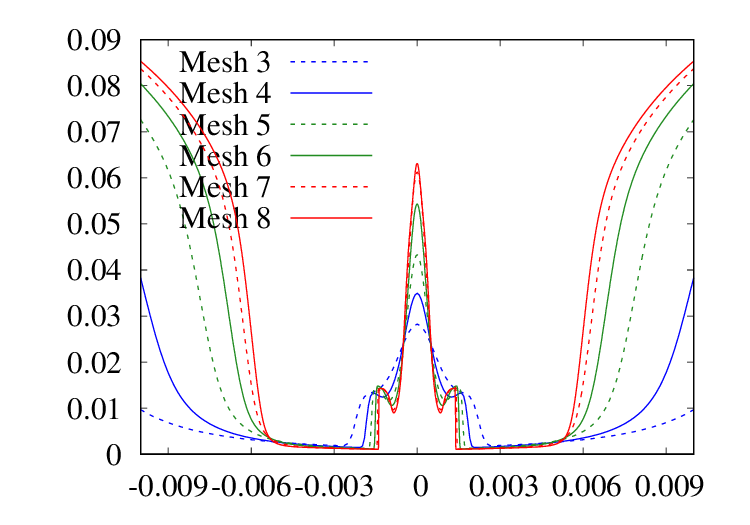}
  \caption{1D ICF. Left: time history of maximum density. Center:
    density profile at the time maximum density is reached.  Right:
    temperature at the time maximum density is reached. Mesh3 has
    $4097$ grid points and Mesh8 has $131073$ grid points.}\label{Fig:1d_ICF}
  \end{figure}

The computational domain is $\Dom=(-r\lo{ext},r\lo{ext})$ with
$r\lo{ext}\eqq \mynum{0.6}{cm}$.  The problem is solved on various
uniform meshes with increasing number of grid points to verify that
convergence occurs. The simulation time is $t=\mynum{4}{sh}$. We show
in Figure~\ref{Fig:1d_ICF} the time history of the maximum density for
six meshes $(\text{mesh}i)_{i\in\{3:8\}}$ with number of grid points
equal to $2^{9+i}$ using \JLGcode. We observe in the left panel of the
figure that the maximum density rises very quickly after
initialization due to a very strong compression wave crossing the
high-density carbon (this wave is clearly visible in the left panel in
Figure~\ref{fig0p25_density}). The time for this wave to cross the
high-density carbon is approximately $\mynum{0.6}{sh}$. The left and
right compression waves then travel in the interior material and make
contact at approximately $\mynum{2.15}{sh}$. At this time the
compression process starts and reaches it maximum at about
$\mynum{3.67}{sh}$. By inspecting the left panel we observe that this
time depends very little of the mesh resolution, but the actual value
of the maximum density at this time can only be well captured on very
fine grids. We show in the center and right panels of
Figure~\ref{Fig:1d_ICF} closeup views of the density and temperature
fields in the interval $(-0.01,0.01)\mynum{}{cm}$ at the time
$3.67\mynum{}{sh}$ when the density peak reaches its maximum. These
results are well reproduced with \ETcode and are therefore not
reported for brevity.

\subsection{Two-dimensional case}
We finish with two-dimensional simulations of the ICF problem keeping
the setting described in \S\ref{Sec:ICF_setting}. Again, the problem should be solved
in cylindrical coordinates, but we use Cartesian coordinates for
simplicity.

\begin{figure}[ht]
  \includegraphics[width=0.32\linewidth,trim=30 8 25 10,clip=]{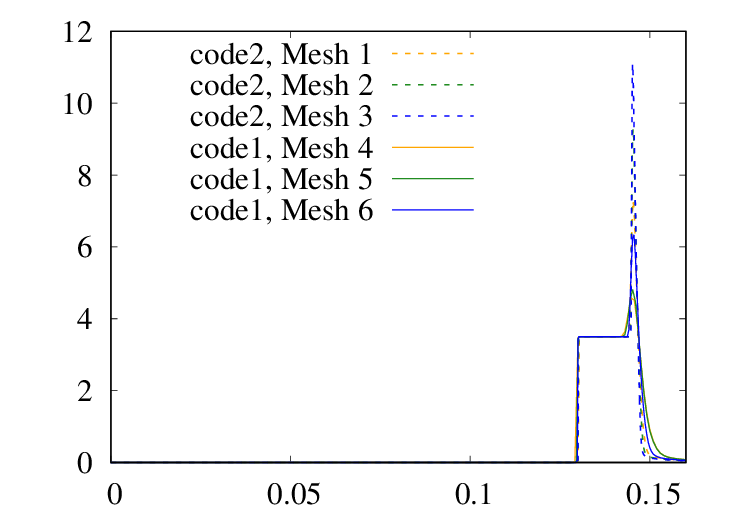}
  \includegraphics[width=0.32\linewidth,trim=30 8 25 10,clip=]{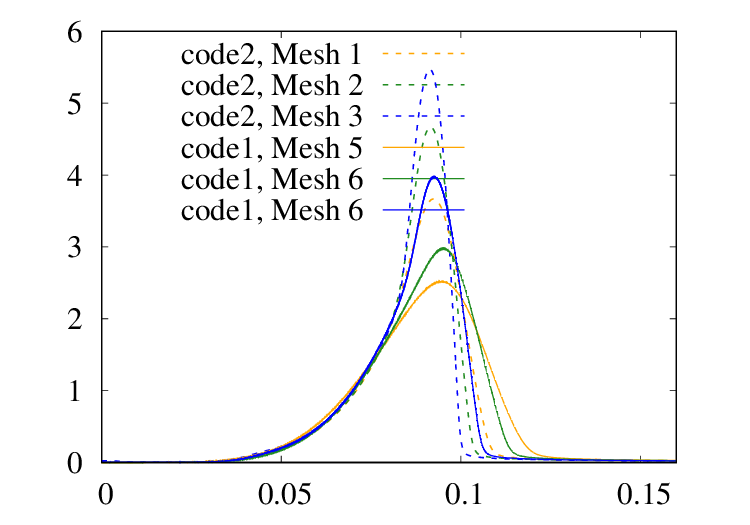}
  \includegraphics[width=0.32\linewidth,trim=30 8 25 10,clip=]{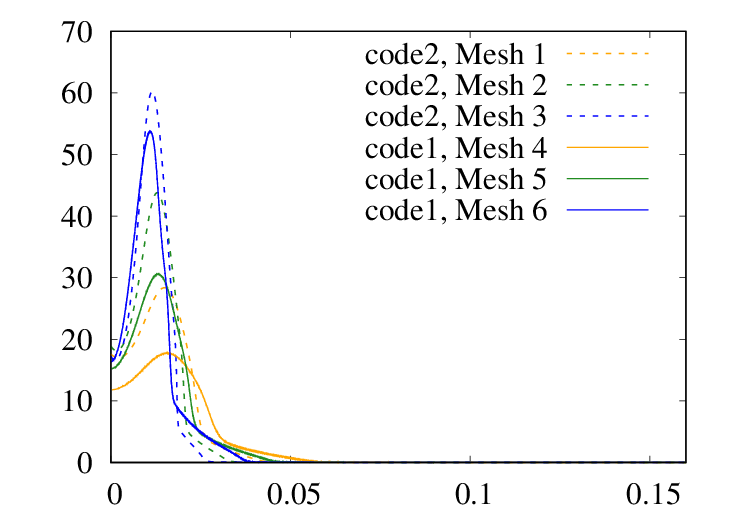}
    \caption{Density \vs $r$. Left to right: $t=\mynum{0.1}{sh}$;
    $t=\mynum{2.0}{sh}$; $t=\mynum{3.7}{sh}$.}\label{fig0p25_density}
\end{figure}

\begin{figure}[ht]
  \includegraphics[width=0.32\linewidth,trim=30 8 25 10,clip=]{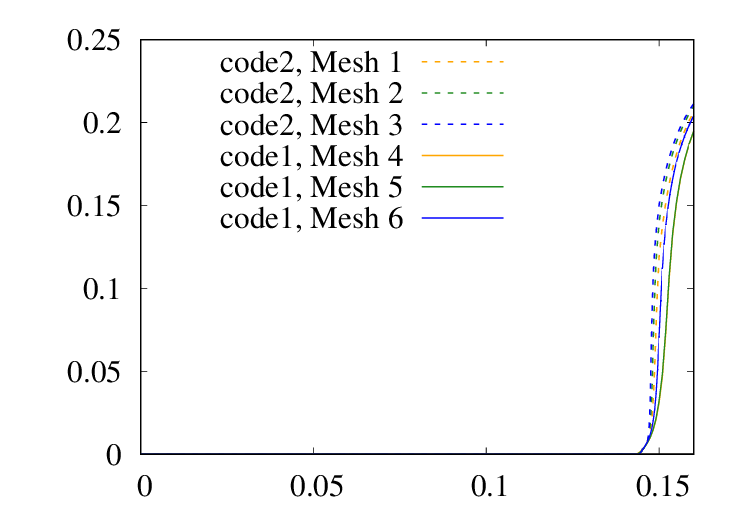}
  \includegraphics[width=0.32\linewidth,trim=30 8 25 10,clip=]{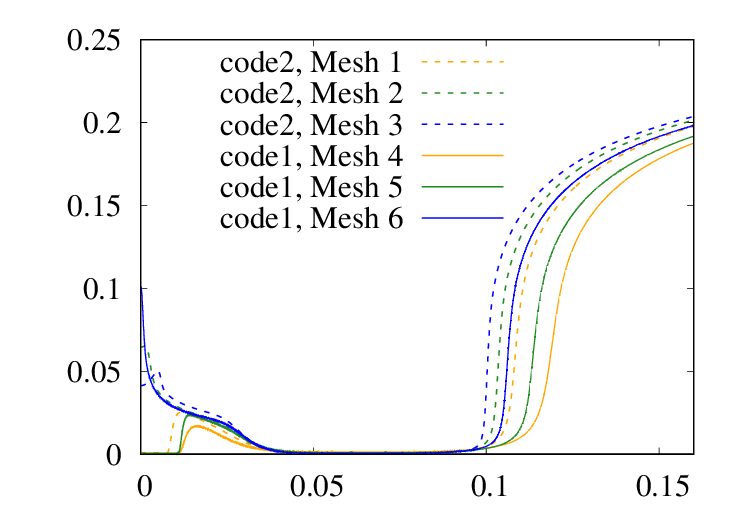}
  \includegraphics[width=0.32\linewidth,trim=30 8 25 10,clip=]{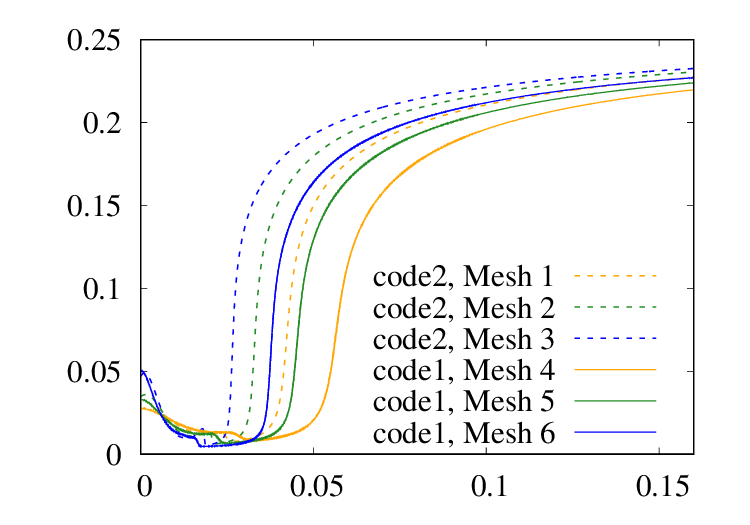}
  \caption{Temperature \vs $r$. Left to right: $t=\mynum{0.1}{sh}$;
    $t=\mynum{2.0}{sh}$; $t=\mynum{3.7}{sh}$}\label{fig0p25_temp}
\end{figure}

The simulations using \JLGcode are done on three meshes composed of
nonuniform triangular Delaunay meshes (Mesh4: 115,079 grid points, Mesh5: 330,735 grid points, 
Mesh6: 1,261,299 grid points). The results
are compared to the simulations done with \ETcode using three
quadrangular meshes (Mesh 1: 786,945, Mesh 2: 3,146,753, Mesh 3:
12,584,961). All the meshes (triagular and quadrangular) are more refined in the shell and the interior
region than in the exterior region. We show in Figure~\ref{fig0p25_density} and
Figure~\ref{fig0p25_temp} the scatter plots of the density and
temperature as functions of the radius $r\eqq\|\bx\|_{\ell^2}$ for the
seven meshes in the range $r\in[0,0.16]$ and for times
$\mynum{0.1}{sh}$, $\mynum{2}{sh}$, and $\mynum{3.7}{sh}$.
We observe that cylindrical symmetry is well preserved for all the fields. 

\begin{figure}[ht]\centering
  \includegraphics[width=0.35\linewidth,trim=0 0 0 0,clip=]{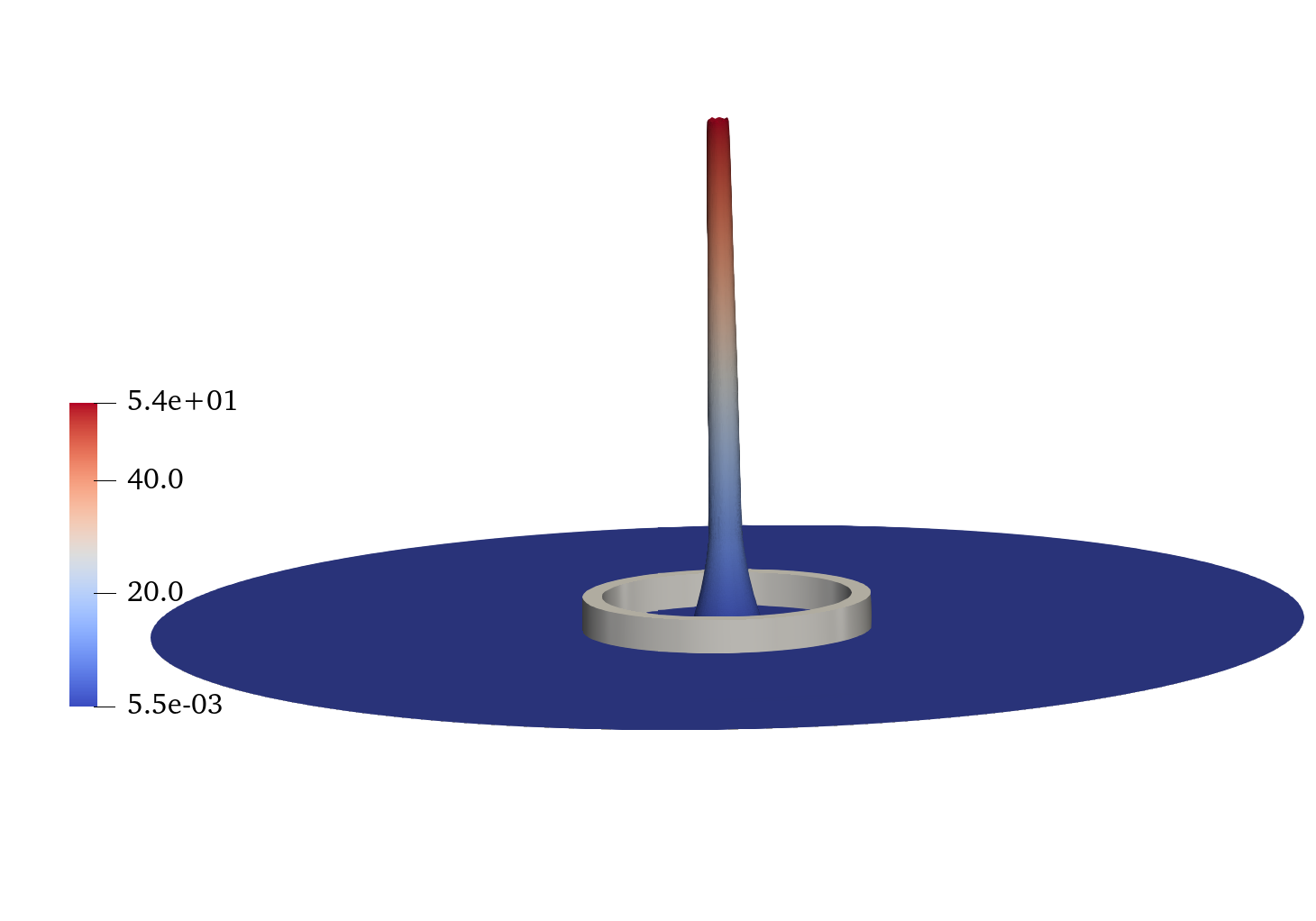}%
  \includegraphics[width=0.29\linewidth,trim=0 0 0 0,clip=]{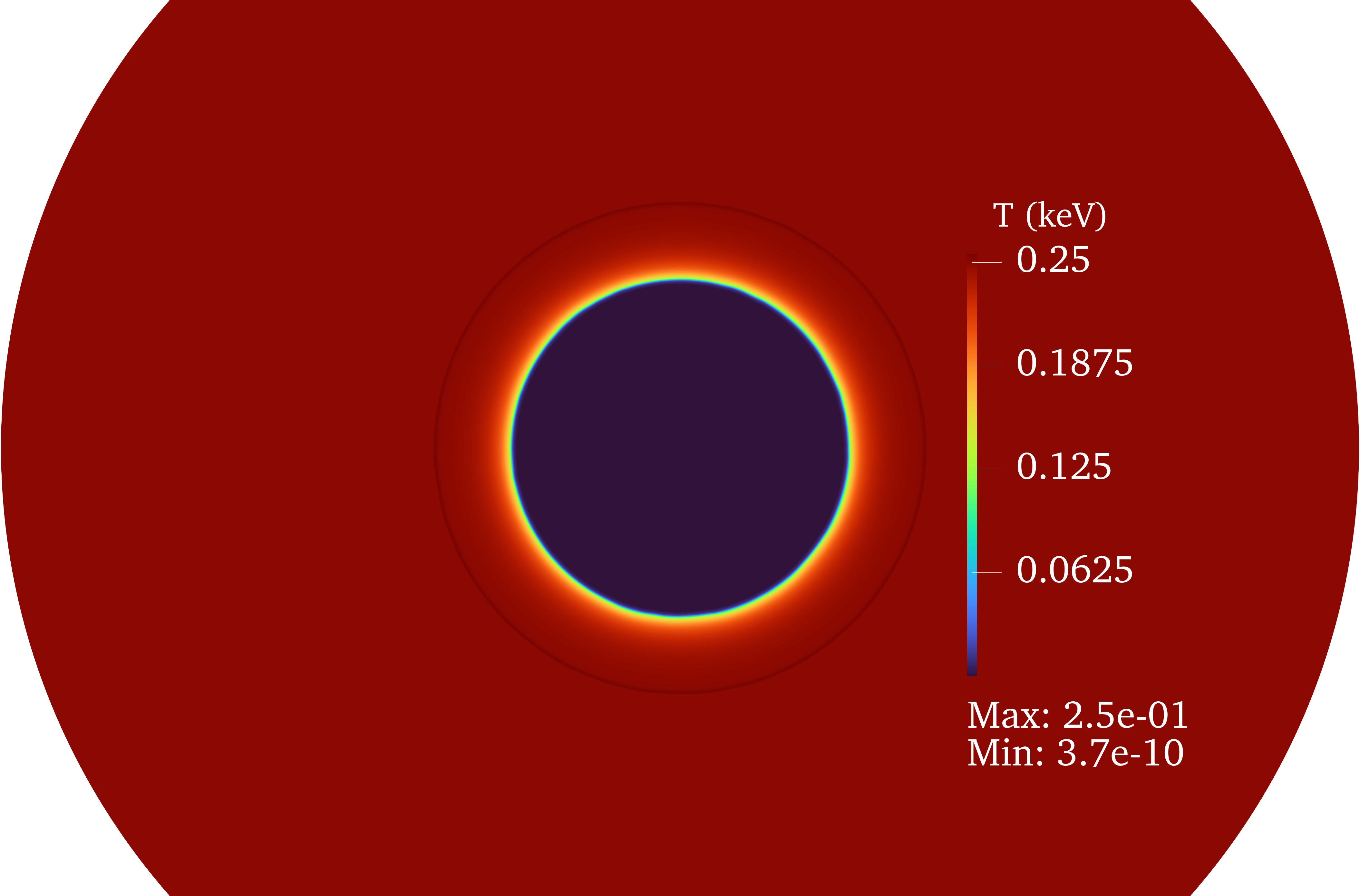}%
   \includegraphics[width=0.35\linewidth,trim=25 11 15 10,clip=]{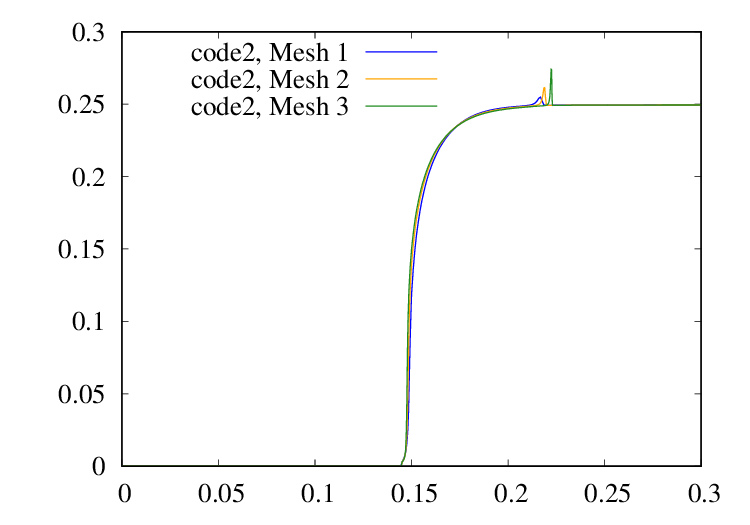}
  \caption{Left: 3D rendering of the density density field; the
    density at $t=0$ is shown is solid color; the density at $t=\mynum{3.7}{sh}$
    is shown in color (\JLGcode, Mesh6). Right: temperature at $t=0.1$; notice the faint circular black line which is 
    a Zeldovich zpike.}
    \label{ICF_2d}
\end{figure}

We further show in the left panel of Figure~\ref{ICF_2d} a
three-dimensional rendering of the density field at
$t=\mynum{0.1}{sh}$ (solid color) and and $t=\mynum{3.7}{sh}$ (using
the cool to warm color palette).  We show the temperature field at $t =
\mynum{0.1}{sh}$ in the center panel. We observe in the temperature
profile the Zeldovich spike propagating outwards away from the center
(black line in the red background). We highlight the Zeldovich spike in the right panel in
Figure~\ref{ICF_2d} by plotting again the temperature
profile over the radius range $r\in[0,0.3]$.

\appendix

\section{First hyperbolic problem and Riemann problem}

We focus in this section on the hyperbolic
problem~\eqref{hyperbolic_one}. We recall here the technique that is
introduced in \cite{Clayton_Guermond_Popov_SIAM_SISC_2022} to
construct an invariant-domain preserving approximation of the problem
for any equation of state satisfying the generic
assumptions~\eqref{Pressure_is_nonnegative} with the domain $\calB(b)$
defined in \eqref{def_of_calB}. The key is to construct auxiliary
states with the desired properties.

\subsection{Abstract Riemann problem and bar states}

Since the technique we are going to present in this section is quite
general, we change notation for a moment and assume that one wants to
solve a general hyperbolic system with some generic flux
$\polg:\calG\to \polR^{s\CROSS d}$, $s\ge 1$, where $\calG$, the
domain of $\polg$, is a subset of $\Real^s$.  Given two states
$\bu_L,\bu_R$ in $\calG$, a unit vector $\bn$ in $\Real^d$, and a
positive real number $\lambda$, we consider the following auxiliary
states (also called bar states hereafter):
\begin{equation} \label{def_ubar}
  \overline\bu_{LR}(\lambda)\eqq \frac{1}{2} (\bu_L+\bu_R) -
  \frac{1}{2\lambda} (\polg(\bu_R)\bn-\polg(\bu_L)\bn).
\end{equation}
To be able to extract information
regarding $\overline\bu_{LR}(\lambda)$, it is useful
to consider the following Riemann problem:
\begin{equation}
  \partial_t \bw + \partial_x(\polg(\bw)\bn) = 0,\qquad
  \bw(x,0)=\begin{cases}
    \bu_L & \text{if $x<  0$,}  \\
    \bu_R & \text{if $0\le x$}.
  \end{cases}\label{Riemann_problem}
\end{equation}
Recall that~\eqref{Riemann_problem} may have infinitely many weak
solutions and weak solutions to \eqref{Riemann_problem} are
self-similar. For every self-similar weak solution, $\bv$, there
exists a number $\lambda_{\max}^\bv>0$, called maximum wave speed, so
that $\bv(\frac{x}{t})= \bu_L$ if $x\le -\lambda_{\max}^\bv t$ and
$\bv(\frac{x}{t})= \bu_R$ if $\lambda_{\max}^\bv t\le x$.  The
following result, proved in Lemma~2.1 and Lemma~2.2 in
\citep{guermond_popov_sinum_2016} (see also Lemma~3.2 in
\cite{Clayton_Guermond_Popov_SIAM_SISC_2022}) and largely inspired
from the work of P. Lax, A. Harten, E. Tadmor \etal (see
Remark~\ref{Rem:literature_on_barstates}), explains the connection
between ~\eqref{def_ubar} and~\eqref{Riemann_problem}.
\begin{lemma}\label{Lem:Riemann_average} Let $\bu_L,\bu_R$ be two
  arbitrary states in $\calG$. Let $\bv(\tfrac{x}{t})$ be any
  self-similar weak solution to~\eqref{Riemann_problem}. Let
  $\lambda_{\max}^\bv$ be the maximum wave speed for this weak
  solution. Let us set
  $\overline\bv(t) :=\int_{-\frac12}^{\frac12} \bv(\tfrac{x}{t})
    \diff x$ for all $t>0$.  Let $\lambda>0$ and assume that
  $\lambda\ge\lambda_{\max}^\bv$. Then
  \begin{align}
     & \overline\bu_{LR}(\lambda) =\overline\bv(\tfrac{1}{2\lambda}),\label{Riemann_avg:Lem:Riemann_average} \\
     & \text{If $\calG$ is convex and $\bv(\xi)\in \calG$ for all $\xi\in \Real$, then
    $\overline\bu_{LR}(\lambda)\in \calG$,}\label{IDP:Lem:Riemann_average}                                   \\
    & \label{Entropy_ineq:Lem:Riemann_average}
    \text{Let $(\eta,\bq)$ be an
      entropy pair.
     If $\bv$ is \sth $\partial_t \eta(\bv) + \partial_x(\bq(\bv)\bn) \le 0$,
    then}                                                     \\
     & \qquad    \eta(\overline\bu_{LR}(\lambda))\le \frac{1}{2} (\eta(\bu_L)+\eta(\bu_R))-
    \frac{1}{2\lambda} (\bq(\bu_R)\bn-\bq(\bu_L)\bn). \nonumber
  \end{align}
\end{lemma}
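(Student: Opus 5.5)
The plan is to integrate the Riemann problem \eqref{Riemann_problem} over the space-time rectangle $(-\tfrac12,\tfrac12)\times(0,t)$ with $t=\frac{1}{2\lambda}$. This establishes \eqref{Riemann_avg:Lem:Riemann_average}. Items \eqref{IDP:Lem:Riemann_average} and \eqref{Entropy_ineq:Lem:Riemann_average} will then follow from it by Jensen's inequality and by the same integration applied to the entropy inequality.

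\textbf{Step 1: the average identity.} Fix $\lambda\ge\lambda_{\max}^\bv$ and set $t=\frac1{2\lambda}$. Then $\lambda_{\max}^\bv t\le \tfrac12$, so by the definition of the maximum wave speed,
\[
\bv(\pm\tfrac{1}{2s})=\bu_R \text{ or } \bu_L \quad\text{at } x=\pm\tfrac12 \text{ for all } s\in(0,t].
\]
Since $\bv$ is a weak solution, I will integrate the conservation law over $(-\tfrac12,\tfrac12)\times(0,t)$. Rigorously, this means testing the weak formulation with a function equal to $1$ on the rectangle and then passing to the limit with cutoffs; self-similarity gives $\bv(\cdot/s)\to$ initial data in $L^1_{\text{loc}}$ as $s\to0$. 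This yields
\[
\overline\bv(t)-\tfrac12(\bu_L+\bu_R) = -t\big(\polg(\bu_R)\bn-\polg(\bu_L)\bn\big),
\]
because the boundary fluxes at $x=\pm\tfrac12$ are constant in time and equal to $\polg(\bu_R)\bn$ and $\polg(\bu_L)\bn$. The initial average is $\tfrac12(\bu_L+\bu_R)$. Substituting $t=\frac1{2\lambda}$ recovers exactly \eqref{def_ubar}.

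\textbf{Step 2: invariance.} If $\calG$ is convex and $\bv(\xi)\in\calG$ for all $\xi$, then $\overline\bv(t)$ is the average of a $\calG$-valued function over an interval of unit length. Jensen's inequality, or equivalently the fact that a closed-convex-hull argument applies to averages of convex-valued integrable functions, gives $\overline\bv(t)\in\calG$. A small subtlety arises if $\calG$ is open: an average of points of an open convex set still lies in the set. This can be shown by supporting-hyperplane separation, since any hyperplane separating the average from $\calG$ would force the integrand onto a measure-zero set. Combined with Step 1, this proves \eqref{IDP:Lem:Riemann_average}.

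\textbf{Step 3: entropy inequality.} I will integrate $\partial_t\eta(\bv)+\partial_x(\bq(\bv)\bn)\le0$ over the same rectangle, in the distributional sense with nonnegative test functions approximating the indicator. This gives
\[
\int_{-1/2}^{1/2}\eta(\bv(\tfrac{x}{t}))\diff x \le \tfrac12(\eta(\bu_L)+\eta(\bu_R)) - t\big(\bq(\bu_R)\bn-\bq(\bu_L)\bn\big).
\]
Since $\eta$ is convex, Jensen's inequality gives $\eta(\overline\bv(t))\le\int\eta(\bv)$. With $t=\frac1{2\lambda}$ and Step 1, the claim follows.

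The main obstacle is the justification of the boundary and initial terms when $\bv$ is only a weak, possibly discontinuous, solution. I will handle it by choosing tensor-product test functions $\phi_\epsilon(x)\psi_\delta(s)$ and exploiting that $\bv$ is constant near $x=\pm\tfrac12$ for $s\le t$. Self-similarity then makes the time-trace at $s=t$ well defined for a.e. $t$; since $\overline\bv$ is continuous in $t$, the identity extends to every $t$.
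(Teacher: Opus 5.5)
Your proposal is correct and matches the argument the paper relies on (the paper does not reproduce it but cites Lemmas~2.1--2.2 of Guermond--Popov, 2016): integrate the conservation law and the entropy inequality over $(-\tfrac12,\tfrac12)\times(0,\tfrac{1}{2\lambda})$, use that $\lambda\ge\lambda_{\max}$ freezes the boundary fluxes at their left/right values, and conclude with convexity and Jensen's inequality. Your treatment of the technicalities is sound: the initial trace via self-similarity, the borderline case $\lambda=\lambda_{\max}$ via continuity in $t$, and the barycenter of an open convex set via hyperplane separation.
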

Let $\bv(\tfrac{x}{t})$ be any self-similar weak solution
to~\eqref{Riemann_problem}, then
\eqref{Riemann_avg:Lem:Riemann_average} says that
$\overline\bu_{LR}(\lambda)$ is equal to the average of $\bv$ over the
interval $(-\frac12,\frac12)$ if $\lambda\ge\lambda_{\max}^\bv$.  As a
result, the statement in~\eqref{IDP:Lem:Riemann_average} says that
under the assumptions that $\calG$ is convex and the weak solution
$\bv$ leaves $\calG$ invariant, then
$\overline\bu_{LR}(\lambda)\in\calG$ as
well. Finally,~\eqref{Entropy_ineq:Lem:Riemann_average} says that if
the weak solution $\bv$ satisfies an entropy inequality for some pair
$(\eta,\bq)$, then $\overline\bu_{LR}(\lambda)$ satisfies a discrete
counterpart of this inequality.

Unfortunately, the Riemann problem~\eqref{Riemann_problem}
cannot be solved analytically in general. But, a key observation made in
\citep{Clayton_Guermond_Popov_SIAM_SISC_2022} that allows us to go around this roadblock is that it is not necessary to solve~\eqref{Riemann_problem} to extract useful
information on the bar states~\eqref{def_ubar}. One can instead
consider a surrogate Riemann problem that is solvable and somewhat
interpolates $\polg$ as we now explain. We take inspiration from~\citep{Clayton_Guermond_Popov_SIAM_SISC_2022} and introduce an
extension technique that will facilitate this interpolation
process.

We assume that we have at hand a new integer $\ws\ge s$, an extension operator
$\Theta: \Real^s \to \Real^{\ws}$, an extended flux
$\wpolg:\wcalD\to \Real^{\ws\CROSS d}$, and  a linear reduction
operator $\Pi: \Real^{\ws}\to \Real^s$, so that the following identities
hold true:
\begin{subequations}\label{extension_reduction_operators}
  \begin{alignat}{2}
     & \Pi(\Theta(\bw)) =\bw,                              &  & \forall\bw\in\calG, \label{extension_reduction_operators:Pi_Theta} \\
     & \Pi(\wpolg(\Theta(\bw))\bn) =  \polg(\bw)\bn, \quad &  & \forall
    \bw\in\calG, \ \forall \bn. \label{extension_reduction_operators:flux}
  \end{alignat}
\end{subequations}
Notice that the triple $(\ws=s,\Pi=\Id,\Theta=\Id)$, where $\Id$ is
the identity operator, trivially satisfies the above assumption,
thereby showing that the class of objects we are considering is not
empty.  Note that
\eqref{extension_reduction_operators:Pi_Theta} implies that $\Theta$
is a right inverse of $\Pi$.  Finally we consider the
following extended Riemann problem:
\begin{equation}
  \partial_t \wbw + \partial_x(\wpolg(\wbw)\bn) = 0,\qquad
  \wbw(x,0)=\begin{cases}
    \Theta(\bu_L) & \text{if $x<  0$,}  \\
    \Theta(\bu_R) & \text{if $0\le x$}.
  \end{cases}\label{extended_Riemann_problem}
\end{equation}
For future reference we also introduce the bar state associated with~\eqref{extended_Riemann_problem}:
\begin{equation}
  \overline\wbu_{LR}(\lambda)\eqq \frac{1}{2} (\Theta(\bu_L)+\Theta(\bu_R)) -
  \frac{1}{2\lambda}
  (\wpolg(\Theta(\bu_R))\bn-\wpolg(\Theta(\bu_L))\bn).
  \label{def_bar_hat_uLR}
\end{equation}
Of course the above somewhat obscure construction has the potential
to be useful only if solving~\eqref{extended_Riemann_problem} is
significantly easier than solving~\eqref{Riemann_problem}.  We show
below that it is indeed the case for the
problem~\eqref{hyperbolic_one}.  The following result is essential
and shows how the bar states $\overline\wbu_{LR}(\lambda)$ and
$\overline\bu_{LR}(\lambda)$ are related.
\begin{lemma}[Extended bar state]\label{Lem:extended_bar_state} Assume
  that the assumptions~\eqref{extension_reduction_operators} are met.
  Then the following identity holds true for all pairs $(\bu_L,\bu_R)\in
    \calG^2$ and all $\lambda>0$:
  \begin{equation}
    \Pi(\overline\wbu_{LR}(\lambda)) =
    \overline\bu_{LR}(\lambda). \label{eq:Lem:extended_bar_state}
  \end{equation}
\end{lemma}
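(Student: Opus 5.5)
The identity \eqref{eq:Lem:extended_bar_state} follows by applying $\Pi$ to the definition \eqref{def_bar_hat_uLR}. The plan uses only two facts: $\Pi$ is linear, and the two compatibility identities \eqref{extension_reduction_operators} hold. No property of the Riemann problem \eqref{extended_Riemann_problem} is needed; the statement is purely algebraic.

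First, fix $(\bu_L,\bu_R)\in\calG^2$, a unit vector $\bn$, and $\lambda>0$. Since $\Pi:\Real^{\ws}\to\Real^s$ is linear, applying it to \eqref{def_bar_hat_uLR} gives
\[
\Pi(\overline\wbu_{LR}(\lambda)) = \tfrac12\big(\Pi(\Theta(\bu_L))+\Pi(\Theta(\bu_R))\big) - \tfrac{1}{2\lambda}\big(\Pi(\wpolg(\Theta(\bu_R))\bn)-\Pi(\wpolg(\Theta(\bu_L))\bn)\big).
\]
Second, use \eqref{extension_reduction_operators:Pi_Theta} to replace $\Pi(\Theta(\bu_Z))$ by $\bu_Z$ for $Z\in\{L,R\}$. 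This is legitimate because $\bu_L,\bu_R\in\calG$.

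Third, use \eqref{extension_reduction_operators:flux} to replace $\Pi(\wpolg(\Theta(\bu_Z))\bn)$ by $\polg(\bu_Z)\bn$. The resulting expression is exactly the right-hand side of \eqref{def_ubar}, i.e., $\overline\bu_{LR}(\lambda)$, which proves the lemma.

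The only point requiring care is that every quantity is well defined. The fluxes $\wpolg(\Theta(\bu_L))$ and $\wpolg(\Theta(\bu_R))$ only make sense if $\Theta(\calG)\subset\wcalD$; I read this as implicit in assumption \eqref{extension_reduction_operators:flux}, which already evaluates $\wpolg$ at $\Theta(\bw)$ for every $\bw\in\calG$. Beyond that there is no real obstacle; the argument is a two-line substitution.
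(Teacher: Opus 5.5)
Your proposal is correct and matches the paper's proof: apply $\Pi$ to the definition \eqref{def_bar_hat_uLR}, use the linearity of $\Pi$, and substitute the two identities in \eqref{extension_reduction_operators}. Your remark that $\Theta(\calG)$ must lie in the domain of the extended flux is a reasonable reading of what the assumptions implicitly require.
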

\begin{proof}
  Apply the operator $\Pi$ on both sides of the definition~\eqref{def_bar_hat_uLR}, use the
  linearity of $\Pi$, and conclude using~\eqref{extension_reduction_operators}.
\end{proof}

Hence, to establish that $\overline\bu_{LR}(\lambda)\in\calG$, it
suffices to find a pair of operators
$(\Pi,\Theta)$ and an extended flux $\wpolg$ for which
the extended Riemann problem \eqref{extended_Riemann_problem} can be easily solved, and such that
$\Pi(\overline\wbu_{LR}(\lambda)) \in\calG$.

\begin{remark}[Literature] \label{Rem:literature_on_barstates} The states $\overline{\bu}_{LR}(\lambda)$
  are the backbone of Lax's scheme.  The importance of these states
  has been recognized in \cite[Eq.~(2.6)]{Nessyahu_Tadmor_1990}.  It
  is established in \cite[\S3.A]{Harten_Lax_VanLeer_1983} that these
  states are averages of Riemann solutions provided
  $\lambda\ge \lambda_{\max}$.  The idea of extending the Riemann
  problem to simplify the estimation of $\lambda_{\max}$
  (see~\eqref{extension_reduction_operators}) has its origins in
  \cite[\S3]{Clayton_Guermond_Popov_SIAM_SISC_2022} where this
  construction is used to estimate a guaranteed upper bound on the
  maximum wave speed in the Riemann problem associated with the
  compressible Euler equation supplemented with an arbitrary equation
  of state. The above abstract construction with the operators $\Pi$,
  $\Theta$ and the extended flux $\wpolg$ generalizes
  \citep[\S3]{Clayton_Guermond_Popov_SIAM_SISC_2022}.
\end{remark}

\subsection{Extended flux and Riemann problem}

We present in this section one possible extension of the Riemann
problem~\eqref{Riemann_problem} with $\polg$ defined in
\eqref{hyperbolic_one}.  This is done by proceeding as in
\citep[\S3]{Clayton_Guermond_Popov_SIAM_SISC_2022}. For simplicity we assume that the
pressure oracle is such that the pressure is positive and the cold curve is zero.
We refer to \cite{Clayton_Tovar_2025} and \citep{Guermond_Popov_Saavedra_Sheridan_CMAME_2025}
for generalizations removing these restrictions but still using the above theoretical setting.

We set
$s\eqq d+3$ and $\ws\eqq s+1$. We
define the operators $\Pi$ and $\Theta$ as follows:
\begin{subequations}
  \begin{align}
    \Pi: \Real^{\ws} \ni\wbu \eqq (\rho,\bbm\tr,\Emech,\ER,\Gamma)\tr & \mapsto
    \Pi(\wbu)\eqq (\rho,\bbm\tr,\Emech,\ER)\tr\in \Real^s,                                                                     \\
    \Theta: \Real^s \ni \bu \eqq (\rho,\bbm\tr,\Emech,\ER)\tr         & \mapsto
    \Theta(\bu) \eqq (\rho,\bbm\tr,\Emech,\ER,\Gamma(\bu))\tr\in\Real^{\ws}.                                                   \\
                                                                      & \qquad  \text{with}\quad \Gamma(\bu)\eqq \rho + p(\bu)
    \frac{1-b\rho}{e(\bu)}.\nonumber
  \end{align}
\end{subequations}
Then, given $\wbu\eqq (\rho,\bbm\tr,\Emech,\ER,\Gamma)\tr$, we define the
extended flux
\begin{align}
  \wpolg(\wbu)\eqq \begin{pmatrix}
                     \bv\rho \\ \bv\otimes\bbm + \wp(\wbu)\polI_d \\ \bv(E\lo{m}+\wp(\wbu)) \\
                     \bv \ER \\
                     \bv\Gamma\end{pmatrix},\quad \text{with}\quad \wp(\wbu)\eqq
  \frac{(\Gamma-\rho)}{1-b\rho}e(\Pi(\wbu)).
  \label{def_flux_radiation_hyp}
\end{align}

\begin{lemma} The operators $\Pi$, $\Theta$, and $\wpolg$ satisfy the assumptions
  \eqref{extension_reduction_operators}.
\end{lemma}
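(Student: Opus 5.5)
The plan is to verify the two identities in \eqref{extension_reduction_operators} by direct substitution, using only the definitions of $\Pi$, $\Theta$, $\wpolg$ and $\wp$. Both checks are componentwise computations. The one nontrivial point is that the extended pressure $\wp$ evaluated at $\Theta(\bw)$ gives back the oracle pressure $p(\bw)$.

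\textbf{First identity \eqref{extension_reduction_operators:Pi_Theta}.} Let $\bw=(\rho,\bbm\tr,\Emech,\ER)\tr\in\calG$. Then $\Theta(\bw)=(\rho,\bbm\tr,\Emech,\ER,\Gamma(\bw))\tr$, and $\Pi$ removes the last component. Hence $\Pi(\Theta(\bw))=\bw$ immediately. We also record that $\Pi$ is linear, since it is a coordinate projection; this is what Lemma~\ref{Lem:extended_bar_state} requires.

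\textbf{Key step: recovering the pressure.} Write $\wbu=\Theta(\bw)$. The first $s$ components of $\wbu$ are those of $\bw$, so $\Pi(\wbu)=\bw$ and $e(\Pi(\wbu))=e(\bw)$. Since $\Gamma(\bw)-\rho=p(\bw)\frac{1-b\rho}{e(\bw)}$, we get
\[
\wp(\Theta(\bw))=\frac{\Gamma(\bw)-\rho}{1-b\rho}\,e(\bw)=p(\bw).
\]
This cancellation needs $1-b\rho>0$ and $e(\bw)\neq0$, so that $\Gamma$ is well defined and nothing is divided by zero. I will use the standing simplification made just before the lemma, namely that the cold curve is zero. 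Then membership in $\calB(b)$ gives $e(\bw)>e\lo{cold}(\rho)=0$, and the maximum compressibility condition gives $1-b\rho>0$. Confirming that these hypotheses hold on the whole domain where $\Theta$ is used is the only real obstacle, and it is mild.

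\textbf{Second identity \eqref{extension_reduction_operators:flux}.} Fix a unit vector $\bn$ and compute $\wpolg(\Theta(\bw))\bn$ row by row. The velocity is $\bv=\bbm/\rho$ in both the extended and the original variables, and $\wp(\Theta(\bw))=p(\bw)$ by the key step. The rows are then:
\begin{itemize}
\item density: $(\bv\SCAL\bn)\rho$;
\item momentum: $(\bv\SCAL\bn)\bbm+p(\bw)\bn$;
\item mechanical energy: $(\bv\SCAL\bn)(\Emech+p(\bw))$;
\item radiation energy: $(\bv\SCAL\bn)\ER$;
\item extra row: $(\bv\SCAL\bn)\Gamma(\bw)$.
\end{itemize}
Applying $\Pi$ discards the last row. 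What remains is exactly $\polg(\bw)\bn$ with $\polg$ from \eqref{hyperbolic_one}, which proves the second identity.
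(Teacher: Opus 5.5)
Your proof is correct and follows essentially the same direct verification as the paper. The first identity is immediate because $\Pi$ discards the appended $\Gamma$ component. The second reduces to the cancellation $\wp(\Theta(\bu))=\frac{p(\bu)}{e(\bu)}e(\bu)=p(\bu)$, after which the first $s$ rows of $\wpolg(\Theta(\bu))\bn$ coincide with $\polg(\bu)\bn$. You also make explicit that this cancellation uses $e(\bu)>0$ and $1-b\rho>0$, which follow from the zero-cold-curve simplification and membership in $\calB(b)$; the paper leaves this implicit.
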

\begin{proof}
  Let us verify that
  \eqref{extension_reduction_operators:Pi_Theta} holds. Let
  $\bu\eqq(\rho,\bbm,\Emech,\ER)\tr\in\Real^s$.
  Then
  \begin{align*}
    \Pi(\Theta(\bu))= \Pi((\rho,\bbm,\Emech,\ER,\Gamma(\bu))\tr)=(\rho,\bbm,\Emech,\ER)\tr=\bu.
  \end{align*}
  Let us now verify that~\eqref{extension_reduction_operators:flux}
  holds. Let $\bu\eqq(\rho,\bbm,\Emech,\ER)\tr$ be an arbitrary state in
  $\calB(b)$. Then we observe that
  \begin{align*}
    \wp(\Theta(\bu))
    =\frac{(\Gamma(\bu)-\rho)}{1-b\rho}e(\Pi(\Theta(\bu)))
    = \frac{p(\bu)}{e(\bu)} e(\bu)= p(\bu).
  \end{align*}
  This gives
  $\wpolg(\Theta(\bu))\eqq\left(
    \bv\tr\rho, \bv\otimes\bbm + p(\bu)\polI_d, \bv\tr(\Emech+p(\bu)),
    \bv\tr \ER,\bv\tr\Gamma(\bu)\right)\tr$,
  and the identity~\eqref{extension_reduction_operators:flux} readily follows.
\end{proof}

\begin{lemma}[Existence\&uniqueness]
  For all pairs of states $(\bu_L,\bu_R)$ in $\calA(b)$, the extended
  Riemann problem~\eqref{extended_Riemann_problem} has a unique
  self-similar solution $\bv$ that is entropic in the sense of Lax (see
  \citep[\S7]{Lax_1957_II}), and $\calA(b)$ in invariant for $\Pi(\bv)$
\end{lemma}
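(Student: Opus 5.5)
The plan is to show that the extended flux $\wpolg(\wbw)\bn$ makes \eqref{extended_Riemann_problem} a system whose pressure law is an ideal-gas-like (covolume) law. Its Riemann problem is then classical and solvable in closed form.

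\textbf{Reduction to one dimension.} First I will reduce to one dimension by rotating the velocity into $(\bv\SCAL\bn,\bv^\perp)$. In the extended system the tangential velocity, $\ER/\rho$ and $\Gamma/\rho$ are all passively advected by $\bv\SCAL\bn$. Writing $\Gamma=\rho+\rho\,\gamma'$, where $\gamma'\eqq(\Gamma-\rho)/\rho$, the pressure $\wp=\frac{(\Gamma-\rho)}{1-b\rho}e=\frac{\rho\gamma' e}{1-b\rho}$ is exactly the covolume ideal-gas law with an advected ``$\gamma-1$'' field, namely $\gamma'$. Using $\pi^1$ to absorb the tangential kinetic energy, the problem therefore reduces to the one-dimensional Euler equations with a covolume EOS. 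This system is augmented by linearly degenerate advected quantities ($\ER/\rho$, $\gamma'$, $\bv^\perp$), which jump only across the contact.

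\textbf{Solving the Riemann problem.} Within each uniform-$\gamma'$ region (left of the contact, and right of the contact) the system is the standard covolume gas, which is strictly hyperbolic with genuinely nonlinear acoustic fields and a linearly degenerate contact. For this system the Riemann problem is solved as in Toro's construction: the structure is rarefaction or shock, then contact, then rarefaction or shock. The contact carries different $\gamma'_L,\gamma'_R$ on its two sides, which is harmless because pressure and normal velocity are continuous there. I will write the star-pressure equation $\phi(p^*)\eqq f_L(p^*)+f_R(p^*)+v_R-v_L=0$, where $f_Z$ is the Hugoniot/rarefaction curve of side $Z$ with its own $\gamma'_Z$. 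I will show that $\phi$ is continuous and strictly increasing on $[0,\infty)$ with $\phi(+\infty)=+\infty$. This gives a unique root when $\phi(0)<0$. When $\phi(0)\ge 0$ the solution contains a vacuum, with two rarefactions. Uniqueness among Lax-entropic self-similar solutions then follows from uniqueness of this wave-curve intersection, since each Lax wave curve is uniquely parametrized by $p^*$. The main obstacle is this monotonicity and non-vacuum bookkeeping, in particular checking that the covolume shock curves remain monotone for $p^*\to\infty$ (densities stay below $1/b$). I would handle it by citing \citep[\S3]{Clayton_Guermond_Popov_SIAM_SISC_2022}, where the same extended system without $\ER$ is treated. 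Adding the passive scalar $\ER$ does not alter the wave structure.

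\textbf{Invariance of $\calA(b)$ under $\Pi(\bv)$.} Finally I will check that every state in $\bv$ satisfies the invariant-domain constraints.
\begin{enumerate}
\item \emph{Density.} Across rarefactions, $\rho$ stays between its endpoint value and the star value, which is positive. Across covolume shocks, the Hugoniot relation gives $\rho^*<1/b$.
\item \emph{Maximum compressibility.} The condition $1-b\rho>0$ holds by the same shock and rarefaction bounds.
\item \emph{Internal energy.} We have $e=\wp(1-b\rho)/(\rho\gamma')>0=e\lo{cold}$, because $\wp\ge0$ and $\gamma'>0$ (the latter from $p(\bu)>0$ and $e(\bu)>0$ in the initial data, together with transport of $\gamma'$).
\item \emph{Radiation energy.} $\ER=\rho\,(\ER/\rho)$ is the product of a positive density and the advected positive quantity $\ER/\rho$ taken from $\bu_L$ or $\bu_R$, so $\ER>0$.
\end{enumerate}
Hence $\Pi(\bv(\xi))\in\calA(b)$ for all $\xi$.
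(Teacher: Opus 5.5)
Your proposal is correct and follows the paper's route: check that $\Theta(\bu_L),\Theta(\bu_R)$ are admissible (i.e.\ $\Gamma_Z/\rho_Z>1$), observe that the radiation energy decouples from the gas dynamics, and invoke the Clayton--Guermond--Popov construction for the covolume gas with advected $\gamma=\Gamma/\rho$; your explicit invariance checks just unpack what the paper delegates to that reference. Your use of the advected ratio $\ER/\rho$ is more accurate than the paper's remark that $\ER$ itself stays constant on each side of the contact. One caveat: when vacuum forms, $\Pi(\bv)$ lies only in the closure of $\calA(b)$ (since $\rho=0$ there), an imprecision your conclusion shares with the statement itself.
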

\begin{proof}
  The construction of the solution is essentially the same as \S4 in
  \citep{Clayton_Guermond_Popov_SIAM_SISC_2022} with the exception
  that now the extended state contains the radiation energy.

  Let us start by verifying that the states
  $\Theta(\bu_L),\Theta(\bu_R)$ are admissible to be able to use this
  construction. Let $Z$ be in the index set $\{L,R\}$.  By definition
  we have
  $\wbu_Z\eqq\Theta(\bu_Z)\eqq(\rho_Z,\bbm_Z, (\Emech)_Z , (\ER)_Z,
    \Gamma(\bu_Z))\tr$.  As $\bu_Z\in\calA(b)$, we have $e(\bu_Z)>0$,
  and owing to the assumption~\eqref{Pressure_is_nonnegative}, the
  tuple $(\rho_Z,\bbm_Z, (\Emech)_Z, \Gamma(\bu_Z))\tr$ is such
  that $\Gamma_Z\eqq \Gamma(\bu_Z)>1$. Notice that the radiation
  energy is a passive scalar (\ie
  $\partial_t\ER + (\bu\SCAL\bn)\partial_\bx \ER =0$); hence, the
  radiation energy stays constant on each side of the contact wave,
  and it is therefore necessarily positive since $\bu_Z\in\calA(b)$.
  As $\ER$ is just a passive scalar in the Riemann problem and it is
  not coupled with the other components of the Riemann solution, we
  can apply the theory explained in \S4 in
  \citep{Clayton_Guermond_Popov_SIAM_SISC_2022} to construct a unique
  self-similar solution $\bv$ that is entropic in the sense of
  Lax. This solution satisfies $\Pi(\bv)\in \calB(b)$.
\end{proof}

\begin{corollary}[Bar states]
  For all $(\bu_L,\bu_R)$ in $\calA(b)$ and all unit vector $\bn$ in $\Real^d$, let
  $\wlambda_{\max}(\bn,\bu_L,\bu_R)$ be any upper bound on the maximum
  wave speed in the extended Riemann problem
  ~\eqref{extended_Riemann_problem}.  Let
  $\overline\bu_{LR}(\lambda)$ be  the bar state defined in
  ~\eqref{def_ubar}.
  Then $\overline\bu_{LR}(\lambda)\in\calA(b)$
  for all $\lambda\ge \wlambda_{\max}(\bn,\bu_L,\bu_R)$.
\end{corollary}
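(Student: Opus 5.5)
The plan is to combine Lemma~\ref{Lem:extended_bar_state} with Lemma~\ref{Lem:Riemann_average} applied to the \emph{extended} Riemann problem~\eqref{extended_Riemann_problem}, and then project back with $\Pi$.

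Fix $(\bu_L,\bu_R)\in\calA(b)^2$, a unit vector $\bn$, and $\lambda\ge\wlambda_{\max}(\bn,\bu_L,\bu_R)$. By the existence and uniqueness lemma just established, \eqref{extended_Riemann_problem} with data $\Theta(\bu_L)$, $\Theta(\bu_R)$ has a unique self-similar Lax-entropic solution $\bv$. Moreover, $\Pi(\bv(\xi))\in\calA(b)$ for all $\xi\in\Real$. Since $\wlambda_{\max}$ bounds the maximum wave speed of $\bv$ from above, we have $\lambda\ge\lambda^\bv_{\max}$.

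Apply \eqref{Riemann_avg:Lem:Riemann_average} of Lemma~\ref{Lem:Riemann_average} to the extended system, with flux $\wpolg$ and states $\Theta(\bu_L)$, $\Theta(\bu_R)$. The proof of that identity only uses the weak formulation of the Riemann problem on the space-time box $(-\frac12,\frac12)\times(0,\frac1{2\lambda})$, so no convexity of the extended domain is needed. It gives
\[
\overline\wbu_{LR}(\lambda)=\int_{-\frac12}^{\frac12}\bv\big(\tfrac{2\lambda x}{1}\big)\diff x ,
\]
that is, the average of $\bv(\cdot,\frac1{2\lambda})$ over $(-\frac12,\frac12)$.

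Next, apply the linear operator $\Pi$ and commute it with the integral. Combined with Lemma~\ref{Lem:extended_bar_state}, this yields
\[
\overline\bu_{LR}(\lambda)=\Pi(\overline\wbu_{LR}(\lambda))=\int_{-\frac12}^{\frac12}\Pi(\bv(2\lambda x))\diff x .
\]
Thus $\overline\bu_{LR}(\lambda)$ is an average of states lying in $\calA(b)$.

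The remaining step, which I expect to be the main obstacle, is to check that $\calA(b)$ is convex, so that Jensen's inequality closes the argument:
\begin{itemize}
\item The constraints $\rho>0$, $1-b\rho>0$, and $\ER>0$ are linear, so they define convex sets.
\item For the constraint $e(\bu)>e\lo{cold}(\rho)$, write it as $\Emech-\frac{\|\bbm\|^2}{2\rho}-\rho\, e\lo{cold}(\rho)>0$. The map $(\rho,\bbm)\mapsto\frac{\|\bbm\|^2}{2\rho}$ is convex (it is a perspective function).
\item Under the simplifying assumption of this appendix, namely zero cold curve, the last term vanishes and the set is convex.
\item In general one needs $\rho\mapsto\rho\, e\lo{cold}(\rho)$ to be convex, or must invoke the quasiconcavity hypothesis as in \citep{Clayton_Guermond_Popov_SIAM_SISC_2022}. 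I would follow that reference, or simply restrict to the zero cold-curve setting assumed above.
\end{itemize}

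Finally, since the set is convex and open, and the average is over a probability measure of states inside it, the average lies in $\calA(b)$. One should also note that the averaged states sit in the interior and not on the boundary. This holds because $\bv$ takes values in the open set $\calA(b)$ and is piecewise smooth, so strict inequalities are preserved under averaging. This concludes the corollary.
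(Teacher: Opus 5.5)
Your argument is correct and follows the route the paper intends: apply the averaging identity \eqref{Riemann_avg:Lem:Riemann_average} to the extended Riemann problem~\eqref{extended_Riemann_problem}, project with the linear operator $\Pi$ via Lemma~\ref{Lem:extended_bar_state}, and conclude from $\Pi(\bv(\xi))\in\calA(b)$ together with the convexity of $\calA(b)$, which under the appendix's zero cold-curve assumption follows from the concavity of $(\rho,\bbm,\Emech)\mapsto\Emech-\frac{\|\bbm\|_{\ell^2}^2}{2\rho}$. Your closing appeal to piecewise smoothness is unnecessary: Jensen's inequality for this concave function already gives the strict inequalities, as does the fact that the barycenter of a probability measure carried by an open convex set lies in that set.
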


Note that the definition of the extended pressure in
\eqref{def_flux_radiation_hyp} makes the extended Riemann
problem~\eqref{extended_Riemann_problem} easy to solve for any
pressure oracle satisfying \eqref{Pressure_is_nonnegative}.  The
oracle is only invoked to compute the two pressures $p_L$ an $p_R$.
On the left of the contact wave, the ratio
$\gamma\eqq \frac{\Gamma}{\rho}$ is constant and  equal to
$\gamma_L\eqq 1 + p_L\frac{1-b\rho_L}{\rho_Le_L}$, the extended
pressure is equal to $(\gamma_L-1)\frac{\rho e(\Pi\wbu)}{1-b\rho}$.
On the right of the contact wave, the ratio $\gamma\eqq
  \frac{\Gamma}{\rho}$ is also constant
and equal to $\gamma_R\eqq 1  + p_R\frac{1-b\rho_R}{\rho_Re_R}$, the extended
pressure is equal to $(\gamma_R-1)\frac{\rho e(\Pi\wbu)}{1-b\rho}$.
A source code providing the upper bound
$\wlambda_{\max}(\bn,\bu_L,\bu_R)$ is publicly available at
\cite{guermond_jean_luc_2021_4685868}.

\section{Second Riemann problem} \label{Sec:Second_Riemann_Problem}
We study the Riemann problem associated with the hyperbolic system
\eqref{hyperbolic_two} and derive an upper bound on the maximum
wave speed for this Riemann problem. The main result of this section
is Lemma~\ref{Lem:wave_speed_upper_bound}.

\subsection{Formulation of the problem}
Let $\bn$ be a unit vector in
$\Real^d$.
Let $\bsfu_L\eqq (\varrho_L,\bsfm_L\tr,\sfE_{\sfm,L},\sfE_{\sfr,L})\tr$
and $\bsfu_R\eqq (\varrho_R,\bsfm_R\tr,\sfE_{\sfm,R},\sfE_{\sfr,R})\tr$ be given
left and right states. Let us set $v_L\eqq \bsfv_L\SCAL\bn$,
and $E_L\eqq \sfE_{\sfr,L} +
  \frac12\rho_L\sfv_L^2$.
Similarly we define $v_R\eqq \bsfv_R\SCAL\bn$,
and $E_R\eqq \sfE_{\sfr,R} +
  \frac12\rho_R\sfv_R^2$.
Then, the Riemann problem associated with the hyperbolic system
\eqref{hyperbolic_two} reduces to solving
\begin{subequations}\label{app_hyperbolic_two}
  \begin{align}
     & \partial_t\rho = 0                                            \\
     & \partial_t (\rho v) +\partial_x p_\sfr=0,\qquad  p_{\sfr}\eqq
    \tfrac13( E-\tfrac{\rho}{2}v^2),                                 \\
     & \partial_t E + \partial_x (v p_\sfr) =0,
  \end{align}\end{subequations}
with left and right states $(\varrho_L,\sfv_L,\sfE_L)\tr$ and
$(\varrho_R,\sfv_R,\sfE_R)\tr$, respectively.  Using the change of
variables
$(\bv,E)\tr \mapsto (v,E_\sfr \eqq E-\tfrac{\rho}{2}v^2)\tr$, the
nontrivial part of the above system can be rewritten in the following form
for which the computation of the eigenvalues of the Jacobian matrix of the flux is easier
\[
  \partial_t \begin{pmatrix}v\\ E_\sfr\end{pmatrix} =
  -\frac13 \begin{pmatrix}\frac{1}{\rho} \partial_x E_\sfr \\
    E_\sfr\partial_x v\end{pmatrix} = -\frac13 \begin{pmatrix} 0 &
                \frac{1}{\rho} \\ E_\sfr & 0\end{pmatrix} \partial_x \begin{pmatrix}v\\ E_\sfr\end{pmatrix}.
\]
The eigenvalues of the Jacobian matrix
are $\pm\frac13 \sqrt{E_\sfr/\rho}$.  Hence, the system
\eqref{app_hyperbolic_two} is hyperbolic; the three eigenvalues are
$-\frac13\sqrt{E_\sfr/\rho}$, $0$, and $\frac13\sqrt{E_\sfr/\rho}$.
The eigenvalues $-\frac13\sqrt{E_\sfr/\rho}$ and
$\frac13\sqrt{E_\sfr/\rho}$ are genuinely nonlinear. The eigenvalue
$0$ is linearly degenerate and is associated with a contact wave.

\subsection{Rarefaction wave}

We first construct the rarefaction solution associated with the left
wave. Recall that the density is equal to $\rho_L$ in the left wave.
Let $\xi\eqq x/t$ be the self-similarity variable. Let us abuse the
notation and let us denote $v(\xi)$ the velocity and $p(\xi)$ the
pressure. The momentum conservation equation reduces to
$-\xi \partial_\xi v + \rho_L^{-1}\partial_\xi p=0$. Recalling that
$E_\sfr = 3 p$, we have $\xi = -\sqrt{p/3\rho_L}$ in the left wave.
The conservation equation is
\begin{equation}
  \sqrt{\frac{\rho_L}{12}}\partial_\xi v + \frac{1}{2\sqrt{p}}\partial_\xi p =0.
\end{equation}
This implies that the dependency with respect to the pressure of the
velocity and the wave speed in the left rarefaction wave is given by
\begin{equation}
  v = -\sqrt{\frac{12}{\rho_L}}\left(\sqrt{p} -\sqrt{p_L} \right) + v_L,
  \qquad \lambda_L(p) = -\sqrt{\frac{p}{3\rho_L}}.
\end{equation}
Using the same argument, and recalling that the self-similar variable
is $\xi = \sqrt{p/3\rho_R}$ in the right wave, we obtain that the
dependency with respect to the pressure of the velocity and the
wave speed in the right rarefaction wave is given by
\begin{equation}
  v = \sqrt{\frac{12}{\rho_R}}\left(\sqrt{p}- \sqrt{p_R}\right) + v_R,
  \qquad \lambda_R(p) =\sqrt{\frac{p}{3\rho_R}}.
\end{equation}

\subsection{Shock wave}

If the left wave is a shock, the Rankine-Hugoniot relation
implies that there exists $s<0$ so that
\begin{equation}
  s\rho_L(v-v_L)=p-p_L,\qquad s(3 p+\tfrac12 \rho_L v^2-3 p_L-\tfrac12
  \rho_Lv_L^2)= vp-v_Lp_L.
  \label{RH_left_shockwave}
\end{equation}
where used that $E=3 p+\frac12\rho_L v^2$. Hence
\begin{align*}
  (3 p+\tfrac12 \rho_L v^2-3 p_L-\tfrac12 \rho_Lv_L^2)(p-p_L)    & = \rho_L( vp-v_Lp_L) (v-v_L) \\
  & \implies \\ 
  - \tfrac12 \rho_L (p+p_L) v^2 +\rho_Lv_L(p_L+p) v + 3(p-p_L)^2 & -\tfrac12 \rho_Lv_L^2(p+p_L)=0.
\end{align*}
The discriminant of the quadratic equation in $v$ is
\[
  \Delta \eqq
  6(p-p_L)^2 \rho_L (p+p_L).
\]
Recalling that $s<0$ and the pressure increases  along the
shock curve (\ie $p\ge p_L)$, we conclude from \eqref{RH_left_shockwave}  that the
solution to the quadratic equation must be such that $v-v_L\le 0$;
hence, the velocity and the wave speed in left shock solution are given by
\[
  v
  = v_L - \sqrt{\frac{6}{\rho_L}} \frac{(p-p_L)}{\sqrt{p+p_L}},
  \qquad
  \lambda_L(p) = -\sqrt{\frac{p+p_L}{6\rho_L}}.
\]
Similarly, the shock solution in the right wave is given by
\[
  v
  = v_R + \sqrt{\frac{6}{\rho_R}} \frac{(p-p_R)}{\sqrt{p+p_R}},
  \qquad \lambda_R(p) =\sqrt{\frac{p+p_R}{6\rho_R}}.
\]

\subsection{Description of the solution}

We define the index set $\{L,R\}$, and for all $Z$ in the index set $\{L,R\}$ we define
the function $f_Z:\Real_{\ge 0}\to\Real$
\begin{equation}
  f_Z(p) =
  \begin{cases}
    \sqrt{\frac{6}{\rho_Z}} \frac{(p-p_Z)}{\sqrt{p+p_Z}}      & \text{if
    $p_Z\le p$ (shock)},                                                 \\
    \sqrt{\frac{12}{\rho_Z}}\left(\sqrt{p} -\sqrt{p_Z}\right) & \text{if
      $p\le p_Z$ (expansion).}
  \end{cases}
\end{equation}
This definition implies that the velocity in the left wave is given by
$v=-f_L(p)+v_L$ and the velocity in the right wave is given by
$v=f_R(p)+v_R$. The left and right waves can be continuously reconnected only if there exists
$p^*\ge 0$ so that $f_R(p^*)+v_R=-f_L(p^*) + v_L$. We thus define
$\phi(p) = f_R(p)+f_L(p) +v_R- v_L$ for all $p\ge 0$.
The function $\phi$ is monotone strictly increasing.
Hence, the pressure $p^*$ connecting the left and right waves solves the nonlinear equation $\phi(p^*)=0$.
Once $p^*$ is found, the extreme wave speed of the left and right
waves are
\begin{equation}
  \lambda_L^- = -\sqrt{\frac{p_L+ \max(p^*,p_L)}{6\rho_L}},
  \qquad \lambda_R^+ =\sqrt{\frac{p_R+\max(p^*,p_R)}{6\rho_R}}.\label{def_lambda_L_R}
\end{equation}
We finish this section by showing how $p^*$ can be estimated from above.

\subsubsection{Case 0: Vacuum, $p^*=0$} If $\phi(0)>0$, then the equation
$\phi(p) =0$ has no root. This means that vacuum forms between the
left and the right waves.  Vacuum forms when
\begin{equation}
  v_R-v_L
  -\sqrt{\frac{12\, p_L}{\rho_L}}-\sqrt{\frac{12\, p_R}{\rho_R}}>0.
\end{equation}
In this case we conventionally set $p^*=0$. Note that if $\phi(0)=0$,
then $p^*=0$ is the unique solution. The left and right waves are both
expansions.

\subsubsection{Case 1: $0<p^*$ and $0<\phi(p_{\min})$}
Let us denote $p_{\min}\eqq \min(p_L,p_R)$.
The condition
$0<\phi(p_{\min})$ implies that the left and the right waves are
both expansions (since $p^{*} < p_{\min}$), and we have
\begin{equation}
  p^* = \left(\frac{v_L-v_R +\sqrt{\frac{12\, p_L}{\rho_L}}
    +\sqrt{\frac{12\, p_R}{\rho_R}}}{\sqrt{\frac{12}{\rho_L}}+\sqrt{\frac{12}{\rho_R}}}\right)^2.
\end{equation}

\subsubsection{Case 2: $\phi(p_{\min})< 0<\phi(p_{\max})$} Let us
denote  $p_{\min}\eqq \min(p_L,p_R)$ and  $p_{\max}\eqq \max(p_L,p_R)$. The
solution is composed of an expansion and a shock when $\phi(p_{\min})<
  0<\phi(p_{\max})$.
The root of $\phi(p) = 0$ can be computed by using verbatim Algorithm~2
from~\citep{Guermond_Popov_2016_JCP}. The algorithm can be initialized
by using $p_{\min}$ and $p_{\max}$ as lower and upper bounds on $p^*$,
respectively.

\subsubsection{Case 3: $\phi(p_{\max})<0$}
The solution is composed of two shocks when $\phi(p_{\max})<0$.
That is,
\[
  \phi(p)= \sqrt{\frac{6}{\rho_L}}\frac{p-p_L}{\sqrt{p+p_L}}
  + \sqrt{\frac{6}{\rho_R}}\frac{p-p_R}{\sqrt{p+p_R}} + v_R-v_L.
\]
Note that an upper bound on $p^*$ can be
obtained by computing the zero of the following function:
\[
  \widetilde \phi(p)\eqq \sqrt{\frac{3}{\rho_L}}\frac{p-p_L}{\sqrt{p}}
  + \sqrt{\frac{3}{\rho_R}}\frac{p-p_R}{\sqrt{p}} + v_R-v_L,
\]
since $\phi(p)>\widetilde \phi(p)$ for all $p>p_{\max}$. The unique zero of $\widetilde \phi(p)=0$
is obtained by making the substitution $\sqrt{p}\rightarrow x$ and solving the quadratic equation $a x^2 + b x + c=0$ with
\[
  a\eqq  \sqrt{\frac{3}{\rho_L}}+\sqrt{\frac{3}{\rho_R}},\quad
  b\eqq v_R-v_L,\qquad
  c \eqq -\sqrt{\frac{3}{\rho_L}}p_L-\sqrt{\frac{3}{\rho_R}}p_R.
\]
Letting $x_+$ be the largest root of the quadratic equation, and setting $\widetilde p^*\eqq x_+^2$,
we necessarily have $p^*< \widetilde p^*$.
Then, the zero of $\phi$ can be computed by using verbatim Algorithm~2
from \citep{Guermond_Popov_2016_JCP} and initializing the algorithm
with $p_{\max}$ as lower bound and $\widetilde p^*$ as upper bound.

\subsection{Conclusion}

The main result of this section is the following result.

\begin{lemma}[Wave speed uper bound]\label{Lem:wave_speed_upper_bound}
  With $p^*$ defined above and the definitions~\eqref{def_lambda_L_R},
  a guaranteed upper bound on the maximum wave speed in the Riemann
  problem~\eqref{app_hyperbolic_two}
  is given by
  \begin{equation}
    \wlambda_{\max}(\bn,\bw_L,\bw_R) = \max(-\lambda_L^-(p^*),\lambda_R^+(p^*)).
  \end{equation}
\end{lemma}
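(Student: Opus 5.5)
The plan is to identify the wave structure of the Riemann problem~\eqref{app_hyperbolic_two} and show that the two extreme waves travel at most at the speeds $\lambda_L^-$ and $\lambda_R^+$ from \eqref{def_lambda_L_R}.

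The density satisfies $\partial_t\rho=0$, so it is frozen: it equals $\rho_L$ for $x<0$ and $\rho_R$ for $x>0$. The only wave at $\xi=0$ is the contact associated with the zero eigenvalue. Consequently the left wave family lives entirely in $\xi<0$ with density $\rho_L$, and the right family lives in $\xi>0$ with density $\rho_R$. Since $\lambda_{\max}$ is the smallest number such that the solution equals $\bw_L$ for $\xi\le-\lambda_{\max}$ and $\bw_R$ for $\xi\ge \lambda_{\max}$, it suffices to bound the speed of the leftmost point of the left wave by $\lambda_L^-$ and the rightmost point of the right wave by $\lambda_R^+$.

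\textbf{Step 1: monotonicity of $f_Z$ and $\phi$.} Each $f_Z$ is continuous at $p_Z$ and strictly increasing on each branch. Indeed, on the shock branch the derivative of $(p-p_Z)/\sqrt{p+p_Z}$ is $(p+3p_Z)/(2(p+p_Z)^{3/2})>0$. Hence $\phi$ is strictly increasing and continuous. Since $\phi(p)\to\infty$ as $p\to\infty$, either $\phi(0)\ge0$, which gives vacuum or $p^*=0$, or there is a unique root $p^*>0$. This justifies that the Lax-entropic solution is the one built from the curves above, and that the star states $(v^*,p^*)$ connect correctly. I will take as given the standard fact that admissible shocks are compressive, i.e.\ $p\ge p_Z$ on the shock branch, as used in deriving $f_Z$.

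\textbf{Step 2: the left wave.} If $p^*\le p_L$, the left wave is a rarefaction. Its characteristic speed $\lambda_L(p)=-\sqrt{p/(3\rho_L)}$ is monotone in $p$, so the fan spans $\xi\in[-\sqrt{p_L/(3\rho_L)},-\sqrt{p^*/(3\rho_L)}]$. The leftmost speed is therefore $-\sqrt{p_L/(3\rho_L)}=-\sqrt{2p_L/(6\rho_L)}$, which is exactly $\lambda_L^-$ when $\max(p^*,p_L)=p_L$. If instead $p^*>p_L$, the left wave is a single shock moving at speed $s=-\sqrt{(p^*+p_L)/(6\rho_L)}$, obtained from the Rankine--Hugoniot relation \eqref{RH_left_shockwave} by substituting $v-v_L=-f_L(p^*)$. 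This equals $\lambda_L^-$.

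The right wave is handled symmetrically with $\rho_R$. In the vacuum case both waves are rarefactions ending at $p=0$, and the extreme speeds are still $\mp\sqrt{p_Z/(3\rho_Z)}$.

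\textbf{Step 3: conclusion and the main obstacle.} Combining the two sides gives $\lambda_{\max}=\max(-\lambda_L^-,\lambda_R^+)$. The same formula stays a guaranteed upper bound when $p^*$ is replaced by any $\widetilde p\ge p^*$, because both expressions in \eqref{def_lambda_L_R} are nondecreasing in $p^*$. This covers the Case~3 estimate $\widetilde p^*$ and the use of the upper bound returned by Algorithm~2 of \citep{Guermond_Popov_2016_JCP}.

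The main delicacy is Step 2. One must check that the shock speed sits to the left of the post-shock characteristic, so that the shock really is the extreme wave. One must also check that the rarefaction's head speed uses $p_L$, i.e.\ that $|\lambda_L(p)|$ is maximal at $p_L$ when $p^*<p_L$. Both facts follow from the monotonicity of $\sqrt{p}$ together with the Lax condition $\sqrt{(p^*+p_L)/(6\rho_L)}\ge\sqrt{p_L/(3\rho_L)}$, which is equivalent to $p^*\ge p_L$.
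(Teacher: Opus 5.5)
Your proposal is correct and follows essentially the paper's own derivation. You classify each outer wave as a rarefaction or a shock according to $p^*$ versus $p_Z$, read off either the rarefaction head speed $\mp\sqrt{p_Z/(3\rho_Z)}$ or the Rankine--Hugoniot shock speed $\mp\sqrt{(p^*+p_Z)/(6\rho_Z)}$, and you usefully add that $\lambda_L^-$ and $\lambda_R^+$ are monotone in $p^*$, so any upper estimate of $p^*$ still gives a guaranteed bound.

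One small slip is in your last paragraph. For the left shock the Lax condition reads $\lambda_L(p^*)<s<\lambda_L(p_L)$, so the shock lies to the right of the post-shock characteristic, not the left. The inequality you quote is actually the pre-shock comparison $s\le\lambda_L(p_L)$. In any case no such check is needed to identify the extreme speed, since the solution equals $\bw_L$ on $\xi<s$.
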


\bibliographystyle{abbrvnat}
\bibliography{ref}

\end{document}


\begin{proof}
  \ref{consistecy:Thm:low_order_IDP} Let us prove consistency.

  ****

  Then, taking the dot product of the discrete momentum balance
  equation with
  $\bsfv_i^{n,\frac32}\eqq\frac12(\bsfv_i^{n,2}+\bsfv_i^{n,1})$, we
  infer that
  $\tfrac{\varrho_i^{n,2}}{2}(\|\bsfv_i^{n,2}\|_{\ell_2}^2-\|\bsfv_i^{n,1}\|_{\ell_2}^2)=-\dt\sum_{j\in\calV(i)}
    \bc_{ij}\SCAL\bsfv_i^{n,\frac32}p_{\sfr}(\sfE_{\sfr,j}^{n,1}) +
    \dt\calO(h)$, where where we replaced the graph viscosity
  contribution by $\dt \calO(h)$, where $h$ is the meshsize of the
  underlying spatial discretization. Using this expression in the
  discrete total mechanical and radiation balance equations, we obtain
  \begin{align*}
    m_i(\varrho_i^{n,2}-\varrho_i^{n,1}) ={}          & 0,                               \\
    m_i(\bsfm_i^{n,2}-\bsfm_i^{n,1}) ={}              & \textstyle
    \dt\sum_{j\in\calV(i)} -\bc_{ij}p_{\sfr}(\sfE_{\sfr,j}^{n,1}) +\dt \calO(h)
    \\
    m_i(\sfE_{\sfm,i}^{n,2}-\sfE_{\sfm,i}^{n,1}) ={}  & \textstyle\dt\sum_{j\in\calV(i)}
    -\bc_{ij}\SCAL\bsfv_i^{n,\frac32}p_{\sfr}(\sfE_{\sfr,j}^{n,1})                       \\
    m_i(\sfE_{\sfr,i}^{n,2}-\sfE_{\sfr,i}^{n,1}) = {} &
    \textstyle \dt\sum_{j\in\calV(i)} -\bc_{ij}\SCAL (\bsfv_j^{n,1}-
    \bsfv_i^{n,\frac32})p_{\sfr}(\sfE_{\sfr,j}^{n,1})
    +\dt \calO(h) 
  \end{align*}
  We now sum the above system with the first hyperbolic stage
  \eqref{def_low_flux_hyp1}. We obtain
  \begin{align*}
    m_i(\varrho_i^{n+1}-\varrho_i^{n}) ={}          & \dt\textstyle\sum_{j\in\calV(i)}-\bc_{ij}\SCAL\bsfm_j^n +\dt \calO(h),                          \\
    m_i(\bsfm_i^{n+1}-\bsfm_i^{n}) ={}              &
    \dt\textstyle\sum_{j\in\calV(i)}-(\bsfv_j^n\otimes\bsfm_j^n+ (p(\bsfu_j^n) +p_{\sfr}(\sfE_{\sfr,j}^{n,1}))\polI_d)\bc_{ij}+\dt \calO(h) ,         \\
    m_i(\sfE_{\sfm,i}^{n,2}-\sfE_{\sfm,i}^{n}) ={}  & \textstyle\dt\sum_{j\in\calV(i)}
    -\big(\bc_{ij}\SCAL\bsfv_j^n(\sfE_{\sfm,j}^{n} +p(\bsfu_j^n)) +\bc_{ij}\SCAL\bsfv_i^{n,\frac32}p_{\sfr}(\sfE_{\sfr,j}^{n,1})\big)  +\dt \calO(h), \\
    m_i(\sfE_{\sfr,i}^{n,2}-\sfE_{\sfr,i}^{n}) = {} &
    \textstyle \dt\sum_{j\in\calV(i)} -\big(\bc_{ij}\SCAL\bsfv_j^n\sfE_{\sfr,j}^{n} +\bc_{ij}\SCAL (\bsfv_j^{n,1}-
    \bsfv_i^{n,\frac32})p_{\sfr}(\sfE_{\sfr,j}^{n,1})\big)
    +\dt \calO(h)
  \end{align*}
  By virtue of the consistency properties of the matrices $\polM\upL$
  and $\polC$ assumed \S\ref{Sec:Space_approximation},\Question{(JLG) to
    be unambiguously clarified} we conclude that the discrete mass and
  momentum balance equations obtained above are indeed first-order consistent
  approximation of mass and momentum balance equations in
  \eqref{eq:radiation}.

  The discrete balance equation for the total
  mechanical energy is obtained by adding \eqref{EM:parabolic_update} to
  the discrete balance equation obtained above. Since
  $\varrho^{n+1}\eqq \varrho^{n,2}$ and $\bsfm^{n+1}\eqq \bsfm^{n,2}$,
  and we have assumed $\bal{\sfe=\cv(\varrho)\sfT}$,\Question{(JLG) essential
    to ensure exact energy conservation.}
  we have
  $\sfE_{\sfm,i}^{n+1}-\sfE_{\sfm,i}^{n,2}=\varrho_i^{n,2}(\sfe_i^{n+1}-\sfe_i^{n,2})=\varrho_i^{n,2}\cv^{n,2}(\sfT_i^{n+1}-\sfT_i^{n,2})$. This
  in turn implies that
  \begin{multline*}
    m_i(\sfE_{\sfm,i}^{n+1}-\sfE_{\sfm,i}^{n}) =\textstyle\dt\sum_{j\in\calV(i)}-\big(
    \bc_{ij}\SCAL\bsfv_j^n(\sfE_{\sfm,j}^{n} +p(\bsfu_j^n))
    +\bc_{ij}\SCAL\bsfv_i^{n,\frac32}p_{\sfr}(\sfE_{\sfr,j}^{n,1}) \big)\\
    - \dt m_i\sigmaa^{n,2}(\sfT^*_i) c (a\lorr[\sfT_i^{n,2}]^3\sfT_i^{n+1} -
    \sfE_{\sfr,i}^{n+1}) +\dt \calO(h).
  \end{multline*}
  The above discrete balance equation is a first-order consistent
  approximation of the balance equation for the total mechanical energy
  in \eqref{eq:radiation}.

  We proceed similarly for the radiation energy equation, and we obtain
  \begin{multline*}
    m_i(\sfE_{\sfr,i}^{n+1}-\sfE_{\sfr,i}^{n}) + \dt\sum_{j\in\calV(i)}
    \sfk_{ij}^{n,2}\sfE_{\sfr,j}^{n+1} = \dt m_i\sigmaa^{n,2}(\sfT^*_i) c (a\lorr[\sfT_i^{n,2}]^3\sfT_i^{n+1} -
    \sfE_{\sfr,i}^{n+1})\\
    + \textstyle \dt\sum_{j\in\calV(i)} -\big(\bc_{ij}\SCAL\bsfv_j^n\sfE_{\sfr,j}^{n} +\bc_{ij}\SCAL (\bsfv_j^{n,1}-
    \bsfv_i^{n,\frac32})p_{\sfr}(\sfE_{\sfr,j}^{n,1})\big)
    +\dt \calO(h).
  \end{multline*}
  After observing that
  $\sum_{j\in\calV(i)}-\bc_{ij}\SCAL(\bsfv_j^{n,1}-\bsfv_i^{n,\frac32})p_{\sfr}(\sfE_{\sfr,j}^{n,1})$
  is a consistent approximation of $\int_\Dom -\varphi_i
    p_{\sfr}(E_{\sfr})\DIV \bv \diff x$, we
  conclude that the above discrete balance equation is a consistent
  approximation of the balance equation for the radiation energy in
  \eqref{eq:radiation}.

  \ref{conservation:Thm:low_order_IDP}
  The mass and conservation equations can be rewritten
  \begin{align*}
    m_i(\varrho_i^{n+1}-\varrho_i^{n}) & =
    \dt\sum_{j\in\calV(i)}\!\big(-\bc_{ij}\SCAL(\bsfm_j^n+ \bsfm_i^n)
    +d^{\textup{L},n,1}_{ij}(\varrho_j^{n}-\varrho_i^{n})+d^{\textup{L},n,2}_{ij}(\varrho_j^{n,1}-\varrho_i^{n,1})\big),                                                             \\
    m_i(\bsfm_i^{n+1}-\bsfm_i^{n})     & =\dt\sum_{j\in\calV(i)} \Big( d^{\textup{L},n,1}_{ij}(\varrho_j^{n}-\varrho_i^{n})+d^{\textup{L},n,2}_{ij}(\varrho_j^{n,1}-\varrho_i^{n,1}) \\
                                       & \hspace{-10pt} -(\bsfv_j^n\otimes\bsfm_j^n+ \bsfv_i^n\otimes\bsfm_i^n
    + \big(p(\bsfu_j^n)+p(\bsfu_i^n) +p_{\sfr}(\sfE_{\sfr,i}^{n,1})+p_{\sfr}(\sfE_{\sfr,j}^{n,1}))\polI_d\big)\bc_{ij}\Big),
  \end{align*}
  where we used that  $\sum_{j\in\calV(i)} \bc_{ij}=\bzero$. As we also
  assumed that $\bc_{ij}=-\bc_{ij}$, and by construction
  $d^{\textup{L},n,1}_{ij}=d^{\textup{L},n,1}_{ji}$,
  $d^{\textup{L},n,2}_{ij}=d^{\textup{L},n,2}_{ji}$, we conclude that
  $\sum_{j\in\calV(i)}m_i\varrho_i^{n+1}=\sum_{j\in\calV(i)}\varrho_i^{n}$ and
  $\sum_{j\in\calV(i)}m_i\bsfm_i^{n+1}=\sum_{j\in\calV(i)}m_i
    \bsfm_i^{n}$. This proves the conservation of the mass and the momentum.

  Adding the balance equations for the total mechanical energy and the
  radiation energy in \eqref{low_udate_hyp1},
  we obtain
  \begin{multline*}
    m_i(\sfE_{\textup{tot},i}^{n+1}-\sfE_{\textup{tot},i}^{n}) =  \textstyle \dt\sum_{j\in\calV(i)} \big(-\bc_{ij}\SCAL \bsfv_j^{n,1}
    p_{\sfr}(\sfE_{\sfr,j}^{n,1}) + d^{\textup{L},n,2}_{ij}(\sfE_{\sft,j}^{n,1}-\sfE_{\sft,i}^{n,1})\big).
  \end{multline*}
  \ref{idp:Thm:low_order_IDP}

\end{proof}